\newtheorem{e-proposition}[theorem]{Proposition}
\newtheorem{e-definition}[theorem]{Definition\rm}
\newtheorem{theoreme}{Th\'eor\`eme}[section]
\newtheorem{lemme}[theoreme]{Lemme}
\newtheorem{proposition}[theoreme]{Proposition}
\newtheorem{corollaire}[theoreme]{Corollaire}
\newtheorem{remarque}{\it Remarque}
\newcommand{\bm}[1]{\mbox{\boldmath{$#1$}}}
\newcommand{\DD}[0]{\ensuremath{\mathbb{D}}}
\newcommand{\EE}[0]{\ensuremath{\mathbb{E}}}
\newcommand{\RR}[0]{\ensuremath{\mathbb{R}}}
\newcommand{\AF}[0]{\ensuremath{\mathbb{A}}}
\newcommand{\ZZ}[0]{\ensuremath{\mathbb{Z}}}
\newcommand{\QQ}[0]{\ensuremath{\mathbb{Q}}}
\newcommand{\NC}[0]{\ensuremath{\mathcal{N}}}
\newcommand{\KK}[0]{\ensuremath{\mathbf{k}}}
\newcommand{\CO}[0]{\ensuremath{\mathcal{C}}}
\newcommand{\EN}[1]{\ensuremath{\operatorname{\Gamma^{\star}(#1)}}}
\newcommand{\spec}[0]{\ensuremath{\operatorname{Spec}}}
\newcommand{\B}[2]{\ensuremath{\operatorname{S(#1,h_{#2})}}}
\newcommand{\Bo}[0]{\ensuremath{\operatorname{\mathcal{B}}}}
\newcommand{\Dl}[0]{\ensuremath{\operatorname{D}}}
\newcommand{\Bl}[0]{\ensuremath{\operatorname{B}}}
\newcommand{\N}[0]{\ensuremath{\operatorname{N}}}
\newcommand{\M}[0]{\ensuremath{\operatorname{M}}}
\newcommand{\pp}[0]{\ensuremath{\operatorname{PPr}}}
\newcommand{\fhq}[0]{\ensuremath{\operatorname{h_q}}}
\newcommand{\hq}[2]{\ensuremath{\operatorname{h_q}( #1,#2)}}
\newcommand{\GD}[1]{\ensuremath{\operatorname{\Gamma^{\star}(#1)_{\mathcal{G}}}}}
\newcommand{\SD}[0]{\ensuremath{\operatorname{S_{\mathcal{G}}}}}
\newcommand{\pgcd}[2]{\ensuremath{\operatorname{p.g.c.d(#1,#2)}}}
\newcommand{\FI}[0]{\ensuremath{\operatorname{FI}}}
\newcommand{\sing}[0]{\ensuremath{\operatorname{Sing}}}
\newcommand{\Ess}[0]{\ensuremath{\operatorname{Ess}}}
\newcommand{\Hom}[0]{\ensuremath{\operatorname{Hom}}}
\newcommand{\ord}[0]{\ensuremath{\operatorname{Ord}}}
\newcommand{\ordt}[1]{\ensuremath{\operatorname{Ord}_{t} #1}}
\newcommand{\degt}[1]{\ensuremath{\operatorname{Deg}_{t} #1}}
\newcommand{\p}[0]{\ensuremath{\operatorname{p}}}
\newcommand{\pinf}[0]{\ensuremath{\operatorname{p}_{\infty}}}
\newcommand{\f}[0]{\ensuremath{\operatorname{f}}}
\newcommand{\e}[0]{\ensuremath{\operatorname{e}}}
\newcommand{\g}[0]{\ensuremath{\operatorname{g}}}
\newcommand{\m}[0]{\ensuremath{\operatorname{m}}}
\newcommand{\E}[0]{\ensuremath{\operatorname{E}}}
\newcommand{\D}[0]{\ensuremath{\operatorname{D}}}
\newcommand{\Div}[1]{\ensuremath{\operatorname{Div(#1)}}}
\newcommand{\sch}{\mathcal{S}ch}
\newcommand{\ens}{\mathcal{E}ns}
\newcommand{\com}[1]{\ensuremath{#1^{\star}}}
\newcommand{\vspde}{\vspace{0.2cm}}
\begin{document}
\selectlanguage{francais}
\title[Une famille d'hypersurfaces  avec application de Nash bijective]
{R\'esolution du probl\`eme des arcs de Nash pour une famille d'hypersurfaces  quasi-rationnelles}

\author{Maximiliano LEYTON-ALVAREZ}
\address{Universit\'e
Grenoble I, Institut Fourier, UMR 5582 CNRS-UJF, BP 74, 38402
St.\ Martin d'H\`eres c\'edex, France}

\email{leyton@ujf-grenoble.fr}

\date{\today}

\thanks{}

\begin{abstract}
\selectlanguage{francais}
 Le probl\`eme des arcs de Nash pour les singularit\'es normales de surfaces affirme qu'il y aurait autant de familles d'arcs sur un germe de surface singulier $(S,O)$ que de diviseurs essentiels sur $(S,O)$. Il est connu que ce probl\`eme se r\'eduit \`a \'etudier les singularit\'es quasi-rationnelles. L'objet de cet article est de r\'epondre positivement au probl\`eme de Nash pour une famille d'hypersurfaces  quasi-rationnelles non rationnelles. On applique la m\^eme m\'ethode pour r\'epondre positivement \`a ce probl\`eme dans les cas de singularit\'es de type $\EE_6$ et $\EE_7$  et pour fournir une nouvelle preuve dans le cas de singularit\'es de type  $\DD_n$, $n\geq 4$. 

\vskip 0.5\baselineskip

\selectlanguage{english}
\noindent{A{\tiny BSTRACT}.}
\noindent {\bf  A solution to the Nash problem on arcs for a family of  quasi-rational hypersurfaces.} The Nash problem on arcs for normal surface singularities states that there are as many arc families on a germ $(S,O)$ of a singular surface as there are essential divisors over $(S,O)$.  It is  known that this problem can be reduced to the study of quasi-rational singularities.  In this paper we give a positive answer to the Nash problem for a family of non-rational quasi-rational hypersurfaces. The same method is applied to answer positively to this problem in the case of $\EE_6$ and $\EE_7$ type singularities, and to provide new proof in the case of $\DD_n$, $n\geq 4$, type singularities.  

\end{abstract}
\maketitle

\selectlanguage{francais}
\section{Introduction}
\label{sec:int}

Soient $\KK$ un corps alg\'ebriquement clos de caract\'eristique nulle,  $V$ une vari\'et\'e alg\'ebrique normale sur $\KK$  et   $\pi:X\rightarrow V$  une d\'esingularisation divisorielle de $V$, c'est-\`a-dire $\pi$ est une d\'esingularisation ($\pi$ est un morphisme propre et birationnel tel que le morphisme  $\pi: X\backslash \pi^{-1}(\sing V)\rightarrow V\backslash \sing V$ est un isomorphisme, o\`u $\sing V$ est le lieu singulier de $V$ et $X$ est lisse) telle que  les composantes irr\'eductibles de  la fibre exceptionnelle $\pi^{-1}(\sing V)$ sont de codimension $1$ dans $X$. On sait, d'apr\`es le th\'eor\`eme de r\'esolution des singularit\'es  d'Hironaka, que  cette d\'esingularisation existe. 

Si $\pi':X'\rightarrow V$ est une autre d\'esingularisation de $V$, alors $(\pi')^{-1}\circ\pi:X\dashrightarrow X'$ est une application birationnelle. Soit $\E$ une composante irr\'eductible de la fibre exceptionnelle de $\pi$. Comme $\E$ est un diviseur et $X$ est une vari\'et\'e alg\'ebrique normale ($X$ est une vari\'et\'e lisse), il existe un ouvert $\E^0$ de $\E$  sur lequel l'application birationnelle $(\pi')^{-1}\circ\pi$ est bien d\'efinie.    
Le diviseur $\E$ est appel\'e  {\it diviseur essentiel  sur} $V$ si pour toute d\'esingularisation 
 $\pi':X'\rightarrow V$ de $V$  l'adh\'erence de  $(\pi')^{-1}\circ\pi(E^0)$ dans $X'$ est une composante irr\'eductible de la fibre exceptionnelle du morphisme  $\pi'$. On note $\Ess(V)$ l'ensemble de diviseurs essentiels sur $V$.

On remarque que, si $V$ est une surface alg\'ebrique normale  sur $\KK$, les diviseurs essentiels sur $V$ sont les composantes irr\'eductibles de la fibre exceptionnelle de la r\'esolution minimale de $V$.\\

Soit $K$ un corps d'extension de $\KK$. Un morphisme $\spec K[t]/(t^{m+1}) \rightarrow V$  (resp. $\spec K[[t]]\rightarrow V$) est appel\'e $(K,m)$-jet (resp. $K$-arc).\\  

Soit $\sch /\KK$  la cat\'egorie des sch\'emas sur $\KK$ et  $\ens$ la cat\'egorie des ensembles. 
On consid\`ere le  foncteur  contravariant suivant:
\begin{center}
$F_m:\sch/\KK \longrightarrow \ens$, $Y \mapsto \Hom(Y\times_{\KK}\spec \KK[t]/(t^{m+1}),V).$
\end{center}

Ce  foncteur est repr\'esentable de forme canonique par un sch\'ema $V_m$ de type fini sur $\KK$, c'est-\`a-dire on a:
\begin{center}
$\Hom(Y\times_{\KK} \spec \KK[t]/(t^{m+1}),V)\cong \Hom(Y,V_{m}),$
\end{center}
o\`u $Y$ est un sch\'ema  quelconque sur $\KK$. Le sch\'ema $V_m$  est appel\'e  {\it l'espace de m-jet sur} $V$.

L'homomorphisme surjectif canonique $\KK[t]/(t^{m+1})\rightarrow \KK[t]/(t^{m})$ induit un morphisme affine  $\p_m: V_m\rightarrow V_{m-1}$.  Les morphismes $\p_{m\;n}:V_m\rightarrow V_n$, o\`u $n<m$ et $\p_{m\;n}:=\p_{n+1}\circ\cdots\circ \p_m$, forment un syst\`eme projectif. Comme les morphismes $\p_m: V_m\rightarrow V_{m-1}$ sont affines (voir \cite{Ish07} ou \cite{EiMu09}), la limite projective existe. On note $V_{\infty}$ cette limite projective, c'est-\`a-dire   $V_{\infty}:= \displaystyle \lim_{\leftarrow m} V_m$.   La limite projective $V_{\infty}$ est un sch\'ema qui n'est pas en g\'en\'eral de type fini sur $\KK$. Le sch\'ema $V_{\infty}$ est appel\'e {\it l'espace d'arcs sur } $V$.\\

 L'espace d'arcs $V_{\infty}$ a {\it la propri\'et\'e fonctorielle} suivante (voir  \cite{IsKo03}):\
Le foncteur $Y\rightarrow \Hom(Y\widehat{\times}_{\KK}\spec \KK[[t]], V)$, 
 o\`u $Y$ est un sch\'ema quelconque  sur $\KK$ et $Y\widehat{\times}_{\KK}\spec \KK[[t]]$ est le compl\'et\'e formel  du sch\'ema $ Y\times_{\KK}\spec \KK[[t]] $  le long du sous-sch\'ema  $ Y\times_{\KK}\spec{\KK} $,  est repr\'esentable par le sch\'ema $V_{\infty}$.\\

D'apr\`es la  propri\'et\'e  fonctorielle de l'espace d'arcs $V_{\infty}$,  les $K$-points de $V_{\infty}$ sont en correspondance bijective avec les   $K$-arcs sur $V$. Par abus de notation, pour $\alpha\in V_{\infty}$, on note $\alpha$ son $\KK_{\alpha}$-arc correspondant, o\`u  $\KK_{\alpha}$ est le corps r\'esiduel du point $\alpha$.\\

Soit $\pinf:V_{\infty}\rightarrow V$ la projection canonique  $\alpha\mapsto \alpha(0)$, o\`u $0$ est le point ferm\'e de $\spec \KK_{\alpha}[[t]]$. On note $V_{\infty}^{s}:= \pinf^{-1}(\sing V)$, o\`u $\sing V$ est le lieu singulier de $V$, et $\mathcal{CN}(V)$ l'ensemble de composantes irr\'eductibles de  $V_{\infty}^{s}$.\\

 Nash a d\'emontr\'e  que l'application suivante est bien d\'efinie et injective (voir \cite{Nas95}): 
\begin{center}
$\mathcal{N}_{V}:\mathcal{CN}(V)\rightarrow \Ess(V), C\mapsto \overline{\{\widehat{\alpha}(0)\}}$, 
\end{center}
o\`u  le $\KK_{\alpha}$-arc $\alpha$ est le point g\'en\'erique de $C\in \mathcal{CN}(V)$, le $\KK_{\alpha}$-arc $\widehat{\alpha}$ est le rel\`evement \`a $X$ du $\KK_{\alpha}$-arc $\alpha$, c'est-\`a-dire $\widehat{\alpha}$ est l'unique morphisme tel que $\pi\circ\widehat{\alpha}=\alpha$, et   $\overline{\{\widehat{\alpha}(0)\}}$ l'adh\'erence  du point $\widehat{\alpha}(0)$. Cette application est appel\'ee {\it l'application de Nash associ\'ee \`a} $V$.\\

 Le probl\`eme de Nash consiste \`a \'etudier la surjectivit\'e  de l'application de Nash  $\mathcal{N}_{V}$.\\

  Dans plusieurs cas la  surjectivit\'e de cette application a \'et\'e prouv\'ee. Par exemple  pour les singularit\'es $\AF_n$ (\cite{Nas95}), les singularit\'es $\DD_n$ (\cite{Ple08}), les surfaces sandwichs (\cite{LeRe99}), les vari\'et\'es toriques (\cite{IsKo03}), les hypersurfaces quasi-ordinaires  (\cite{Gon07}), les vari\'et\'es toriques stables (\cite{Pet09}),  et  d'autres cas qu'on peut trouver dans les articles suivants (\cite{Ish05}, \cite{Ish06}, \cite{Gon07}, \cite{Gole97}, \cite{Lej80},\cite{Mor08}, \cite{PlPo06}, \cite{PlPo08}, \cite{Reg95}).\\

Dans l'article \cite{IsKo03}  Ishii et Kollar montrent que l'application de Nash associ\'ee \`a l'hypersurface de $\AF_{\KK}^5$ donn\'ee par l'\'equation $x_1^3+x_2^3+x_3^3+x_4^3+x_5^6=0$ n'est pas surjective. Or, en dimension deux ou trois il n'y a pas d'exemple publi\'e o\`u l'application de Nash  n'est pas surjective.\\

Soit $\E$ un diviseur essentiel sur $V$ ($\E\in \Ess(V)$). On note $N_{\E}$ l'adh\'erence dans  $V_{\infty}$ de  l'ensemble suivant:
\begin{center}
$\{\alpha\in V_{\infty}\backslash (\sing V)_{\infty}\mid \widehat{\alpha}(0)\in E\}$,
\end{center}
 o\`u $(\sing V)_{\infty}$ est le sous-ensemble ferm\'e de $V_{\infty}$ des arcs qui sont concentr\'es en $\sing V$.
 L'ensemble $N_{\E}$ est irr\'eductible et on a:
\begin{center} 
$V_{\infty}^{s}=\bigcup_{\E}N_{\E}$,

 \end{center}
voir \cite{Reg06}. On note $\alpha_{\E}$ le point g\'en\'erique de $N_{\E}$.\\

Un morphisme $\omega:\spec K[[s,t]]\rightarrow V$ est appel\'e $K$-{\it wedge  sur} $V$. D'apr\`es la propri\'et\'e fonctorielle de l'espace d'arcs $V_{\infty}$,  les $K$-wedges sont en correspondance bijective avec les   $K[[s]]$-points de $V_{\infty}$. L'image du point ferm\'e (resp. du point g\'en\'erique) de $\spec K[[s]]$ dans $V_{\infty}$ est appel\'e le centre (resp. l'arc g\'en\'erique)  du $K$-wedge $\omega$.\\
 
Un $K$-wedge $\omega$ est appel\'e  {\it $K$-wedge admissible centr\'e  en} $N_{\E}$  si le centre (resp. l'arc g\'en\'erique)  de $\omega$  est le point g\'en\'erique de $N_{\E}$ (resp. appartient \`a $V_{\infty}^{s}$). Dans ce cas, par d\'efinition, le corps  $K$ est forc\'ement un corps d'extension du corps r\'esiduel $\KK_{\alpha_{\E}}$ du point $\alpha_{\E}$.
Le $K$-wedge $\omega$ peut \^etre interpr\'et\'e comme une d\'eformation \`a un param\`etre des coefficients du comorphisme $\alpha_{\E}^{\star}$ de l'arc $\alpha_{\E}$.

 Dans  \cite{Reg06} (dans le cas de surfaces voir \cite{Lej80}) l'auteur  montre que  $\E$ appartient \`a l'image de l'application de Nash $\mathcal{N}_V$  si et seulement si  tout $K$-wedge admissible $\omega$ centr\'e en $N_{\E}$ se rel\`eve \`a $X$, o\`u $K$ est un corps d'extension quelconque du corps $K_{\alpha_{\E}}$,  c'est-\`a-dire  s'il existe un $K$-wedge $\widehat{\omega}$ sur $X$ tel que $\pi\circ \ \widehat{\omega}=\omega$.\\

On consid\`ere l'hypersurface  $\B{p}{q}$ de $\AF_{\KK}^3$ donn\'ee par l'\'equation $z^p+\hq{x}{y}=0$, o\`u $\fhq$ est un polyn\^ome homog\`ene de degr\'e $q$  sans facteurs multiples, $p\geq 2$, $q\geq 2$ deux entiers  premiers entre eux. Par exemple si $ 
 \hq{x}{y}=x^q+y^q$, $\B{p}{q}$ est une surface de {\it Pham-Brieskorn}. Les surfaces $\B{p}{q}$  sont toutes quasi-rationnelles  (voir \cite{FlZa03}) et  rationnelles si et seulement si $q=2$ ou $(p,q)=(2,3)$. On remarque que  $\B{p}{2}$ est une singularit\'e du type $\AF_{p-1}$.  Le r\'esultat principal de cet article  est le th\'eor\`eme suivant.

\begin{theoreme}
\label{th:nashBP} Pour tous les entiers $p\geq 2$, $q\geq 2$ premiers entre eux, l'application de Nash $\mathcal{N}_{\B{p}{q}}$ associ\'ee \`a $\B{p}{q}$ est bijective.
\end{theoreme}
Une note sur la  preuve de ce r\'esultat a \'et\'e publi\'ee dans \cite{Ley11}.\\   

On remarque que les crit\`eres de \cite{Mor08} et \cite{PlPo06} ne s'appliquent pas en g\'en\'eral aux familles $\B{p}{q}$.\\

En utilisant la m\^eme m\'ethode que dans la preuve du Th\'eor\`eme \ref{th:nashBP} on obtient le r\'esultat suivant:

\begin{theoreme} 

\label{th:nashDP} Si $S$ est une singularit\'e du type $\EE_{6}$ ou $\EE_{7}$,  l'application  de Nash $\mathcal{N}_{S}$ associ\'ee \`a $S$ est bijective.
\end{theoreme}
 On obtient aussi une preuve simple de la bijectivit\'e  de l'application de Nash pour plusieurs cas connus, par exemple $\DD_{n}$, o\`u $n$ est un entier, $n\geq 4$ (voir \cite{Ple08}).\\

R\'ecemment  sur ArXiv une preuve  de la bijectivit\'e de l'application de Nash pour la singularit\'e de type $\EE_6$ (resp. pour les surfaces quotients) a \'et\'e publi\'ee,  voir \cite{PlSp10} (resp. voir  \cite{Pe10})). Mais les m\'ethodes de d\'emonstration sont diff\'erentes de celle que nous utilisons.\\

Cet article est organis\'e de la fa\c con suivante: dans premi\`ere section on prouve  quelques r\'esultats qui vont \^etre utilis\'es dans toutes les sections de cet article et on d\'emontre le Th\'eor\`eme \ref{th:nashBP}; le Th\'eor\`eme \ref{th:nashDP} est le sujet de la deuxi\`eme section, on donne  la preuve du cas $\EE_6$ en d\'etails  et un r\'esum\'e pour le cas $\EE_7$;  dans la derni\`ere section on donne une preuve simple de la bijectivit\'e  de l'application de Nash  pour les singularit\'es du type $\DD_n$, $n\geq 4$  diff\'erente de celle de l'article  \cite{Ple08}.

\section*{Remerciements} Je tiens \`a remercier  G\'erard Gonzalez-Sprinberg pour les nombreuses discussions et son constant encouragement. Je remercie \'egalement Marcel Morales pour l'int\'er\^et qu'il a port\'e \`a mon travail. 

\section{Le probl\`eme Nash pour les hypersurfaces quasi-rationelles $\B{p}{q}$}

On rappelle que $\B{p}{q}$ est l'hypersurface de $\AF_{\KK}^3$ donn\'ee par l'\'equation $z^p+\hq{x}{y}=0$, o\`u $\fhq$ est un polyn\^ome homog\`ene de degr\'e $q$  sans facteurs multiples et $p\geq 2$, $q\geq 2$ sont deux entiers  premiers entre eux.\\

Nash a d\'emontr\'e que l'application  $\mathcal{N}_{\AF_{m}}$ associ\'ee \`a  la surface de type $\AF_{m}$, $m\geq 2$, est bijective (voir \cite{Nas95}). On suppose donc que $q\geq 3$, car l'hypersurface $\B{p}{2}$ est une singularit\'e du type  $\AF_{p-1}$.\\

\subsection{D\'esingularisation des hypersurfaces $\bm{\B{p}{q}}$}

Un syst\`eme de coordonn\'ees affines $\{x,y,z\}$ de $\AF_{\KK}^3$ \'etant fix\'e, notons $ O $ l'origine de $\AF_{\KK}^3$. On d\'ecrit la d\'esingularisation de $\B{p}{q}$ en utilisant les constellations toriques de points infiniment voisins de $O$ (voir \cite{CGL96}), les \'eventails de Newton (voir \cite{GoLe91}) et les $G$-{\it d\'esingularisations} (voir \cite{BoGo95}).\\
 
Soient $\N=\ZZ^3$ muni de sa base standard $\{\e_1, \e_2,\e_3\}$ et $\M=\Hom_{\ZZ}(N,\ZZ)$ le dual de $\N$. On note $\{\e_1^{\star}, \e_2^{\star},\e_3^{\star}\}$ la base duale de $\{\e_1, \e_2,\e_3\}$.
 On identifie la $\KK$-alg\`ebre $\KK[\M]$ avec la $\KK$-alg\`ebre $\KK[x,x^{-1},y,y^{-1},z,z^{-1}]$ par l'isomorphisme qui envoie  le caract\`ere  $\chi^{\e_1^{\star}}$ (resp. $\chi^{\e_2^{\star}}$, $\chi^{\e_3^{\star}}$) sur $x$ (resp. $y$, $z$).

Soit $\Sigma$ un \'eventail en $\N_{\RR}:=\N\otimes_{\ZZ}\RR$. On note $X(\Sigma)$ la vari\'et\'e torique associ\'ee  \`a l'\'eventail $\Sigma$ et munie de l'action du tore alg\'ebrique $(\KK^{\star})^3=\spec\KK[x,x^{-1},y,y^{-1},z,z^{-1}]$.

Soit  $X_0= \AF_{\KK}^3$.  Une constellation torique de points infiniment voisins de $O$ est un ensemble fini de points $\CO=\{Q_0=O,Q_1,...,Q_{m}\}$, o\`u chaque $Q_{i}$, $1\leq i\leq m$, est une orbite de dimension $0$ de la vari\'et\'e torique $X_i$
obtenue par l'\'eclatement $\varsigma_i:X_{i}\rightarrow X_{i-1}$ de centre $Q_{i-1}$. On note $X(\CO)=X_{m+1}$.

On dit que $Q_j$ se projette sur $Q_i$, not\'e $Q_j\geq Q_i$, si le point $Q_j\in X_j$  est obtenu \`a partir de $Q_i\in X_i$ par une suite d'\'eclatements de points. La relation $\geq$  est une relation d'ordre partiel sur les points de $\CO$. Si $\geq$ est un ordre total, on dit que $\CO$ est une constellation en cha\^ine.\\

Supposons que $\CO$ est une constellation torique en cha\^ine  de points infiniment voisins de $O$. Soient $\Bo:=\{\e_1,\e_2,\e_3\}$  la base ordonn\'ee de $\N$ et  $\Delta:=\langle \Bo\rangle$ le c\^one r\'egulier engendr\'e par $\Bo$. 
Notons $\Sigma_i$ l'\'eventail associ\'e \`a la vari\'et\'e torique $X_i$. L'\'eventail $\Sigma_1$ est obtenu par  la subdivision \'el\'ementaire de $\Delta$ centr\'ee en $u=e_1+e_2+e_3$. Pour chaque entier $j$, $1\leq j\leq 3$, soit $\Bo_j$ la base ordonn\'ee de $N$
obtenue en rempla\c cant $\e_j$ par $u$ en la base $\Bo$. Soit $\Delta_j:=\langle \Bo_j\rangle$ le c\^one r\'egulier engendr\'e par $\Bo_j$, pour $1\leq j \leq 3$. Le choix du point $Q_1>Q_0$  \'equivaut \`a choisir un entier $a_1$, $1\leq a_1\leq 3$, qui  d\'etermine 
un c\^one $\Delta_{a_1}$ de l'\'eventail $\Sigma_1$. La subdivision $\Sigma_2$ de $\Sigma_1$ est obtenue en rempla\c cant $\Delta_{a_1}$ en $\Sigma_1$ par les c\^ones   $\Delta_{a_1j}:=\langle \Bo_{a_1j}\rangle$, o\`u $\Bo_{a_1j}$ est la base ordonn\'ee de $\N$ obtenue en   rempla\c cant le $j$-i\`eme vecteur de $\Bo_{a_1}$ par $\sum_{u\in \Bo_{a_1}}u$. Le choix du point $Q_2>Q_1$  \'equivaut \`a choisir un entier $a_2$, $1\leq a_2\leq 3$, qui d\'etermine  un c\^one $\Delta_{a_1a_2}$ de l'\'eventail $\Sigma_2$. Par r\'ecurrence, on obtient une codification de la constellation en cha\^ine $\CO$. Cette codification est not\'ee $Q_j=Q_0(a_1a_2\cdots a_j)$ pour $1\leq j\leq m$.

Soit   $b_1,b_2,\cdots,b_k$  une suite d'entiers, o\`u  $1\leq b_i\leq 3$, $1\leq i \leq k \leq m$, et telle que le c\^one $\Delta_{b_1b_2\cdots b_k}$ appartient \`a l'\'eventail $\Sigma_{m+1}$. Notons $U_{ b_1b_2\cdots b_k}$ l'ouvert torique affine qui est en correspondance avec le c\^one $\Delta_{b_1b_2\cdots b_k}$.

Soit $\CO_{n}=\{Q_0=O,Q_1,...,Q_{n-1}\}$, $n\geq 1$, une  cha\^ine torique  de points infiniment voisins de $O$ donn\'ee par la codification $Q_j=Q_0(3^j)$ ($j$ fois l'entier $3$) pour $1\leq j\leq n-1$.  On note $\sigma_{n}:X(\CO_n)\rightarrow \AF_{\KK}^3$ le morphisme torique induit par $\CO_n$  et $S_{\mathcal C}$ le transform\'e strict de  $\B{p}{q}$.

\begin{proposition}
\label{pr:const}
 Soient $p>q$ et $p=nq+r$ la division enti\`ere, $1\leq r< q$. Si $r=1$, alors  $S_{\mathcal C}$ est la r\'esolution minimale de $\B{p}{q}$ et la fibre exceptionnelle  $\sigma_{n}^{-1}(O)\cap S_{\mathcal C}$  est la r\'eunion de $nq$ courbes rationnelles.
Si $r> 1$, alors  $S_{\mathcal C}$ a un unique point singulier $s$, et de plus,  $(S_{\mathcal C},s)$ est isomorphe au germe  $(\B{r}{q},O)$.  
 \end{proposition}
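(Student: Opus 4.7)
Le plan est de proc\'eder par r\'ecurrence sur $n$, en m'appuyant, tant pour le cas de base $n = 1$ que pour l'\'etape d'h\'er\'edit\'e, sur un calcul-cl\'e d\'ecrivant l'effet d'un seul \'eclatement torique de $O\in \AF_{\KK}^3$ dans la troisi\`eme direction (l'\'eclatement $\varsigma_1$ centr\'e en $Q_0$) sur la surface $\B{p}{q}$, pour $p > q$.

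Dans la carte affine $U_{\Delta_3}$ associ\'ee \`a $\Delta_3 = \langle e_1, e_2, u_1\rangle$, o\`u la param\'etrisation est $x = x_1 z_1$, $y = y_1 z_1$, $z = z_1$, l'image inverse de $z^p + \hq{x}{y}$ se factorise en $z_1^{q}(z_1^{p-q} + \hq{x_1}{y_1})$. Le transform\'e strict y est donc $z_1^{p-q} + \hq{x_1}{y_1} = 0$, ce qui r\'ealise pr\'ecis\'ement le germe $(\B{p-q}{q}, O)$ \`a l'origine de $U_{\Delta_3}$. Dans les deux autres cartes $U_{\Delta_1}$ et $U_{\Delta_2}$, des calculs sym\'etriques donnent pour le transform\'e strict des \'equations du type $X^{p-q} Z^p + \hq{1}{Y} = 0$; j'appliquerais le crit\`ere jacobien --- en exploitant l'hypoth\`ese que $\fhq$ est sans facteurs multiples, qui interdit l'annulation simultan\'ee de $\hq{1}{Y}$ et de sa d\'eriv\'ee $\partial_Y \hq{1}{Y}$ --- pour conclure que le transform\'e strict est lisse dans $U_{\Delta_1}\cup U_{\Delta_2}$. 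Enfin, en posant $z_1 = 0$ dans l'\'equation de $U_{\Delta_3}$, la factorisation de $\fhq$ en $q$ formes lin\'eaires distinctes montre que $D_1 \cap \tilde{S}$ est r\'eunion de $q$ courbes rationnelles toutes concourantes \`a l'origine de $U_{\Delta_3}$, c'est-\`a-dire au point $Q_1$ qu'on s'appr\^ete \`a \'eclater.

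Muni de cette brique, la r\'ecurrence est imm\'ediate. Cas de base $n = 1$: avec $p = q + r$, $1 \leq r < q$, la carte $U_{\Delta_3}$ donne le germe $(\B{r}{q}, O)$, lisse si $r = 1$ (auquel cas la fibre exceptionnelle est r\'eunion de $q = nq$ courbes rationnelles) et singulier isomorphe \`a $(\B{r}{q}, O)$ si $r > 1$. \'Etape d'h\'er\'edit\'e: pour $p = (n+1)q + r$, le premier \'eclatement place au point $Q_1$ un germe $(\B{nq + r}{q}, O)$, et les $n$ \'eclatements restants $Q_1, \ldots, Q_n$ de $\mathcal{C}_{n+1}$ forment exactement la cha\^ine $\mathcal{C}_n$ relative \`a $Q_1$, si bien que l'hypoth\`ese de r\'ecurrence s'y applique et conclut. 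Si $r > 1$, l'unique point singulier survit \`a l'extr\'emit\'e de la cha\^ine avec germe $(\B{r}{q}, O)$. Si $r = 1$, le transform\'e strict final est lisse, sa fibre exceptionnelle comportant les $nq$ nouvelles courbes rationnelles donn\'ees par l'hypoth\`ese de r\'ecurrence ainsi que les $q$ transform\'es stricts des composantes de $D_1\cap \tilde S$ --- qui restent irr\'eductibles et rationnelles car elles ne se rencontrent qu'aux points singuliers successivement \'eclat\'es ---, soit un total de $(n+1)q$ courbes rationnelles.

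La principale difficult\'e se trouve dans le cas $r = 1$: il s'agit de justifier que $S_{\mathcal{C}}$ est bien la r\'esolution \emph{minimale} (et non seulement une r\'esolution lisse) et que la fibre exceptionnelle se d\'ecompose en exactement $nq$ composantes irr\'eductibles rationnelles. Pour cela, je ferais un travail explicite dans les cartes toriques interm\'ediaires $\Delta_{3^k 1}$ et $\Delta_{3^k 2}$, o\`u l'\'equation du transform\'e strict prend la forme $A^{p-(k+1)q}C^{p-kq} + \hq{1}{B} = 0$, pour suivre les intersections des composantes des $D_k\cap S_{\mathcal{C}}$ successifs; puis je calculerais les auto-intersections des $nq$ courbes exceptionnelles sur $S_{\mathcal{C}}$ (via la description torique ou par la formule d'adjonction) et v\'erifierais qu'aucune n'est une $(-1)$-courbe, ce qui garantit la minimalit\'e.
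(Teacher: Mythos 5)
Votre architecture est exactement celle du papier : récurrence sur $n$, fondée sur le calcul de l'éclatement $\varsigma_1$ dans les trois cartes (lissité dans $U_1$ et $U_2$ via l'absence de facteurs multiples de $\fhq$, équation $z_3^{p-q}+\hq{x_3}{y_3}=0$ dans $U_3$, fibre exceptionnelle formée de $q$ courbes rationnelles concourantes au point qu'on éclate ensuite), puis transfert de l'hypothèse de récurrence au germe $(\B{p-q}{q},O)$. Cette partie-là est correcte et coïncide avec le Lemme préparatoire du papier et sa mise en récurrence.

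En revanche, il y a un vrai trou : la minimalité dans le cas $r=1$, que vous identifiez vous-même comme « la principale difficulté », n'est pas démontrée mais seulement annoncée (« je ferais », « je calculerais », « je vérifierais »). Or c'est précisément là que se situe le contenu non trivial de la proposition, et ce à deux endroits. Pour le cas de base $p=q+1$, le papier ne se contente pas de la lissité : il calcule $F_i\cdot F_i=-q\leq -3$ en écrivant le diviseur principal de $g=x$ relevé à $S$, soit $\Div{g^{\star}\mid_{S}}=C'+\sum_{i=1}^{q}F_i$, et en utilisant $\Div{g^{\star}\mid_{S}}\cdot F_i=0$. Pour l'étape de récurrence, l'hypothèse de récurrence (minimalité de la résolution de $\B{(n-1)q+1}{q}$) ne couvre que les courbes créées par les éclatements ultérieurs ; il reste à contrôler les auto-intersections des transformés stricts $F'_i$ des $q$ courbes issues du premier éclatement, qui chutent à chaque éclatement suivant puisque le centre est sur ces courbes. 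Le papier obtient $F'_i\cdot F'_i\leq -2$ par un argument global : il prend un élément général $g$ de l'idéal $\star$-simple spécial $\mathcal{P}_{Q_{n-1}}$, dont le transformé total donne $\Dl'=\sum_{i=0}^{n-1}(i+1)\Dl_i$, et conclut en coupant le diviseur principal $\Div{g^{\star}\mid_{S_{\mathcal{C}}}}$ avec $F'_i$. Votre stratégie alternative (suivre les équations dans les cartes $\Delta_{3^k1}$, $\Delta_{3^k2}$ et calculer directement les auto-intersections, puis invoquer le critère de Castelnuovo) est plausible en principe, mais en l'état elle n'est qu'un plan : aucune auto-intersection n'est effectivement calculée, ni au cas de base ni à l'étape d'hérédité, de sorte que l'énoncé « $S_{\mathcal{C}}$ est la résolution \emph{minimale} » reste non justifié.
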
 
\begin{proof}
La proposition  se d\'emontre par r\'ecurrence sur l'entier $n\geq 1$. Mais d'abord  d\'emontrons le lemme suivant.\\

 On consid\`ere l'\'eclatement de $\AF_{\KK}^3$ de centre $O$, $\varsigma_1:X_1\rightarrow \AF_{\KK}^3$. On note  $x_i$, $y_i$, $ z_i$ les coordonn\'ees canoniques de l'ouvert torique affine $U_i$, pour l'entier $1\leq i\leq 3$.

Soit $S$ le transform\'e strict de $\B{p}{q}$ induit par le morphisme $\varsigma_1:X_1\rightarrow \AF_{\KK}^3$.  Par abus de notation, on note $\varsigma_1:S\rightarrow \B{p}{q}$ la restriction du morphisme $\varsigma_1:X_1\rightarrow \AF_{\KK}^3$ \`a $S$.\\
\begin{lemme}
\label{le:prconst}
 Les ouverts de $S$, $U_1\cap S$ et $U_2\cap S$ sont lisses et $U_3\cap S=\{(x_3,y_3,z_3)\in U_3\mid z_3^{p-q}+\hq{x_3}{y_3}=0\}$.

La fibre exceptionnelle du morphisme  $\varsigma_1:S\rightarrow \B{p}{q}$ est la r\'eunion de $q$ courbes rationnelles. Les courbes ne s'intersectent qu'en le point ferm\'e de $U_3$ fix\'e par l'action du tore alg\'ebrique $(\KK^{\star})^3$.

Si $p-q=1$, alors $S$ est la r\'esolution minimale de  $\B{p}{q}$.
\end{lemme}
\begin{proof}
Les restrictions du morphisme $\varsigma_1:X_1\rightarrow \AF_{\KK}^3$ aux ouverts $U_i$ sont donn\'ees de la fa\c con suivante:

\begin{itemize}
 \item[] $U_1\rightarrow \AF_{\KK}^3$, $(x_1,y_1, z_1)\mapsto (x_1,x_1y_1,x_1z_1)$;
\item[] $U_2\rightarrow \AF_{\KK}^3$, $(x_2,y_2, z_2)\mapsto (y_2x_2,y_2,y_2z_2)$;
\item[] $U_3\rightarrow \AF_{\KK}^3$, $(x_3,y_3, z_3)\mapsto (z_3x_3,z_3y_3,z_3)$.
\end{itemize}
Ainsi, on obtient que 
\begin{itemize}
 \item[] $U_1\cap S=\{(x_1,y_1,z_1)\in U_1\mid z_1^{p}x_1^{p-q}+\hq{1}{y_1}=0\}$;
\item[] $U_2\cap S=\{(x_2,y_2,z_2)\in U_2\mid z_2^{p}y_2^{p-q}+\hq{x_2}{1}=0\}$;
\item[] $U_3\cap S=\{(x_3,y_3,z_3)\in U_3\mid z_3^{p-q}+\hq{x_3}{y_3}=0\}$.
\end{itemize}
Par un calcul direct, on montre que $U_1\cap S$ et $U_2\cap S$ sont lisses. Ceci ach\`eve la preuve de la premi\`ere partie du lemme.\\

Soit $F$ la fibre exceptionnelle du morphisme   $\varsigma_1:S\rightarrow \B{p}{q}$. Alors on a 
\begin{center}
$U_3\cap F=\{(x_3,y_3,z_3)\in U_3\mid z_3=0,\;\hq{x_3}{y_3}=0\}$.
\end{center}
Comme $\fhq$ est un polyn\^ome homog\`ene de degr\'e $q$ sans  facteurs multiples, l'ensemble  $U_3\cap F$ est la r\'eunion de $q$ courbes rationnelles qui s'intersectent en l'origine de $U_3$.

On remarque que les deux ensembles suivants sont de cardinalit\'e $q$.
\begin{itemize}
 \item[]$(X_1\backslash U_3)\cap F\cap U_1 =\{(x_1,y_1,z_1)\in U_1\mid z_1=0,\;x_1=0,\; \hq{1}{y_1}=0\}$;
\item[] $(X_1\backslash U_3)\cap F\cap U_2 =\{(x_2,y_2,z_2)\in U_2\mid z_2=0,\;y_2=0,\; \hq{x_2}{1}=0\}$.
\end{itemize}
Par cons\'equent, la fibre exceptionnelle est la r\'eunion d'exactement $q$ courbes rationnelles qui s'intersectent en l'origine de $U_3$. Ceci ach\`eve la preuve de la deuxi\`eme parti du lemme.\\

Si $p-q=1$,  $S$ est une surface lisse. On  consid\`ere la fonction r\'eguli\`ere de $\AF^{3}_{\KK}$, $g(x,y,z)=x$. Alors $g^{\star}=g\circ \varsigma_1$ est une fonction r\'eguli\`ere de $X_1$. 

Soit $C$ le diviseur principal  de $\B{p}{q}$  associ\'e \`a la restriction de $g$ \`a $\B{p}{q}$, c'est-\`a-dire $C:=\Div{g\mid_{\B{p}{q}}}$. On remarque que  $C$  est une courbe (l'intersection sch\'ematique de $\Div{g}$ et $\B{p}{q}$ est irr\'eductible et r\'eduit). Notons $C'$ le transform\'e strict de $C$  par  le morphisme  $\varsigma_1:S\rightarrow \B{p}{q}$. Par un calcul direct, on peut montrer que $C'$ intersecte $F$ en l'origine de $U_3$. 

On consid\`ere la  restriction de  $g^{\star}$ \`a l'ouvert $U_3$. Alors, on a

\begin{center}
 $g^{\star}=z_3x_3$.
\end{center}
 Par cons\'equent, le diviseur principal  de  $S$ associ\'e \`a  la restriction de  $g^{\star}$ \`a $S$ est le suivant:
\begin{center}
$\Div{g^{\star}\mid_{S}}=C'+\sum_{i=1}^{q}F_i$,
 \end{center}
o\`u les $F_i$ sont les composantes  irr\'eductibles de $F$. Comme $\Div{g^{\star}\mid_{S}}\cdot F_i=0$ pour tout $i\in\{1,2,...,q\}$, on obtient:
\begin{center} 
 $F_i\cdot F_i=-C'\cdot F_i-\sum_{j\neq i}F_j\cdot F_i=-q\leq -3$.
\end{center}
  Ceci ach\`eve la preuve du lemme.
\end{proof}
Raisonnons par r\'ecurrence sur l'entier $n$.\\

 Si $n=1$, on a $p=q+r$ et $\CO_1=\{Q_0=O\}$. La preuve de ce cas r\'esulte du Lemme \ref{le:prconst}.\\

Maintenant, on suppose que $p=nq+r$.\\

  D'apr\`es le Lemme \ref{le:prconst}, on a  $U_3\cap S=\B{(n-1)q+r}{q}$ et la fibre exceptionnelle de $\varsigma_1:S\rightarrow \B{p}{q}$ est la r\'eunion de  $q$ courbes rationnelles.  
En appliquant l'hypoth\`ese de r\'ecurrence sur $U_3\cap S$, on montre:
\begin{itemize}
 \item[]la fibre exceptionnelle de $\sigma_n:S_{\mathcal{C}}\rightarrow \B{p}{q}$ est la r\'eunion de $nq$ courbes rationnelles;
\item[] si $r>1$, alors  $S_{\mathcal{C}}$  a un unique point singulier de type $\B{r}{q}$;
\item[] si $r=1$,  alors  $S_{\mathcal{C}}$ est lisse.\\
\end{itemize}

Pour achever la preuve de la proposition, il faut montrer que si $r=1$, alors le morphisme $\sigma_n:S_{\mathcal{C}}\rightarrow \B{p}{q}$ est la r\'esolution minimale de $\B{p}{q}$.\\

Soient $F$ la fibre exceptionnelle du morphisme $\varsigma_1:S\rightarrow \B{p}{q}$ et $F_i$ une composante irr\'eductible de $F$, $1\leq i\leq q$.
On note $F'_i$ le transform\'e strict de $F_i$ dans $S_{\mathcal C}$ (on rappelle que le morphisme $\varsigma_1$ factorise $\sigma_n$).
En utilisant l'hypoth\`ese de r\'ecurrence,  pour montrer que   $S_{\mathcal{C}}$  est la r\'esolution minimale de $\B{p}{q}$, il suffit de montrer que $F'_i\cdot F'_i\leq -2$ pour tout $1\leq i\leq q$.\\

Pour chaque $Q_i\in \CO_n$, on note $\Bl_i$ le diviseur exceptionnel $\varsigma_i^{-1}(Q_{i})$ en $X_{i+1}$ et $\Dl_i$ (resp. $\Dl_i^{\star}$) le transform\'e strict (resp. total) de $\Bl_i$ dans $X(\CO_n)$.\\

 On rappelle que la constellation $\CO_n$ est donn\'ee par la codification $Q_j=Q_{0}(3^j)$ pour $1\leq j \leq n-1$. En vertu du lemme $1.3$ de \cite{CGL96}, on a 
\begin{center}
$\Dl_i=\Dl_i^{\star}- \Dl_{i+1}^{\star}$,
\end{center}
 d'o\`u $\Dl_i^{\star}=\sum_{j=i}^{n-1}\Dl_j$.\\

Avant d'achever la preuve de la Proposition \ref{pr:const}, on introduit les notions suivantes.\\

Soient $\mathcal{I}$, $\mathcal{J}$, $\mathcal{P}$ trois id\'eaux.  Le $\star$-produit  de $\mathcal{I}$ et  $\mathcal{J}$, not\'e  $\mathcal{I}\star\mathcal{J}$, est la cl\^oture int\'egrale  du produit $\mathcal{I}\mathcal{J}$. On suppose que $\mathcal{P}$ est un id\'eal complet  non-trivial.  L'id\'eal $\mathcal{P}$   est $\star$-simple si $\mathcal{P}$ n'a pas de $\star$-factorisation non-triviale.

On suppose que $\mathcal{I}$ est un id\'eal tel que $\mathcal{I}\mathcal{O}_{X(\CO_{n})}$ soit un faisceau d'id\'eaux inversible. 
On d\'efinit  par r\'ecurrence le vecteur $\m(\mathcal{I}) =(\m_0,...,\m_{n-1})$:  $\mathcal{I}_{0}=\mathcal{I}$, $\m_0=\ord_{Q_0}\mathcal{I}_0$ et $\m_{i}=\ord_{Q_i} \mathcal{I}_{i}$, $1\leq i\leq n-1$,   o\`u  $\mathcal{I}_{i}=x^{-\m_{i-1}}\mathcal{I}_{i-1}\mathcal{O}_{X_{i},Q_{i}}$ et $x=0$ est l'\'equation locale de $\Bl_{i-1}$ en $Q_i$. 
Dans \cite{Lip88} l'auteur montre qu'il existe un unique id\'eal $\star$-simple, not\'e $\mathcal{P}_{Q_{n-1}}$, tel que le vecteur 
$\m(\mathcal{P}_{Q_{n-1}})$ est minimal pour l'ordre lexicographique inverse et $\m_{n-1}=1$. L'id\'eal 
 $\mathcal{P}_{Q_{n-1}}$  est appel\'e {\it l'id\'eal  $\star$-simple sp\'ecial} associ\'e \`a $Q_{n-1}$.\\

Reprenons la d\'emonstration de la Proposition \ref{pr:const}.\\

 Soient $\mathcal{P}_{Q_{n-1}}$ {\it l'id\'eal  $\star$-simple sp\'ecial} associ\'e \`a $Q_{n-1}$ et  $\Dl'$  le diviseur de $X(\CO_n)$ tel que 

\begin{center}
 $\mathcal{P}_{Q_{n-1}}\mathcal{O}_{X(\CO_n)}=\mathcal{O}_{X(\CO_n)}(-\Dl')$.
\end{center}

D'apr\`es le Lemme $2.16$ de \cite{CGL96}, on a 
 \begin{center}
$\Dl'=\sum_{i=0}^{n-1}\Dl_i^{\star}$.
\end{center}
Par cons\'equent, on a 
 \begin{center}
$\Dl'=\sum_{i=0}^{n-1}(i+1)\Dl_i$.
\end{center}

Soit  $g$ un \'el\'ement g\'en\'eral de l'id\'eal $\mathcal{P}_{Q_{n-1}}$. Alors $g^{\star}=g\circ\sigma_n$ est une fonction r\'eguli\`ere de $X(\CO_n)$. On note  $C$ le diviseur associ\'e \`a $g$ et  $C'$  le transform\'e strict  de $C$ dans $X(\CO_n)$.
Alors, on a:

\begin{center}
$\Div{g^{\star}}=C'+\sum_{i=0}^{n-1}(i+1)\Dl_i$.
\end{center}

Pour un diviseur $Z$ de $X(\CO_n)$, tel que son support ne contienne pas  $S_{\mathcal{C}}$, notons  $Z\cdot S_{\mathcal{C}}$ le diviseur de $S_{\mathcal{C}}$ obtenu par la somme formelle des composantes irr\'eductibles  de $Z\cap S_{\mathcal{C}}$ pond\'er\'ees par leurs multiplicit\'es en $S_{\mathcal{C}}$.  Comme $g$ est un \'el\'ement g\'en\'eral de $\mathcal{P}_{Q_{n-1}}$ on peut supposer que le support de $C'$ ne contient pas $S_{\mathcal{C}}$. Ainsi, on obtient que 
\begin{center}
$\Div{g^{\star} \mid_{S_{\mathcal{C}}}}=C'\cdot S_{\mathcal{C}}+\sum_{i=0}^{n-1}(i+1)\Dl_i\cdot S_{\mathcal{C}}$.
\end{center}

On remarque que $\D_0\cdot S_{\mathcal{C}}= \sum_{i=1}^{q}F'_i$. En effet,  il suffit de consid\'erer l'ouvert affine $U_1$. On a donc
$U_1\cap S_{\mathcal{C}}=\{(x_1,y_1,z_1)\in U_1\mid z_1^{p}x_1^{p-q}+\hq{1}{y_1}=0\}$ et $\D_0\cap U_1= \{(x_1,y_1,z_1)\in U_1\mid x_1=0\}$. Par cons\'equent,  $(D_0\cdot S_{\mathcal{C}})\cap U_1= \sum_{i=1}^{q}(F'_i\cap U_1)$, d'o\`u $\D_0\cdot S_{\mathcal{C}}= \sum_{i=1}^{q}F'_i$.\\

On a donc 

\begin{center}
$\Div{g^{\star} \mid_{S_{\mathcal{C}}}}=C'\cdot S_{\mathcal{C}}+\sum_{i=1}^{n-1}(i+1)\Dl_i\cdot S_{\mathcal{C}} +\sum_{i=1}^{q}F'_i$. 

\end{center}

 On remarque que l'intersection $\Div{g^{\star} \mid_{S_{\mathcal{C}}}}\cdot F'_i$ est nulle  pour tout $1\leq i\leq q$, car $\Div{g^{\star} \mid_{S_{\mathcal{C}}}}$ est un diviseur principal. On remarque aussi que  $(\sum_{i=1}^{n-1}(i+1)\Dl_i\cdot S_{\mathcal{C}})\cdot F'_i\geq 2$, car il existe au moins un $ 1\leq i\leq n-1$ tel que 
$((i+1)\Dl_i\cdot S_{\mathcal{C}})\cdot F'_i\geq (i+1)\geq 2$. Par cons\'equent, on obtient que $F'_i\cdot F'_i\leq -2$ pour tout $1\leq i\leq q$, d'o\`u la proposition.
\end{proof}

Pour un polyn\^ome  $\g=\sum c_{e}x^{e_1}y^{e_2}z^{e_3}$ dans  $\KK[x,y,z]$, o\`u $e=(e_1,e_2,e_3)$ et  $c_{e} \in \KK$, on note  $\mathcal{E}(\g)$ l'ensemble des exposants $e\in \ZZ^{3}_{\geq 0}$  dont le  coefficient  $c_{e}$ est non nul, c'est-\`a-dire 
$\mathcal{E}(\g):=\{e\in\ZZ^{3}_{\geq 0}\mid c_e\neq 0\}$.
Soient  $\Gamma_{+}(\g)$ l'enveloppe convexe de l'ensemble $\{e+\RR^{3}_{\geq 0}\mid e\in \mathcal{E}(\g)\}$ et $\Gamma(\g)$  la r\'eunion des faces compactes de  $\Gamma_{+}(\g)$.
On note $\mathcal{I}(\g)$ l'id\'eal monomial  de $\KK[x,y,z]$ engendr\'e par les mon\^omes $x^{e_1}y^{e_2}z^{e_3}$ tels que $(e_1,e_2,e_3)\in \Gamma(\g)\cap \ZZ^3$, c'est-\`a-dire $\mathcal{I}(\g):=(\{x^{e_1}y^{e_2}z^{e_3}\mid (e_1,e_2,e_3)\in \Gamma(\g)\cap \ZZ^3\})$.
L'\'eventail de Newton  $\EN{\g}$ associ\'e \`a $\g$ est la subdivision de $\RR^{3}_{\geq 0}$ correspondant \`a l'\'eclatement normalis\'e de $\AF_{\KK}^3$ de centre l'id\'eal $\mathcal{I}(g)$. Pour plus de d\'etails voir \cite{GoLe91}  ou \cite{KKMS73}.\\
\begin{remarque}
\label{re:x-et-y-no-div-h}
 Dans toute la suite, \`a  automorphisme  lin\'eaire de $\AF^3_{\KK}$  pr\`es, $x$ et $y$ ne divisent pas $\hq{x}{y}$. 
\end{remarque}
 La Figure \ref{fi:PN} repr\'esente le poly\`edre de Newton $\Gamma(\f)$ associe \`a $\f:=z^p+\hq{x}{y}$.
\begin{figure}[!h]
\begin{tikzpicture}[x=0.65cm,y=0.65cm ]

\tikzfading[name=fade inside,left color=transparent!10,right color=transparent!80, inner color=transparent!50,outer color=transparent!30]
\tikzfading[name=fade outside,outer color=transparent!80, inner color=transparent!30]
\tikzfading[name=fade bottom,
left color=transparent!50, 
right color=transparent!50,top color=transparent!50, bottom color=transparent!10]

\tikzfading[name=fade left,
left color=transparent!10, 
right color=transparent!100,top color=transparent!80, bottom color=transparent!30]

 \draw[very thick] (330:3)  -- (330:5);
 \draw[very thick] (90:3)  -- (90:5);
 \draw[very thick] (210:3)  -- (210:5) ;
   \draw[very thick] (330:3) --  (90:3)  -- (210:3) -- (330:3);
 
\shade[ball color=white, path fading=fade inside]  (90:3)  -- (330:3) -- (210:3); 
\fill[path fading=fade bottom] (210:3) --(330:3) -- (330:5)   -- (210:5) ;
\shade[ball color=white, path fading=fade left] (210:3) --  (90:3) -- (90:5) -- (210:5) ;
\shade[ball color=white, path fading=fade inside] (90:3) -- (330:3)  --  (330:5)  --  (90:5);

\draw (342:3.5) node[inner sep=-1pt,below=-1pt] {{\tiny $(0,q,0)$}};

\draw (198:3.5) node[inner sep=-1pt,below=-1pt] {{\tiny $(q,0,0)$}};

\draw (75:3.3) node[inner sep=-1pt,below=-1pt] {{\tiny $(0,0,p)$}};
 \end{tikzpicture}
\caption{Poly\`edre de Newton $\Gamma(\f)$}
\label{fi:PN}
\end{figure}

Soit $H$ un plan de $\RR^3$ qui ne contient pas l'origine de $\RR^3$ et tel que l'intersection de $H$ et $\RR^3_{\geq 0}$ soit un ensemble compact.  La Figure \ref{fi:EN}  repr\'esente l'intersection de $H$ avec la subdivision $\EN{\f}$ de $\RR^3_{\geq 0}$. Chaque sommet du diagramme est identifi\'e avec le {\it vecteur extr\'emal} (autrement dit, vecteur primitif d'un c\^one de dimension $1$ de l'\'eventail $\EN{\f}$) correspondant.  On note $\tau_1$ (resp. $\tau_2$, $\tau_3$)  le c\^one engendr\'e par les vecteurs $(1,0,0)$ (resp. $(0,1,0)$, $(0,0,1)$) et $(p,p,q)$. 

\begin{figure}[!h]

\begin{tikzpicture}[font=\small]

   \draw[thick] (0,0)  -- (330:3)  (0,0)  -- (90:3)  (0,0)  -- (210:3);
   \draw[thick] (330:3) --  (90:3)  -- (210:3) -- (330:3);
\draw (0,-0.4) node[inner sep=-1pt,below=-1pt,rectangle,fill=white] {{\tiny $(p,p,q)$}};
\draw (335:3.5) node[inner sep=-1pt,below=-1pt,rectangle,fill=white] {{\tiny $(0,1,0)$}};
\draw (340:1.5) node[inner sep=-1pt,below=-1pt,rectangle,fill=white] {$\tau_2$};
\draw (205:3.5) node[inner sep=-1pt,below=-1pt,rectangle,fill=white] {{\tiny $(1,0,0)$}};
\draw (200:1.5) node[inner sep=-1pt,below=-1pt,rectangle,fill=white] {$\tau_1$};
\draw (90:3.3) node[inner sep=-1pt,below=-1pt,rectangle,fill=white] {{\tiny $(0,0,1)$}};
\draw (80:1.5) node[inner sep=-1pt,below=-1pt,rectangle,fill=white] {$\tau_3$};
    \draw[fill=black]  (0,0) circle(0.7mm) (330:3) circle(0.7mm)  (90:3) circle(0.7mm)  (210:3) circle(0.7mm);

  \end{tikzpicture}

\caption{\'Eventail de Newton $\EN{f}$}
\label{fi:EN}
\end{figure}
La  proposition suivante r\'esulte d'un calcul direct.
\begin{proposition}
\label{pr:tau12}
 Le c\^ones $\tau_1$ et $\tau_2$ sont r\'eguliers.
\end{proposition}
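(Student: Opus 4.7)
La strat\'egie consiste \`a invoquer directement la caract\'erisation classique de la r\'egularit\'e pour un c\^one de dimension $2$ dans un r\'eseau de rang $3$. Rappelons qu'un c\^one $\tau\subset \N_{\RR}$, engendr\'e par deux vecteurs primitifs $u_1,u_2\in \N=\ZZ^3$, est r\'egulier si et seulement s'il existe $u_3\in \N$ tel que $\{u_1,u_2,u_3\}$ soit une base du $\ZZ$-module $\N$, ce qui \'equivaut \`a $\det(u_1,u_2,u_3)=\pm 1$. C'est cette \'equivalence que je compte exploiter, en utilisant de mani\`ere essentielle l'hypoth\`ese $\pgcd{p}{q}=1$ fix\'ee depuis le d\'ebut de la section.

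D'abord je v\'erifie que les vecteurs extr\'emaux $(1,0,0)$, $(0,1,0)$ et $(p,p,q)$ sont bien primitifs~: les deux premiers le sont trivialement, et le troisi\`eme l'est car $\pgcd{p}{q}=1$ entra\^{\i}ne $\pgcd(p,p,q)=1$. Pour $\tau_1=\langle (1,0,0),(p,p,q)\rangle$, je cherche un vecteur $u_3=(0,b,c)\in \N$ et je calcule
\[
\det\begin{pmatrix} 1 & 0 & 0 \\ p & p & q \\ 0 & b & c\end{pmatrix}=pc-qb.
\]
Le th\'eor\`eme de B\'ezout, appliqu\'e \`a $\pgcd{p}{q}=1$, fournit des entiers $b,c$ tels que $pc-qb=1$, ce qui montre que $\tau_1$ est r\'egulier. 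Pour $\tau_2=\langle (0,1,0),(p,p,q)\rangle$, un calcul enti\`erement analogue avec $u_3=(a,0,c)$ donne
\[
\det\begin{pmatrix} 0 & 1 & 0 \\ p & p & q \\ a & 0 & c\end{pmatrix}=aq-pc,
\]
et \`a nouveau $\pgcd{p}{q}=1$ permet de trouver $a,c\in\ZZ$ avec $aq-pc=1$, d'o\`u la r\'egularit\'e de $\tau_2$.

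Il n'y a pas de v\'eritable obstacle dans cette preuve~: la seule subtilit\'e est de bien remarquer que l'hypoth\`ese de primalit\'e entre $p$ et $q$, qui intervient dans la d\'efinition m\^eme de la famille $\B{p}{q}$, suffit \`a garantir la r\'egularit\'e. J'insisterais donc surtout, dans la r\'edaction finale, sur le fait que c'est pr\'ecis\'ement la relation de B\'ezout qui assure l'existence du compl\'ement \`a une base de $\N$.
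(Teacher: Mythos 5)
Votre preuve est correcte : la caractérisation de la régularité d'un cône de dimension $2$ par l'existence d'un complément $u_3$ avec $\det(u_1,u_2,u_3)=\pm1$ est la bonne, et la relation de Bézout issue de $\pgcd{p}{q}=1$ fournit bien ce complément pour $\tau_1$ et $\tau_2$. C'est exactement le « calcul direct » que le texte invoque sans le détailler, donc votre rédaction suit essentiellement la même démarche que l'article.
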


 Soit  $\GD{\f}$ une $G$-{\it subdivision r\'eguli\`ere}  de  $\EN{\f}$, c'est-\`a-dire une subdivision r\'eguli\`ere de chaque c\^one $\tau\in\EN{\f}$  n'ayant comme ar\^etes que celles qui portent les vecteurs du syst\`eme g\'en\'erateur minimal du semi-groupe $\tau\cap \ZZ^3$. D'apr\`es \cite{BoGo95}, cette subdivision existe.\\

 On note  $\pi_{\mathcal{N}}:X(\EN{\f})\rightarrow \AF^{3}_{\KK}$  (resp. $\pi_{\mathcal{G}}:X(\GD{\f})\rightarrow X(\EN{\f})$)  le morphisme torique induit par la subdivision $\EN{\f}$ de $\RR^3_{\geq 0}$ (resp. $\GD{\f}$ de  $\EN{\f}$) et $S_{\mathcal{G}}$ le transform\'e strict de $\B{p}{q}$ associ\'e au morphisme $\pi:=\pi_{\mathcal{G}}\circ \pi_{\mathcal{N}}$
Par abus de notation, on note $\pi:S_{\mathcal{G}}\rightarrow \B{p}{q}$ la restriction du morphisme $\pi:X(\GD{\f})\rightarrow \AF_{\KK}^3$ \`a $S_{\mathcal{G}}$.\\

Soit $k$, un entier $k\geq 1$.  Pour un ensemble d'entiers $m_i\geq 2$,  $1\leq i\leq k$, on note 
$[m_1;m_2;\cdots ;m_k]$ la fraction continue d\'efinie de la fa\c con suivante:

 \begin{center}
$[m_k]:=m_k$, $[m_{k-1};m_{k}]:= \displaystyle m_{k-1}-\frac{1}{m_k}$ et $[m_1;m_2;\cdots ;m_k]:  =\displaystyle m_1 - \frac{1}{\displaystyle [m_2;\cdots ;m_k]}$.
\end{center}
La proposition suivante est une application directe du Th\'eor\`eme $6.1$ de \cite{Oka87}.
\begin{proposition}
\label{pr:Gdes1}
 $S_{\mathcal{G}}$ est une bonne r\'esolution de  $\B{p}{q}$ et son graphe dual pond\'er\'e  est une \'etoile \`a $q$ branches identiques.
Le diagramme de chaque branche est le suivant: 
\begin{center}
\begin{tikzpicture}[x=0.37cm, y=0.37cm]
 \draw (0:6.1) node[inner sep=0pt,below=2pt,rectangle,fill=white] {$ \tiny{ -m_{2}}$} 
(0:3) node[inner sep=0pt,below=2pt,rectangle,fill=white] {$ {\small -m_{k}}$};

\draw (0:9.1) node[inner sep=0pt,below=2pt,rectangle,fill=white] {$ \tiny{ -m_{1}}$};

 \draw (180:1) node[inner sep=0pt,below=-1pt,rectangle,fill=white] {$ \tiny{ E}^{2}_{0}$};

 \draw[very thick] (0, 0) -- (0:4) (0:4.35) -- (0:4.4) (0:4.75) -- (0:4.8) (0:5.15) -- (0:5.2)  (0:5.6) --(0:9.1);

\draw[fill=black]  (0,0) circle(0.7mm) (0:6.1) circle(0.7mm) (0:3) circle(0.7mm)  (0:9.1) circle(0.7mm);
   \end{tikzpicture},
\end{center}
 o\`u $\E_0$ le diviseur associ\'e au sommet  central du graphe et les entiers $m_i\geq 2$ (resp. l'entier $k$) sont d\'efinis (resp. est d\'efini) de la fa\c con suivante:
\begin{itemize}
 \item[] si $q>p$ et $q=np+r$ la division enti\`ere, $1\leq r<p$,  alors on a 
\begin{center}
 $\displaystyle\frac{p}{p-r}=[m_1;m_2;\cdots ;m_k]$;
\end{center}

\item[] si $p>q$ et  $p=nq+r$, la division enti\`ere $1\leq r<q$ (resp. $n\geq 1$), alors on a 
\begin{center}
 $\displaystyle\frac{p}{p-q}=[m_1;m_2;\cdots ;m_k]$.\end{center}

\end{itemize}
  
De plus, cette r\'esolution est minimale si et seulement si $p\not \equiv 1 \mod{q}$.
\end{proposition}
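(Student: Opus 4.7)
Le plan est de v\'erifier la non-d\'eg\'en\'erescence de $f:=z^p+h_q(x,y)$ par rapport \`a son poly\`edre de Newton, d'appliquer le Th\'eor\`eme $6.1$ de \cite{Oka87} qui produit la bonne r\'esolution $S_{\mathcal{G}}$, puis de lire le graphe dual pond\'er\'e directement sur l'\'eventail $\Gamma^{\star}(f)_{\mathcal{G}}$.

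\textbf{Non-d\'eg\'en\'erescence.} La partie compacte $\Gamma(f)$ est form\'ee du triangle de sommets $(0,0,p)$, $(q,0,0)$, $(0,q,0)$ et de ses trois ar\^etes. Sur la face triangulaire, le polyn\^ome de face est $f$ lui-m\^eme; l'annulation simultan\'ee de $\partial f/\partial x$, $\partial f/\partial y$ et $\partial f/\partial z$ dans le tore $(\KK^{\star})^{3}$ impose $pz^{p-1}=0$, ce qui est impossible. Sur les trois ar\^etes, les polyn\^omes restreints sont respectivement (\`a constante pr\`es) $z^p+x^q$, $z^p+y^q$ et $h_q(x,y)$, non d\'eg\'en\'er\'es dans le tore gr\^ace \`a $\gcd(p,q)=1$, \`a l'hypoth\`ese que $h_q$ n'a pas de facteurs multiples, et \`a la Remarque \ref{re:x-et-y-no-div-h}.

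\textbf{Structure en \'etoile.} Le th\'eor\`eme d'Oka assure alors que $\pi:S_{\mathcal{G}}\to\B{p}{q}$ est une bonne r\'esolution et que son graphe dual se lit sur l'\'eventail. Le sommet central $E_0^{2}$ correspond \`a l'adh\'erence de l'orbite associ\'ee \`a l'unique vecteur primitif int\'erieur $(p,p,q)$; sa trace sur la transform\'ee stricte consiste en $q$ points distincts, un pour chaque facteur lin\'eaire dans la d\'ecomposition $h_q=\prod_{i=1}^{q}(y-\alpha_i x)$, les conditions $\gcd(p,q)=1$ et Remarque \ref{re:x-et-y-no-div-h} garantissant que ces points sont bien s\'epar\'es et situ\'es dans la partie lisse. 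Les autres diviseurs exceptionnels proviennent uniquement des $G$-subdivisions des c\^ones $\tau_1$ et $\tau_2$ (le c\^one $\tau_3$ \'etant d\'ej\`a r\'egulier, Proposition \ref{pr:tau12}), qui par sym\'etrie du probl\`eme autour de l'involution $x\leftrightarrow y$ produisent $q$ cha\^ines identiques de courbes rationnelles se greffant au sommet central.

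\textbf{Fractions continues et minimalit\'e.} Les poids $-m_i$ le long de chaque branche s'obtiennent comme le d\'eveloppement de Hirzebruch-Jung du quotient cyclique associ\'e au c\^one $\tau_1$ (ou \`a $\tau_2$). L'indice du sous-r\'eseau engendr\'e par $(1,0,0)$ et $(p,p,q)$ dans le r\'eseau satur\'e du plan qu'ils engendrent se calcule directement et donne, selon la comparaison entre $p$ et $q$, la fraction $p/(p-q)=[m_1;\ldots;m_k]$ si $p>q$, et $p/(p-r)=[m_1;\ldots;m_k]$ si $q>p$. Tous les $m_i\geq 2$, de sorte qu'aucun diviseur de cha\^ine ne peut \^etre contract\'e; la seule courbe exceptionnelle susceptible d'\^etre $(-1)$-exceptionnelle est le sommet central $E_0^{2}$. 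Un calcul direct via la formule d'adjonction torique, compar\'e au r\'esultat de la Proposition \ref{pr:const} dans le cas $r=1$ o\`u la cha\^ine toriqueontale fournit d\'ej\`a la r\'esolution minimale, donne $E_0^{2}\cdot E_0^{2}=-1$ pr\'ecis\'ement lorsque $p\equiv 1\pmod q$, d'o\`u la derni\`ere assertion. Le point d\'elicat principal est ce calcul combinatoire de l'auto-intersection de $E_0^{2}$ et le suivi correct de la dichotomie $p\gtrless q$ pour aboutir \`a la fraction continue annonc\'ee.
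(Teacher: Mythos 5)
Votre strat\'egie globale co\"incide avec celle du papier, qui ne donne d'ailleurs aucune d\'emonstration d\'etaill\'ee : la proposition y est pr\'esent\'ee comme une application directe du Th\'eor\`eme $6.1$ de \cite{Oka87} (la v\'erification de la non-d\'eg\'en\'erescence que vous faites est correcte et implicite dans le papier). Mais l'ex\'ecution de la partie combinatoire contient une erreur r\'eelle : vous \'ecrivez que les diviseurs des branches proviennent des $G$-subdivisions des c\^ones $\tau_1$ et $\tau_2$, \og le c\^one $\tau_3$ \'etant d\'ej\`a r\'egulier, Proposition \ref{pr:tau12} \fg. C'est exactement l'inverse de ce que dit la Proposition \ref{pr:tau12} : ce sont $\tau_1$ et $\tau_2$ (engendr\'es par $(1,0,0)$, resp. $(0,1,0)$, et $(p,p,q)$) qui sont r\'eguliers, tandis que $\tau_3$, engendr\'e par $(0,0,1)$ et $(p,p,q)$, est le c\^one singulier; d'apr\`es la Proposition \ref{pr:sygemi} {\it v)}, les diviseurs exceptionnels de $\pi$ correspondent pr\'ecis\'ement aux g\'en\'erateurs minimaux du semi-groupe $\tau_3\cap\ZZ^3$, chaque $\D_{\rho}\cap\SD$ (pour $\rho$ int\'erieur \`a $\tau_3$) se d\'ecomposant en $q$ composantes, ce qui donne les $q$ branches identiques. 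Deux c\^ones r\'eguliers $\tau_1$, $\tau_2$ ne peuvent ni fournir de nouveaux rayons dans la $G$-subdivision ni produire $q$ cha\^ines.

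Cette confusion rend caduc le calcul num\'erique central tel que vous le d\'ecrivez : l'indice du sous-r\'eseau engendr\'e par $(1,0,0)$ et $(p,p,q)$ dans le r\'eseau satur\'e du plan qu'ils engendrent vaut $\pgcd{p}{q}=1$ (c'est justement la r\'egularit\'e de $\tau_1$), donc le d\'eveloppement de Hirzebruch--Jung associ\'e est vide et ne peut donner $\frac{p}{p-q}=[m_1;\cdots;m_k]$. Le bon calcul se fait sur $\tau_3$ : les mineurs $2\times 2$ de la matrice form\'ee de $(0,0,1)$ et $(p,p,q)$ ont pour p.g.c.d. $p$, c'est le quotient cyclique d'ordre $p$ dont le d\'eveloppement donne $\frac{p}{p-q}$ (cas $p>q$) ou $\frac{p}{p-r}$ (cas $q>p$, $q=np+r$). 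La partie sur la minimalit\'e (seul $\E_0$ peut \^etre une $(-1)$-courbe, auto-intersection $-1$ exactement lorsque $p\equiv 1 \mod q$, en accord avec la Proposition \ref{pr:const} et le Corollaire \ref{co:Gdes.r=1}) est dans le bon esprit mais reste \`a l'\'etat d'affirmation; telle quelle, la preuve propos\'ee ne d\'emontre ni la forme des branches ni les poids $-m_i$.
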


\begin{corollaire}
\label{co:Gdes.r=1}
 Si $p\equiv 1\mod{q}$, seul le diviseur $\E_0$ n'est pas un diviseur essentiel sur $\B{p}{q}$.
\end{corollaire}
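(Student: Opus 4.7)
Since the paper recalls in the introduction that on a normal algebraic surface the essential divisors are exactly the irreducible components of the exceptional fibre of the minimal resolution, the plan is to compare $S_{\mathcal G}$ with the minimal resolution $S_{\mathcal C}$ constructed in Proposition \ref{pr:const}. Write $p = nq+1$ with $n \geq 1$. Proposition \ref{pr:const} (case $r=1$) then gives $nq$ irreducible exceptional components in $S_{\mathcal C}$, while Proposition \ref{pr:Gdes1} gives $1 + qk$ in $S_{\mathcal G}$, where $k$ is the length of the Hirzebruch--Jung expansion of $p/(p-q) = (nq+1)/((n-1)q+1)$. The target count matches the corollary precisely when $k = n$.

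The first task is a routine induction on $n$ identifying the expansion as
\[
\frac{nq+1}{(n-1)q+1} \;=\; [\,\underbrace{2;2;\ldots;2}_{n-1 \text{ twos}};\,q+1\,],
\]
the inductive step following from the identity
\[
\frac{(n+1)q+1}{nq+1} \;=\; 2 - \frac{1}{\,(nq+1)/((n-1)q+1)\,},
\]
and the base case $n=1$ giving $[q+1]$ directly. Hence $k = n$ and $S_{\mathcal G}$ has exactly $nq+1$ exceptional components, one more than $S_{\mathcal C}$.

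To finish, I would use that the minimal resolution $S_{\mathcal C}$ is dominated by $S_{\mathcal G}$ through a birational morphism $S_{\mathcal G} \to S_{\mathcal C}$; since both surfaces are smooth and their Picard numbers differ by exactly one, this morphism is the blowdown of a single $(-1)$-curve on $S_{\mathcal G}$. By Proposition \ref{pr:Gdes1} every branch component of $S_{\mathcal G}$ has self-intersection $-m_i \leq -2$, so the contracted curve must be the central divisor $E_0$, which is therefore the unique exceptional component of $S_{\mathcal G}$ that is not essential. A quick consistency check is that the branch vertex adjacent to $E_0$ carries weight $-m_k = -(q+1)$, so after contracting $E_0$ its self-intersection becomes $-(q+1)+1 = -q \leq -3$, leaving every image component with self-intersection at most $-2$; this confirms that the blowdown really produces the minimal model. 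The essentially only obstacle in the plan is the continued-fraction computation $k=n$; the remainder is counting plus elementary intersection bookkeeping.
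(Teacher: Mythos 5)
Your proposal is correct and follows essentially the same route as the paper: the identical continued-fraction computation giving $k=n$, hence $nq+1$ exceptional components for $S_{\mathcal G}$ against the $nq$ components of the minimal resolution $S_{\mathcal C}$ from Proposition \ref{pr:const}, and then the identification of the single superfluous component as $E_0$. The only (cosmetic) difference is in that last step: the paper invokes Castelnuovo's criterion together with the symmetry of the $q$ identical branches, while you use the explicit branch weights $-m_i\leq -2$ and the factorization of $S_{\mathcal G}$ through the minimal resolution.
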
 \begin{proof} Soit $n\geq 1$ tel que $p=nq+1$.
En vertu de la Proposition \ref{pr:Gdes1}, on a 
 \begin{center}
$\displaystyle \frac{p}{p-q} = \displaystyle \frac{nq +1 }{(n-1)q+1}  =\displaystyle 2 - \frac{1}{\displaystyle 2 -\frac{\ddots}{\displaystyle 2-\frac{1}{\displaystyle  q+1}}}.$

\end{center}

En particulier l'entier $k$ est \'egal \`a $n$, d'o\`u le graphe dual de la r\'esolution  $\pi:\SD \rightarrow \B{p}{q}$ a $nq +1$ sommets. 
D'apr\`es la Proposition  \ref{pr:const}, il n'y a que un diviseur de la fibre exceptionnelle de $\pi$ qui n'est pas un diviseur essentiel. D'apr\`es {\it le crit\`ere de contraction de Castelnuovo}, ce diviseur a une auto-intersection \'egale \`a $-1$. 

Comme les branches du  graphe dual de la r\'esolution $\pi:\SD \rightarrow \B{p}{q}$  sont identiques, forcement le diviseur 
$\E_0$   a une  auto-intersection \'egal \`a $-1$, d'o\`u le corollaire.  
 \end{proof}

Un polyn\^ome $\g=\sum c_{e}x^{e_1}y^{e_2}z^{e_3}$, o\`u $e=(e_1,e_2,e_3)$ et  $c_{e} \in \KK$, est appel\'e {\it non}-{\it d\'eg\'en\'er\'e par rapport \`a la fronti\`ere de Newton} si pour toute face compacte $\gamma$ de $\EN{\g}$ le polyn\^ome $\g_{\gamma}:=\sum_{e\in \gamma}c_ex^{e_1}y^{e_2}z^{e_3}$ est non singulier sur le tore $T:=\N\oplus_{\ZZ}\KK$, c'est-\`a-dire les polyn\^omes $\g_{\gamma}$, $\partial_x\g_{\gamma}$, $\partial_y\g_{\gamma}$, $\partial_z\g_{\gamma}$ n'ont pas de z\'ero commun en dehors  de l'ensemble $xyz=0$.\\

Dans la proposition suivante on suppose que $S$ est une hypersurface normale de $\AF^{3}_{\KK}$, donn\'ee par l'\'equation $\g=0$, o\`u $\g$ est un polyn\^ome irr\'eductible 
non-d\'eg\'en\'er\'e par rapport \`a la fronti\`ere de Newton $\Gamma(\g)$. De plus, on suppose  que $O$ est l'unique point singulier de $S$.\\

On consid\`ere une $G$-subdivision r\'eguli\`ere $\GD{\g}$ de l'\'eventail de Newton $\EN{g}$. On note $\pi':X(\GD{\g})\rightarrow \AF^{3}_{\KK}$ le morphisme torique induit par la subdivision $\GD{\g}$ du c\^one $\RR^{3}_{\geq 0}$ et  $X$ le transform\'e strict de $S$ dans $X(\GD{\g})$. Par abus de notation, on note $\pi':X\rightarrow S$ la restriction du morphisme $\pi':X(\GD{\g})\rightarrow \AF_{\KK}^3$ \`a $X$.\\  

 La proposition suivante    r\'esulte  des  Lemmes $10.2$ et $10.3$ de \cite{Var76} (pour avoir plus de d\'etails, voir \cite{Mer80})  ou du Th\'eor\`eme principal et de la Remarque {\it a}) de la Section $4$  de \cite{GoLe91}.\\

\begin{proposition}
\label{pr:cr_nor-g}  Le morphisme $\pi':X\rightarrow S$ est une d\'esingularisation de $S$. 

 Si l'hypersurface $S$ ne contient pas de $T$-orbite de dimension $1$, alors le morphisme $\pi':X(\GD{\g})\rightarrow \AF^3_{\KK}$ est une r\'esolution plong\'ee de $S$, c'est-\`a-dire $\pi':X(\GD{\g})\rightarrow \AF_{\KK}^3$ est un morphisme propre et birationnel, $\pi':X(\GD{\g})\backslash(\pi')^{-1}(O)\rightarrow \AF_{\KK}^3\backslash \{O\}$  est un isomorphisme et $(\pi')^{-1}(S)$ est un diviseur \`a croisements normaux.
\end{proposition}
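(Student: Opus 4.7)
La proposition est une application directe de la th\'eorie classique des modifications toriques des hypersurfaces non-d\'eg\'en\'er\'ees par rapport \`a leur fronti\`ere de Newton, telle que d\'evelopp\'ee par Varchenko et Gonzalez-Sprinberg--Lejeune. La strat\'egie est locale et combinatoire: raisonner carte par carte de $X(\GD{\g})$ en lisant sur la g\'eom\'etrie de $\Gamma(\g)$, la non-d\'eg\'en\'erescence servant au moment o\`u il faut exhiber la lissit\'e du transform\'e strict.

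Pour la premi\`ere assertion, je fixerais un c\^one r\'egulier maximal $\sigma=\langle v_1,v_2,v_3\rangle$ de $\GD{\g}$ et la carte affine associ\'ee $U_\sigma\cong\AF^3_{\KK}$ de coordonn\'ees $(u_1,u_2,u_3)$. Dans cette carte, le tir\'e en arri\`ere se factorise comme $(\pi')^{\star}\g = u_1^{a_1}u_2^{a_2}u_3^{a_3}\,\tilde{\g}_\sigma$, avec $a_i=\min\{\langle v_i,e\rangle\mid e\in \mathcal{E}(\g)\}$, l'\'equation locale du transform\'e strict $X\cap U_\sigma$ \'etant $\tilde{\g}_\sigma=0$. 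L'observation centrale est que, lorsque $v_i$ est un vecteur strictement int\'erieur \`a $\RR^3_{\geq 0}$, la restriction de $\tilde{\g}_\sigma$ au diviseur exceptionnel $\{u_i=0\}$ est, \`a un mon\^ome inversible pr\`es, le polyn\^ome de face $\g_\gamma$ associ\'e \`a la face compacte $\gamma$ de $\Gamma(\g)$ duale \`a $v_i$. La non-d\'eg\'en\'erescence de $\g$ par rapport \`a $\gamma$ donne alors la lissit\'e de $X$ le long du diviseur exceptionnel correspondant; en dehors du lieu exceptionnel, $\pi'$ est un isomorphisme sur son image et $S$ y est lisse par hypoth\`ese. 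On conclut que $X$ est lisse, ce qui \'etablit le premier \'enonc\'e.

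Pour la seconde assertion, les trois conditions d'une r\'esolution plong\'ee sont \`a v\'erifier s\'epar\'ement. La propret\'e et la birationalit\'e sont automatiques, $\pi'$ \'etant induit par un raffinement de l'\'eventail du c\^one r\'egulier $\RR^3_{\geq 0}$. Le caract\`ere \`a croisements normaux de $(\pi')^{-1}(S)$ d\'ecoulera de trois faits: la r\'egularit\'e de $\GD{\g}$, qui assure les croisements normaux de la r\'eunion des diviseurs toriques; la lissit\'e de $X$ \'etablie ci-dessus; et un calcul local de transversalit\'e de $X$ avec chaque diviseur torique, lequel s'obtient \`a nouveau via la non-d\'eg\'en\'erescence de $\g$.

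L'obstacle principal est la propri\'et\'e d'isomorphisme au-dessus de $\AF^3_{\KK}\setminus\{O\}$, et c'est ici qu'intervient l'hypoth\`ese d'absence de $T$-orbite de dimension $1$ dans $S$. Cette hypoth\`ese signifie exactement que $\g$ poss\`ede au moins un mon\^ome en chacune des puissances pures $x^a$, $y^b$ et $z^c$, de sorte que $\Gamma_+(\g)$ rencontre les trois axes de coordonn\'ees. On en d\'eduit que tous les vecteurs extr\'emaux nouvellement introduits par $\EN{\g}$ puis par $\GD{\g}$ sont strictement int\'erieurs \`a $\RR^3_{\geq 0}$; par cons\'equent, les diviseurs exceptionnels de $\pi'$ se contractent tous sur l'origine, et $\pi'$ est bien un isomorphisme hors de $O$. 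Sans cette hypoth\`ese, $\EN{\g}$ pourrait introduire de nouvelles ar\^etes dans les c\^ones de dimension $2$ bordant $\RR^3_{\geq 0}$, donnant lieu \`a des diviseurs exceptionnels contract\'es sur un axe de coordonn\'ees, ce qui invaliderait l'\'enonc\'e.
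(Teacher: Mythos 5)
Your argument is correct, but it takes a different route from the paper, which gives no computation at all: the paper deduces the proposition directly from Lemmes $10.2$ et $10.3$ de Varchenko (voir aussi Merle) ou du th\'eor\`eme principal et de la Remarque a) de la Section $4$ de Gonzalez-Sprinberg--Lejeune, c'est-\`a-dire par simple citation. Ce que vous faites est en substance une reconstruction de la preuve de ces r\'esultats cit\'es: factorisation monomiale de $(\pi')^{\star}\g$ carte par carte, lissit\'e du transform\'e strict le long des diviseurs exceptionnels via la non-d\'eg\'en\'erescence des polyn\^omes de faces, croisements normaux par r\'egularit\'e de $\GD{\g}$ et transversalit\'e, puis localisation du lieu exceptionnel. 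La plus-value de votre r\'edaction est de rendre explicite l'endroit o\`u chaque hypoth\`ese intervient, en particulier la traduction combinatoire de l'absence de $T$-orbite de dimension $1$ (pr\'esence d'une puissance pure de chaque variable dans $\g$, donc tout nouveau vecteur extr\'emal de $\EN{\g}$ puis de $\GD{\g}$ est strictement int\'erieur \`a $\RR^{3}_{\geq 0}$, donc le lieu exceptionnel est au-dessus de $O$), ce qui est exactement le contenu de la remarque cit\'ee; le prix \`a payer est la longueur, l\`a o\`u l'article s'appuie sur des r\'ef\'erences \'etablies. Deux points m\'eriteraient d'\^etre dits explicitement pour boucler votre argument: pour l'isomorphisme hors de $O$ il faut aussi observer que les faces de dimension $2$ du bord de $\RR^{3}_{\geq 0}$ restent des c\^ones de $\GD{\g}$ (elles sont r\'eguli\`eres, et la $G$-subdivision n'ajoute que des ar\^etes port\'ees par des g\'en\'erateurs minimaux des semi-groupes, lesquels, sur une telle face, se r\'eduisent \`a $\e_i$, $\e_j$); et pour la premi\`ere assertion, la notion de d\'esingularisation de l'article demande aussi l'isomorphisme au-dessus de $S\setminus\{O\}$, point que ni la citation ni votre esquisse ne d\'etaillent, mais qui est sans cons\'equence pour les applications du texte o\`u l'hypoth\`ese d'absence d'orbite de dimension $1$ est de toute fa\c con v\'erifi\'ee.
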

On remarque que le polyn\^ome $\f:=z^p+\hq{x}{y}$ est non-d\'eg\'en\'er\'e par rapport \`a la fronti\`ere de Newton et que  l'hypersurface $\B{p}{q}$ ne contient pas de $T$-orbite de dimension $1$ (voir la Remarque \ref{re:x-et-y-no-div-h}).

\begin{corollaire}
\label{co:cr_nor}
le morphisme $\pi:X(\GD{\f})\rightarrow \AF^3_{\KK}$ est une r\'esolution plong\'ee de $\B{p}{q}$

\end{corollaire}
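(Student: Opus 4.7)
The plan is to verify the hypotheses of Proposition~\ref{pr:cr_nor-g} applied with $\g := \f = z^p + \fhq(x,y)$ and then invoke the proposition directly. The two non-trivial inputs (non-degeneracy of $\f$ with respect to its Newton boundary, and the fact that $\B{p}{q}$ contains no $T$-orbit of dimension~$1$) are precisely what the remark preceding the corollary asserts, so the work reduces to justifying these assertions together with the standing hypotheses on $\B{p}{q}$ (normality, irreducibility of the defining equation, uniqueness of the singular point).

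First I would check that $\B{p}{q}$ is a normal hypersurface whose singular locus is exactly $\{O\}$ by a direct gradient computation: the equation $\partial_z \f = p z^{p-1} = 0$ forces $z = 0$ at any singular point, after which one needs a common zero of $\fhq$, $\partial_x \fhq$, $\partial_y \fhq$; since $\fhq$ is a squarefree homogeneous polynomial of degree $q$, this common zero locus is $\{O\}$. Normality then follows because a hypersurface singular only in codimension $\geq 2$ is automatically normal (Serre's criterion). Irreducibility of $\f$ in $\KK[x,y,z]$ follows from the classical criterion for binomials $z^p - a$ applied to $a = -\fhq$: any factorization would force $-\fhq$ to be an $r$-th power for some prime $r \mid p$, which would require $r \mid q$, contradicting $\gcd(p,q) = 1$.

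Next I would run through the compact faces of $\Gamma(\f)$, which are the triangular $2$-face $\Delta$ with vertices $(q,0,0)$, $(0,q,0)$, $(0,0,p)$, together with its three edges and three vertices. Vertices contribute single monomials, non-vanishing on the torus. For $\Delta$ and for the two edges adjacent to $(0,0,p)$, the restriction $\f_\gamma$ contains $z^p$ with nonzero coefficient, so $\partial_z \f_\gamma = p z^{p-1}$ has no zero on the torus $\{xyz\neq 0\}$. For the edge between $(q,0,0)$ and $(0,q,0)$, the restriction is exactly $\fhq(x,y)$, and squarefreeness of $\fhq$ is equivalent to the absence of a common zero of $\fhq, \partial_x\fhq, \partial_y\fhq$ on $\{xy\neq 0\}$ (a multiple root would give one). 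The $1$-dimensional $T$-orbits of $\AF^3_\KK$ are the three punctured coordinate axes: the $z$-axis is not contained in $\B{p}{q}$ since $\f(0,0,z) = z^p \not\equiv 0$, and by Remark~\ref{re:x-et-y-no-div-h} neither $x$ nor $y$ divides $\fhq$, so the $x$- and $y$-axes are likewise not contained. With all hypotheses of Proposition~\ref{pr:cr_nor-g} verified, the corollary follows immediately. The only genuinely combinatorial step is the face-by-face non-degeneracy check, but each case reduces to a single one-line verification, so no step should be a real obstacle.
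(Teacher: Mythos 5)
Your proof is correct and follows essentially the same route as the paper: the paper's argument is precisely the remark preceding the corollary (non-degeneracy of $\f$ with respect to its Newton boundary and the absence of $1$-dimensional $T$-orbits in $\B{p}{q}$, via Remark~\ref{re:x-et-y-no-div-h}), after which Proposition~\ref{pr:cr_nor-g} applies; you simply carry out these verifications, plus the standing hypotheses (isolated singularity, normality, irreducibility), in detail. The only micro-remark is that Capelli's criterion for $z^p-a$ also has the exceptional case $4\mid p$, $a\in -4K^4$, but that case is excluded just as immediately by the squarefreeness of $\fhq$ (or by $\gcd(p,q)=1$), so nothing is lost.
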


La proposition suivante \'etablit une relation entre le morphisme $\pi:\SD\rightarrow \B{p}{q}$ et le morphisme  $\sigma_n:S_{\mathcal C}\rightarrow \B{p}{q}$  (voir les Propositions \ref{pr:const}  et \ref{pr:Gdes1}).

\begin{proposition}
\label{pr:Gdes2}
Si $p>q$, le morphisme   $\sigma_n:S_{\mathcal C}\rightarrow \B{p}{q}$  factorise le morphisme  $\pi:\SD\rightarrow \B{p}{q}$, c'est-\`a-dire il existe un morphisme $\pi_0:\SD\rightarrow S_{\mathcal C}$ tel que $\pi=\sigma_n \circ \pi_0$.
\end{proposition}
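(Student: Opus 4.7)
Les deux morphismes $\sigma_n$ et $\pi$ proviennent de morphismes toriques: $\sigma_n$ est la restriction aux transform\'es stricts du morphisme torique $X(\Sigma_n) \rightarrow \AF_{\KK}^3$ induit par l'\'eventail $\Sigma_n$ associ\'e \`a la cha\^ine $\CO_n$, et $\pi$ est la restriction analogue du morphisme torique $X(\GD{\f}) \rightarrow \AF_{\KK}^3$. Il suffit donc de montrer que $\GD{\f}$ raffine $\Sigma_n$: dans ce cas l'identit\'e du tore se prolonge de mani\`ere unique en un morphisme torique $\rho: X(\GD{\f}) \rightarrow X(\Sigma_n)$ tel que $\sigma_n \circ \rho = \pi$, et la restriction $\pi_0 := \rho|_{\SD}$ se factorise par $S_{\mathcal{C}}$ car les transform\'es stricts sont compatibles avec les raffinements d'\'eventails.

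La premi\`ere \'etape consiste \`a d\'ecrire $\Sigma_n$ explicitement. Par r\'ecurrence sur la codification $Q_j = Q_0(3^j)$, le vecteur introduit au $j$-i\`eme \'eclatement est $v_j := j\e_1 + j\e_2 + \e_3$. En posant $v_0 := \e_3$, les ar\^etes de $\Sigma_n$ sont $\e_1, \e_2, \e_3, v_1, \ldots, v_n$, et ses c\^ones maximaux tri-dimensionnels sont $\langle \e_1, v_{k-1}, v_k\rangle$ et $\langle \e_2, v_{k-1}, v_k\rangle$ pour $k = 1, \ldots, n$, plus le c\^one central $\langle \e_1, \e_2, v_n\rangle$.

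La deuxi\`eme \'etape identifie le syst\`eme g\'en\'erateur minimal du semi-groupe $\tau_3 \cap \N$, o\`u $\tau_3 = \langle \e_3, (p,p,q)\rangle$ est l'unique face de $\EN{\f}$ qui n'est pas d\'ej\`a r\'eguli\`ere (voir Proposition~\ref{pr:tau12}). Cette face vit dans le plan $\{x_1 = x_2\}$, dont le r\'eseau de rang $2$ est $\ZZ\e_3 \oplus \ZZ(1,1,0)$; dans ces coordonn\'ees $\tau_3$ s'identifie au c\^one $\langle (0,1),(p,q)\rangle \subset \ZZ^2$. \'Ecrivant $p = nq+r$ avec $1 \leq r \leq q-1$, le vecteur $v_k$ correspond \`a $(k,1)$, est primitif, et appartient \`a ce c\^one pr\'ecis\'ement lorsque $kq \leq p$, soit $k \leq n$. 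De plus, aucun tel $v_k$ ne se d\'ecompose comme somme de deux vecteurs non nuls du r\'eseau de $\tau_3$, car une telle d\'ecomposition imposerait un sommant \`a deuxi\`eme coordonn\'ee nulle, ce qui est impossible dans $\tau_3$. Par cons\'equent, $v_1, \ldots, v_n$ appartiennent tous au syst\`eme g\'en\'erateur minimal de $\tau_3 \cap \N$. Comme en dimension $2$ toute subdivision r\'eguli\`ere n'utilisant que des rayons du syst\`eme g\'en\'erateur minimal doit tous les utiliser, toute $G$-subdivision $\GD{\f}$ contient $v_1, \ldots, v_n$ parmi ses rayons, et par compatibilit\'e ces m\^emes rayons apparaissent dans les subdivisions des c\^ones tri-dimensionnels ambiants $\langle \e_1, \e_3, (p,p,q)\rangle$ et $\langle \e_2, \e_3, (p,p,q)\rangle$.

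Il reste \`a v\'erifier que tout c\^one maximal de $\GD{\f}$ est inclus dans un c\^one maximal de $\Sigma_n$. Dans $\langle \e_1, \e_3, (p,p,q)\rangle$ la subdivision produit les c\^ones $\langle \e_1, v_{k-1}, v_k\rangle$ pour $k = 1, \ldots, n$; un calcul direct donne $|\det(\e_1, v_{k-1}, v_k)| = 1$, donc ces c\^ones sont r\'eguliers et co\"incident avec des c\^ones maximaux de $\Sigma_n$. La r\'egion r\'esiduelle $\langle \e_1, v_n, (p,p,q)\rangle$ (\'eventuellement subdivis\'ee davantage si $r > 1$) est contenue dans $\langle \e_1, \e_2, v_n\rangle$ en vertu de l'identit\'e $(p,p,q) = r\e_1 + r\e_2 + qv_n$, laquelle montre \'egalement que $\langle \e_1, \e_2, (p,p,q)\rangle$ et toutes ses subdivisions sont contenues dans $\langle \e_1, \e_2, v_n\rangle$; la sym\'etrie r\`egle le cas de $\langle \e_2, \e_3, (p,p,q)\rangle$. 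Ceci \'etablit le raffinement de $\Sigma_n$ par $\GD{\f}$ et fournit le morphisme $\pi_0$ cherch\'e. La principale difficult\'e r\'eside dans l'identification du syst\`eme g\'en\'erateur minimal de $\tau_3$ dans la deuxi\`eme \'etape, o\`u les hypoth\`eses $p > q$ et $p, q$ premiers entre eux sont essentielles.
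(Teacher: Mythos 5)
Your overall strategy --- describing the constellation fan $\Sigma_n$ explicitly and showing that $\GD{\f}$ refines it, so that the factorization comes from a toric morphism --- is sound and genuinely different from the paper's argument, which proceeds by induction on $n$, using the identity $(1,1,1)=\frac{(p,p,q)+(p-q)\e_3}{p}$ and the observation that the Newton blow-up followed by the elementary subdivision centred at $(1,1,1)$ coincides with the blow-up of $O$ followed by the Newton blow-up of the chart $U_3$ (Lemme \ref{le:prconst}). Your first two steps are correct: the maximal cones of $\Sigma_n$ are as you list them, and the vectors $v_k=(k,k,1)$, $1\le k\le n$, are irreducible in $\tau_3\cap\ZZ^3$, hence are rays of any $G$-subdivision.

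Step 3, however, has a genuine gap. You assert that inside $\langle \e_1,\e_3,(p,p,q)\rangle$ ``la subdivision produit les c\^ones $\langle\e_1,v_{k-1},v_k\rangle$'', justified only by the presence of the rays $v_k$ and by $|\det(\e_1,v_{k-1},v_k)|=1$. This does not follow: a priori the $G$-subdivision could use further rays carried by minimal generators lying in the interior of this three-dimensional cone, and then some of its maximal cones could cross the planes spanned by $\e_1$ and $v_k$, hence fail to be contained in maximal cones of $\Sigma_n$; the regularity of the cones $\langle\e_1,v_{k-1},v_k\rangle$ says nothing about this. The missing ingredient is that the minimal generating system of $\langle\e_1,\e_3,(p,p,q)\rangle\cap\ZZ^3$ is $\{\e_1\}$ together with that of $\tau_3\cap\ZZ^3$: any $(a,b,c)$ in this semigroup satisfies $a\ge b\ge 0$ and $pc\ge qb$, hence decomposes as $(a-b)\e_1+(b,b,c)$ with $(b,b,c)\in\tau_3$, so the only irreducible elements are $\e_1$ and those of the face. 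Consequently every ray of $\GD{\f}$ in this cone is $\e_1$ or lies in $\tau_3$, every maximal cone is of the form $\langle\e_1,w_1,w_2\rangle$ with $\langle w_1,w_2\rangle$ a cone of the induced subdivision of $\tau_3$, and the desired containments follow, the residual region being handled by your identity $(p,p,q)=r\e_1+r\e_2+qv_n$. With this lemma added (and its symmetric analogue for $\langle\e_2,\e_3,(p,p,q)\rangle$) your proof closes; as written, the crucial compatibility of $\GD{\f}$ with the walls of $\Sigma_n$ is asserted rather than proved.
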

 \begin{proof} 
Dans la preuve de cette proposition, on peut appliquer le Lemme  \ref{le:prconst} car ses hypoth\`eses sont v\'erifi\'ees.

On remarque que 
\begin{center}
$(1,1,1)=\dfrac{(p,p,q)+(p-q)(0,0,1)}{p}$
\end{center} 
et que le c\^one engendr\'e par les vecteurs $(0,0,1)$ et $(1,1,1)$ est r\'egulier. Par cons\'equent, le vecteur $(1,1,1)$ appartient au syst\`eme g\'en\'erateur minimal du semi-groupe $\tau_3\cap\ZZ^3$, o\`u $\tau_3$ est le c\^one engendr\'e par les vecteurs $(0,0,1)$ et $(p,p,q)$ (voir la Figure \ref{fi:EN}).\\

On remarque aussi que l'\'eventail  obtenu par l'\'eclatement de Newton de $\AF_{\KK}^3$ associ\'e \`a $\f=z^p +\hq{x}{y}$ suivi de la subdivision \'el\'ementaire centr\'ee en $(1,1,1)$, co\"incide avec celui obtenu par l'\'eclatement de $\AF_{\KK}^3$  de centre le point $O$  suivi de l'\'eclatement de Newton de $U_3$ associ\'e \`a $z_3^{p-q} +\hq{x_3}{y_3}$ (voir le Lemme \ref{le:prconst}).\\

 Soit  $p=nq+r$ la division enti\`ere, $1\leq r<q$. En utilisant la remarque ci-dessus et  le Lemme \ref{le:prconst}, la proposition r\'esulte d'une r\'ecurrence sur l'entier $n\geq 1$. \end{proof}

Dans la proposition suivante, on suppose que $S$ est une hypersurface  quasi-homog\`ene de $\AF^{3}_{\KK}$ donn\'ee par l'\'equation $\g=0$, o\`u $\g$ est un polyn\^ome quasi-homog\`ene et irr\'eductible,
que  $O$ est l'unique point singulier de l'hypersurface $S$ et que $S$ ne contient pas de $T$-orbite de dimension $1$.\\

D'apr\`es la proposition \ref{pr:cr_nor-g}, le morphisme $\pi':X(\GD{\g}) \rightarrow \AF^{3}_{\KK}$ est une r\'esolution plong\'ee de  $S$. On note  $X$ le transform\'e strict de $S$ dans $X(\GD{\g})$. Par abus de notation, on note $\pi':X\rightarrow S$ la restriction du morphisme $\pi':X(\GD{\g})\rightarrow \AF_{\KK}^3$ \`a $X$.\\

Pour $\rho$ un {\it vecteur extr\'emal} de  $\GD{\g}$ (c'est-\`a-dire $\rho$ est un vecteur primitif d'un c\^one de dimension  $1$ qui appartient \`a l'\'eventail $\GD{\g}$), on note $\D_{\rho}$  l'orbite ferm\'ee associ\'ee \`a $\rho$. Notons  $E\GD{\f}$ l'ensemble des vecteurs extr\'emaux de $\GD{\g}$, et $S_{2}\EN{g}$ le $2$-squelette de $\EN{g}$ (c'est-\`a-dire $S_{2}\EN{g}$ est la r\'eunion des c\^ones de dimension $2$ qui appartiennent \`a l'\'eventail $S_{2}\EN{g}$).\\

La proposition suivante donne des \'equations locales pour les composantes irr\'eductibles de la fibre exceptionnelle de la d\'esingularisation  $\pi':X\rightarrow S$. 

Dans le cas $S=\B{p}{q}$,  on rappelle que  $\E_0$ est le diviseur associ\'e au sommet central du graphe induit par $\pi$.

\begin{proposition}\label{pr:sygemi} Soit $\rho$ un vecteur extr\'emal de $\GD{\g}$ ($\rho \in E\GD{\g}$). Alors, on a:
\begin{itemize}
\item[i)]l'intersection 
 $\D{\rho}\cap X$ n'est pas vide si et seulement si  $\rho$ appartient au 2-squelette de l'\'eventail de Newton $\EN{g}$, c'est-\`a-dire  $\rho\in S^{2}\EN{\g}\cap E\GD{\g}$;
\item[ii)]les composantes irr\'eductibles de $\D{\rho}\cap X$ sont diviseurs exceptionnels du morphisme $\pi':X\rightarrow S$  
 si et seulement si  de plus $\rho\in \ZZ_{>0}^3$;
\item[iii)] une  composante irr\'eductible $\E$ de la fibre exceptionnelle de $\pi'$ \'etant donn\'ee, il existe un unique $\rho\in E\GD{\g}$ tel que $\E\subset \D{\rho}$;
\item[iv)] si $S=\B{p}{q}$, alors $\E_0=\D{(p,p,q)}\cap\SD$;   
 \item[v)]  si $S=\B{p}{q}$, alors l'ensemble form\'e par le vecteur $(0,0,1)$ et les vecteurs  $\rho \in E\GD{\f}$  tels que les composantes irr\'eductibles de $\D{\rho}\cap \SD$ sont diviseurs exceptionnels de $\pi$ est le syst\`eme g\'en\'erateur minimal du semi-groupe $\tau\cap \ZZ^3$, o\`u $\tau$ est le c\^one engendr\'e par les vecteurs  $(0,0,1)$ et $(p,p,q)$.
\end{itemize}
\end{proposition}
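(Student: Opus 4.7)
Les assertions (i), (ii), (iii) s'\'etablissent pour toute hypersurface $S$ d\'efinie par un polyn\^ome irr\'eductible $\g$ non-d\'eg\'en\'er\'e par rapport \`a sa fronti\`ere de Newton, tandis que (iv) et (v) sont sp\'ecifiques au cas $S=\B{p}{q}$. Je travaille dans une carte torique affine $U_\sigma$ de $X(\GD{\g})$ associ\'ee \`a un c\^one maximal $\sigma=\langle v_1=\rho,v_2,v_3\rangle$, avec coordonn\'ees toriques $y_1,y_2,y_3$. Le tir\'e-en-arri\`ere s'\'ecrit $(\pi')^{\star}\g = y_1^{m_1}y_2^{m_2}y_3^{m_3}\cdot\tilde{\g}$, o\`u $m_i=\min\{\langle a,v_i\rangle : a\in\mathcal{E}(\g)\}$ et $\tilde{\g}$ d\'efinit $X$ dans $U_\sigma$. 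L'\'equation locale de $\D_{\rho}\cap X$, $\tilde{\g}|_{y_1=0}$, ne retient que les mon\^omes $a$ sur la face $\gamma_{\rho}$ de $\Gamma(\g)$ duale \`a $\rho$ : si $\rho$ est int\'erieur \`a un c\^one de dimension $3$ de $\EN{\g}$, alors $\gamma_{\rho}$ est un sommet et $\tilde{\g}|_{y_1=0}$ est un mon\^ome non nul, d'o\`u $\D_{\rho}\cap X=\emptyset$ ; si $\rho\in S^{2}\EN{\g}$, alors $\dim\gamma_{\rho}\geq 1$ et la non-d\'eg\'en\'erescence fournit une courbe lisse non vide. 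Ceci prouve (i), et (ii) en d\'ecoule puisque $\pi'(\D_{\rho})=O$ \'equivaut \`a $\rho\in\ZZ_{>0}^3$.

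Pour (iii), le point g\'en\'erique $\eta_E$ d'une composante exceptionnelle $E$ appartient \`a l'orbite ouverte d'un c\^one $\tau_0\in\GD{\g}$ avec $\dim\tau_0\in\{1,2\}$. Le cas $\dim\tau_0=1$ donne $\tau_0=\RR_{\geq 0}\rho$ et $E\subset\D_{\rho}$ ; l'unicit\'e provient de la disjonction entre l'orbite ouverte de $\rho$ et $\D_{\rho'}$ pour $\rho'\neq\rho$ (strates distinctes dans la d\'ecomposition torique). Le cas $\dim\tau_0=2$ forcerait $E$ \`a \^etre une orbite torique ferm\'ee de dimension $1$ contenue dans $X$ ; un calcul analogue \`a celui de (i) l'exclut, car dans la carte associ\'ee, $\tilde{\g}|_{y_1=y_2=0}$ est un polyn\^ome non nul en $y_3$, ses mon\^omes ayant des exposants en $y_3$ deux \`a deux distincts gr\^ace \`a l'ind\'ependance lin\'eaire de $v_3$ avec $v_1,v_2$ (qui emp\^eche $v_3$ d'\^etre constant sur la face $1$-dimensionnelle duale \`a $\tau_0$). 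C'est le point le plus d\'elicat de la preuve.

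Pour (iv) et (v), je sp\'ecialise \`a $S=\B{p}{q}$. Le vecteur $(p,p,q)$ est l'unique vecteur extr\'emal de $\EN{\f}$ int\'erieur \`a $\RR_{\geq 0}^3$ (normal \`a la face triangulaire compacte de $\Gamma(\f)$), donc par (i)--(ii) le lieu $\D_{(p,p,q)}\cap\SD$ est une r\'eunion non vide de diviseurs exceptionnels. Pour (iv), l'irr\'eductibilit\'e se v\'erifie en param\'etrant le $2$-tore de $\D_{(p,p,q)}$ par $u=y/x$ et $v=z^p/x^q$ (l\'egitime car $\gcd(p,q)=1$) : l'\'equation $z^p+\hq{x}{y}=0$ y devient $v+\hq{1}{u}=0$, graphe d'un polyn\^ome en $u$, donc irr\'eductible. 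La structure en \'etoile du graphe dual de $\SD$ (Proposition \ref{pr:Gdes1}) identifie alors cette unique composante au sommet central $\E_0$. Pour (v), la r\'egularit\'e de $\tau_1$ et $\tau_2$ (Proposition \ref{pr:tau12}) impose que leurs seuls vecteurs extr\'emaux de $\GD{\f}$ sont leurs g\'en\'erateurs propres, parmi lesquels seul $(p,p,q)$ appartient \`a $\ZZ_{>0}^3$ ; les autres vecteurs extr\'emaux fournissant des diviseurs exceptionnels sont donc contenus dans $\tau_3$, o\`u ils co\"incident, par d\'efinition d'une $G$-subdivision, avec les g\'en\'erateurs minimaux du semi-groupe $\tau_3\cap\ZZ^3$ distincts de $(0,0,1)$. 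En adjoignant $(0,0,1)$, on obtient le syst\`eme g\'en\'erateur minimal complet de $\tau_3\cap\ZZ^3$.
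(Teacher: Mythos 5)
Your proposal replaces the paper's proof, which simply invokes the Remarque~4.3 and the Lemme~4.7 of Oka's article (plus the Proposition~\ref{pr:tau12} for \emph{v)}), by a direct computation in the charts of $X(\GD{\g})$; that route is workable, but two steps do not hold as written. First, you claim \emph{i)--iii)} for \emph{every} irreducible $\g$ non-d\'eg\'en\'er\'e, dropping the standing hypothesis that $S$ contains no $T$-orbit of dimension $1$, and your proof of the ``if'' part of \emph{i)} is exactly where this hypothesis is needed: from $\rho\in S_2\EN{\g}$ you infer $\dim\gamma_{\rho}\geq 1$, but the \emph{compact} dual face of a ray lying in a boundary facet of $\RR^3_{\geq 0}$ can be a vertex (the dual edge of $\Gamma_{+}(\g)$ being non-compact). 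A counterexample occurs in this very paper: for $\g=x^2+y(y^2+z^3)$ (type $\EE_7$), the $2$-cone $\langle(1,0,0),(1,2,0)\rangle$ of $\EN{\g}$ is not regular, so every $G$-subdivision contains the ray $\rho=(1,1,0)$; its compact dual face is the vertex $(0,1,3)$, and on every orbit of $\D_{\rho}$ the surviving term of $\tilde{\g}$ is a unit, so $\D_{\rho}\cap X=\emptyset$ although $\rho\in S_2\EN{\g}\cap E\GD{\g}$. The hypothesis ``pas de $T$-orbite de dimension $1$'' forces $\mathcal{E}(\g)$ to meet each coordinate axis, hence the facets of $\RR^3_{\geq 0}$ to be single regular cones of $\EN{\g}$ with no interior rays in $\GD{\g}$, and the dual faces of $e_1,e_2,e_3$ and of the interior $2$-cones to be of dimension $\geq 1$; you must state and use this. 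A smaller imprecision of the same nature: in the ``only if'' part, ``$\tilde{\g}|_{y_1=0}$ est un mon\^ome non nul'' does not by itself give $\D_{\rho}\cap X=\emptyset$, since a monomial with positive exponents vanishes on the deeper strata of $\D_{\rho}$; one must check each orbit $O_{\tau}$, $\tau\supseteq\rho$, where the dual face is again the same vertex and the surviving term is a unit (this also closes the converse of \emph{ii)}, which otherwise needs your exclusion of $1$-dimensional orbit closures from \emph{iii)}).

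Second, in \emph{iv)} the last step is asserted rather than proved. Your parametrisation of the open orbit of $\D_{(p,p,q)}$ by $y/x$ and $z^p/x^q$ is legitimate ($\gcd(p,q)=1$) and does give that $\D_{(p,p,q)}\cap\SD$ is irreducible, but ``la structure en \'etoile identifie cette composante au sommet central $\E_0$'' is not an argument: irreducibility alone does not distinguish the central vertex from a vertex on a branch. To conclude you need an additional input, for instance that every other $\rho\in\ZZ^3_{>0}$ contributing exceptional components lies in the relative interior of $\tau_3$, so that its dual face is the edge $[(q,0,0),(0,q,0)]$ carrying $\fhq$, whence $\D_{\rho}\cap\SD$ has exactly $q\geq 3$ pairwise disjoint components ($\fhq$ squarefree) and cannot contain the central vertex; equivalently, show that $\D_{(p,p,q)}\cap\SD$ meets $q$ distinct exceptional components, so it is the unique vertex of valence $q$ of the star. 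Your treatment of \emph{v)} (regularity of $\tau_1,\tau_2$, rays of the $G$-subdivision inside $\tau_3$ being the minimal generators) coincides with the paper's and is correct, modulo the remark that for a $2$-dimensional cone every regular subdivision with rays among the minimal generators must use \emph{all} of them.
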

\begin{remarque} 
Les r\'esultats de l'article \cite{Oka87} reposent sur la construction d'une subdivision r\'eguli\`ere $\Sigma$ de l'\'eventail $\EN{\g}$. Cette construction est longue \`a d\'efinir, or quand le polyn\^ome $\g$ est quasi-homog\`ene, on peut supposer que $\Sigma$ est une $G$-subdivision r\'eguli\`ere de l'\'eventail $\EN{\g}$.
\end{remarque}
 \begin{proof} Les points {\it i),ii),iii)} et {\it iv)} de la proposition r\'esultent de la Remarque $4.3$ et du Lemme $4.7$  de \cite{Oka87}. Le point {\it v)} r\'esulte des points {\it i)}, {\it ii)} et de la Proposition \ref{pr:tau12}.
 \end{proof}

\subsection{Preuve du Th\'eor\`eme 1.1.}
Dans cette section, on montre la bijectivit\'e de l'application de Nash pour les hypersurfaces quasi-rationnelles $\B{p}{q}$, ce qui \'equivaut \`a montrer que tous les wedges admissibles se rel\`event \`a la r\'esolution minimale de $\B{p}{q}$ (voir \cite{Reg06}).  
Notre but, dans toute la suite de cette section, est de montrer que pour chaque diviseur essentiel $\E$  ($\E\in \Ess(\B{p}{q})$) tous les $K$-wedges admissibles centr\'es en $N_{\E}$  se rel\`event \`a la r\'esolution minimale de $\B{p}{q}$. De plus, on profite de d\'emontrer quelques r\'esultats qu'on utilise dans toutes les sections de cet article.\\

 Avec le th\'eor\`eme suivant on r\'eduit le nombre de cas \`a \'etudier. Une preuve de ce r\'esultat, dans le cas des singularit\'es de surfaces  rationnelles, se trouve dans \cite{Ple05}.

\begin{theoreme}
\label{th:faremi}
 Soient $V$ une surface alg\'ebrique normale sur  $\KK$ et $\pi:Y\rightarrow V$ la r\'esolution minimale de $V$. Supposons  qu'il existe un morphisme propre et birationnel $\pi':V'\rightarrow V$, o\`u $V'$ est une surface alg\'ebrique normale sur $\KK$, tel que $\pi'$ factorise $\pi$. Alors, si l'application de Nash $\NC_{V}$ associ\'ee \`a  $V$ est bijective,  l'application de Nash $\NC_{V'}$ associ\'ee \`a $V'$  l'est aussi.  
\end{theoreme}
\begin{proof} On remarque que $Y$ est la r\'esolution minimale de $V'$.
Si $\omega$ est un $K$-wedge sur $V'$, alors  $\pi'\circ \omega$ est un $K$-wedge admissible sur $V$. Par cons\'equent,  $\pi'\circ \omega$ se rel\`eve \`a $Y$, d'o\`u le Th\'eor\`eme.
\end{proof}

En vertu des r\'esultats \ref{pr:const}, \ref{pr:Gdes2} et \ref{th:faremi},  on a le corollaire suivant.
\begin{corollaire}
 Si l'application de Nash $\NC_{\B{p}{q}}$ associ\'ee \`a l'hypersurface $\B{p}{q}$ est bijective pour tous les entiers $p>q\geq 3$, premiers entre eux,  alors l'application de Nash  $\NC_{\B{p}{q}}$ est bijective pour tous les entiers  $p\geq 2$ $q\geq 2$, premiers entre eux.
\end{corollaire}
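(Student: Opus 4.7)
My plan is to reduce an arbitrary coprime pair $(p,q)$ with $p,q\geq 2$ to the case $p>q\geq 3$ covered by the hypothesis. The case $q=2$ will be handled separately: $\B{p}{2}$ is an $\AF_{p-1}$ singularity, for which the Nash map is bijective by \cite{Nas95}. The equality $p=q$ is impossible by coprimality together with $p,q\geq 2$, and the case $p>q\geq 3$ is exactly the hypothesis. So the only remaining case is $q\geq 3$ and $2\leq p<q$ with $\gcd(p,q)=1$.

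For such a $(p,q)$, I set $p':=p+q$, so that $p'>q\geq 3$ and $\gcd(p',q)=\gcd(p,q)=1$; by hypothesis, $\NC_{\B{p'}{q}}$ is bijective. The Euclidean division $p'=1\cdot q+p$ has remainder $r=p\geq 2$, so Proposition~\ref{pr:const} applied to $(p',q)$ with $n=1$ produces a proper birational morphism $\sigma_1:S_{\mathcal{C}}\to\B{p'}{q}$ from a normal algebraic surface $S_{\mathcal{C}}$ whose unique singular point has germ isomorphic to $(\B{p}{q},O)$.

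The next step is to invoke Theorem~\ref{th:faremi} with $V=\B{p'}{q}$, $V'=S_{\mathcal{C}}$, and $\pi'=\sigma_1$: this requires $\sigma_1$ to factor through the minimal resolution of $V$. Proposition~\ref{pr:Gdes2} gives a factorization of the good toric resolution $\pi:\SD\to\B{p'}{q}$ through $\sigma_1$, and Proposition~\ref{pr:Gdes1} asserts that this good resolution is minimal precisely when $p'\not\equiv 1\pmod{q}$. Since $p'\equiv p\pmod{q}$ with $2\leq p\leq q-1$, the congruence holds, so $\pi$ itself is the minimal resolution and $\sigma_1$ factors through it. Theorem~\ref{th:faremi} then yields bijectivity of $\NC_{S_{\mathcal{C}}}$; as the Nash map depends only on germs at each singular point and $S_{\mathcal{C}}$ has a unique singular point isomorphic to $(\B{p}{q},O)$, this is equivalent to the bijectivity of $\NC_{\B{p}{q}}$.

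The only delicate point is the arithmetic verification $p'\not\equiv 1\pmod{q}$, which would fail exactly if $p=1$; here the assumption $p\geq 2$ of the corollary is essential, as it ensures that the good toric resolution constructed above already coincides with the minimal resolution of $\B{p'}{q}$, so that no further contraction is needed to bring Theorem~\ref{th:faremi} into play.
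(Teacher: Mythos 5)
Your proposal is correct and follows essentially the same route as the paper, whose proof of this corollary consists precisely in invoking Proposition \ref{pr:const}, Proposition \ref{pr:Gdes2} and Theorem \ref{th:faremi} (after having disposed of $q=2$ via the $\AF_{p-1}$ case, as you do); your only additions are the explicit choice $p'=p+q$ and the minimality check through Proposition \ref{pr:Gdes1}, which is indeed the point where $p\geq 2$ enters. One wording caveat: Theorem \ref{th:faremi} asks that the minimal resolution $\pi$ factor through $\sigma_1$ (i.e.\ $\pi=\sigma_1\circ\pi_0$, exactly what Proposition \ref{pr:Gdes2} provides), so your phrase ``$\sigma_1$ to factor through the minimal resolution'' states the direction backwards, although the rest of your argument uses the correct one.
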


\begin{remarque}
\label{re:p>q}
Dans toute la suite, on suppose que  $p>q\geq 3$.
\end{remarque}

Dans la proposition suivante, $S$ d\'esigne l'hypersurface de la Proposition \ref{pr:sygemi}, $\E$ d\'esigne un diviseur essentiel sur $S$   et  $\alpha_{\E}$ d\'esigne le point g\'en\'erique de $N_{\E}$. On pose 
\begin{center} $(\mu_x,\mu_y,\mu_z):=(\ordt{\com{\alpha_{\E}}}(x),\ordt{\com{\alpha_{\E}}}(y), \ordt{\com{\alpha_{\E}}}(z))\in \ZZ^3_{>0}$.\end{center}
\begin{proposition}
\label{pr:musygemi-g}
 Le vecteur $(\mu_x,\mu_y,\mu_z)$  appartient \`a l'intersection de l'ensemble  $E\GD{g}$ des extr\'emaux de $\GD{g}$ avec le $2$-squelette $S_2\EN{g}$ de l'\'eventail de Newton $\EN{g}$, c'est-\`a-dire  $(\mu_x,\mu_y,\mu_z)\in E\GD{g}\cap S_2\EN{\g}$.
\end{proposition}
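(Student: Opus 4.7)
L'id\'ee est d'identifier le triplet $(\mu_x,\mu_y,\mu_z)$ \`a un vecteur extr\'emal $\rho\in E\GD{g}$ uniquement d\'etermin\'e par $\E$, \`a savoir celui fourni par la Proposition \ref{pr:sygemi}. En effet, $\E$ \'etant essentiel sur $S$ est, en particulier, un diviseur exceptionnel de $\pi':X\rightarrow S$. D'apr\`es le point {\it iii)} de la Proposition \ref{pr:sygemi}, il existe alors un unique $\rho\in E\GD{g}$ tel que $\E\subset \D_{\rho}$, et le point {\it i)} garantit que $\rho\in S_2\EN{g}$. Tout se ram\`enera donc \`a d\'emontrer l'\'egalit\'e $(\mu_x,\mu_y,\mu_z)=\rho$.

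Pour ceci, je choisirai un c\^one r\'egulier $\sigma\in\GD{g}$ de dimension $3$ ayant $\rho$ comme ar\^ete et une base $(\rho,v_2,v_3)$ de $\N$ correspondante. Dans la carte torique affine $U_{\sigma}$, munie des coordonn\'ees locales $(u_1,u_2,u_3)$, le diviseur $\D_{\rho}$ est d\'efini par $u_1=0$, et les coordonn\'ees affines $x$, $y$, $z$ s'expriment comme mon\^omes: $x=u_1^{\rho_1}u_2^{(v_2)_1}u_3^{(v_3)_1}$, et de mani\`ere analogue pour $y$ et $z$. D'apr\`es la Proposition \ref{pr:cr_nor-g}, l'image r\'eciproque $(\pi')^{-1}(S)$ est un diviseur \`a croisements normaux; donc $X$ est transverse \`a $\D_{\rho}$ le long de $\E$, ce qui assure que la restriction de $u_1$ \`a $X$ est une uniformisante de l'anneau local de $X$ au point g\'en\'erique de $\E$, tandis que $u_2$ et $u_3$ y sont inversibles.

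Il restera \`a relever l'arc $\alpha_{\E}$ \`a $X$ par $\pi'$, ce qui est licite puisque, par d\'efinition de $N_{\E}$, le point $\alpha_{\E}$ n'appartient pas \`a $(\sing S)_{\infty}$. Le rel\`evement $\widehat{\alpha_{\E}}$ a pour centre le point g\'en\'erique de $\E$ et, \'etant g\'en\'erique parmi les arcs sur $X$ centr\'es dans $\E$, son ordre de tangence \`a $\E$ vaut $1$: autrement dit $\ordt{(u_1\circ\widehat{\alpha_{\E}})}=1$ et $\ordt{(u_j\circ\widehat{\alpha_{\E}})}=0$ pour $j=2,3$. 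L'\'egalit\'e $\pi'\circ\widehat{\alpha_{\E}}=\alpha_{\E}$ coupl\'ee aux formules monomiales pr\'ec\'edentes donne alors imm\'ediatement $\mu_x=\rho_1$, $\mu_y=\rho_2$, $\mu_z=\rho_3$, ce qui conclut.

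Le point d\'elicat sera la justification rigoureuse que l'ordre de tangence du rel\`evement g\'en\'erique $\widehat{\alpha_{\E}}$ \`a $\E$ est exactement $1$: cela proc\`ede du fait que $N_{\E}$ est l'adh\'erence dans $V_{\infty}$ de l'image par $\pi'_{\infty}$ d'un ouvert dense du cylindre des arcs de $X$ centr\'es dans l'ouvert lisse $\E^0$ de $\E$, et que la condition \emph{avoir un ordre de tangence $\geq 2$ \`a $\E$} d\'ecoupe un ferm\'e propre de ce cylindre.
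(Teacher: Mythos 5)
Votre preuve est essentiellement celle de l'article : identifier $(\mu_x,\mu_y,\mu_z)$ \`a l'unique vecteur extr\'emal $\rho$ fourni par la Proposition \ref{pr:sygemi} tel que $\E\subset \D_{\rho}$, se placer dans une carte torique $U_{\sigma}$ contenant le point g\'en\'erique de $\E$, puis conclure par la transversalit\'e du rel\`evement de l'arc g\'en\'erique de $N_{\E}$ et les expressions monomiales de $x$, $y$, $z$ (votre justification de l'ordre de contact \'egal \`a $1$ est m\^eme plus d\'etaill\'ee que la remarque de l'article). Seule petite nuance : l'inversibilit\'e de $u_2$, $u_3$ au point g\'en\'erique de $\E$ ne d\'ecoule pas de la transversalit\'e \`a croisements normaux en soi, mais de l'unicit\'e du point {\it iii)} de la Proposition \ref{pr:sygemi} (le point g\'en\'erique de $\E$ n'appartient \`a aucun autre $\D_{\rho'}$), ce qui est exactement la remarque faite dans l'article.
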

\begin{proof} Dans la d\'emonstration, on utilise les notations usuelles  de vari\'et\'es toriques (voir \cite{KKMS73}).

Soit  $\E\in \Ess(S)$.  En vertu de la Proposition  \ref{pr:sygemi}, il existe un unique vecteur extr\'emal $\rho_1$  appartenant  \`a
$S_{2}\EN{\g}\cap E\GD{\g}$  tel que $\E\subset \D_{\rho_1}$. Soient
$\rho_2$ et $\rho_3$ deux vecteurs extr\'emaux de $\GD{\g}$  adjacents \`a $\rho_1$, c'est-\`a-dire il existe un c\^one $\sigma\in \GD{\g}$ de dimension $3$ tel que les vecteurs $\rho_1$, $\rho_2$ et $\rho_3$ sont vecteurs extr\'emaux de $\sigma$. On remarque que le point g\'en\'erique de $\E$ n'est pas contenu dans $\D_{\rho_2}$ ou  $\D_{\rho_3}$.\\ 

 Pour un vecteur  $m=(a,b,c)\in \ZZ^3$, on note $\chi^{m}(t_1,t_2,t_3)=t_1^{a}t_2^{b}t_3^{c}$ le caract\`ere associ\'e \`a  $m$. Soient  $U_{\sigma}$ l'ouvert torique  de $X(\GD{\g})$ associ\'e \`a $\sigma$  et  $\chi^{m_i}$ le caract\`ere qui d\'efinit une \'equation de $\D_{\rho_i}\cap U_{\sigma}$,  pour $i\in\{1,2,3\}$. Alors, on a $m_i\cdot \rho_j=\delta_{ij}$, o\`u $\delta_{ij}$ est le symbole de Kronecker. Quitte \`a remplacer les vecteurs $\rho_2$, $\rho_3$, on peut  supposer que  $U_{\sigma}\cap E\neq \emptyset$.\\

On consid\`ere l'unique  rel\`evement  $\widehat{\alpha}_{\E}$ \`a $X$ du point g\'en\'erique $\alpha_{\E}$ de $N_{\E}$ (on rappelle que $X$ est le transform\'e strict de $S$ dans $X(\GD{\g})$).  On remarque que $\widehat{\alpha}_{\E}(0)$ est le point g\'en\'erique de $\E$ et que  le $\KK_{\alpha}$-arc $\widehat{\alpha}_{\E}$ est transverse \`a $\E$, c'est-\`a-dire $\ordt f\circ \widehat{\alpha}_{\E}=1$, o\`u $f$ est une \'equation locale de $\E$.\\

 D'apr\`es la Proposition \ref{pr:cr_nor-g}, le morphisme $\pi':X(\GD{\g})\rightarrow \AF^3_{\KK}$ est une r\'esolution plong\'ee de $S$.  En particulier $X$ est  transverse au diviseur $\D_{\rho_1}$.Comme le $\KK_{\alpha}$-arc $\widehat{\alpha}_{\E}$ est transverse \`a $\E$, le $\KK_{\alpha}$-arc $\widehat{\alpha}_{\E}$ est transverse \`a $\D_{\rho_1}$.  Par cons\'equent, on obtient que  $ m_i\cdot (\mu_x,\mu_y,\mu_z)=\delta_{i 1}$, car   $\widehat{\alpha}_{\E}(0)$ n'est pas contenu dans $\D_{\rho_2}$ ou $\D_{\rho_3}$. Ceci implique que $(\mu_x,\mu_y,\mu_z)=\rho_{1}$, d'o\`u la proposition. \end{proof}

\begin{corollaire}
\label{co:musygemi} On conserve les hypoth\`eses et notations de la Proposition \ref{pr:musygemi-g} et on suppose que $S=\B{p}{q}$. Alors, le vecteur $(\mu_x,\mu_y,\mu_z)$  appartient au syst\`eme g\'en\'erateur minimal du semi-groupe $\tau\cap \ZZ^{3}_{\geq 0}$,
o\`u $\tau$ est le c\^one engendr\'e par les vecteurs  $(0,0,1)$ et $(p,p,q)$. En particulier, on a $\mu_x=\mu_y\leq p$, $\mu_z\leq q$ et  $p\mu_z-q\mu_x\geq 0$.
\end{corollaire}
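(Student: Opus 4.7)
The strategy is to combine Proposition \ref{pr:musygemi-g} with items \emph{ii)} and \emph{v)} of Proposition \ref{pr:sygemi}, and then extract the numerical inequalities from the explicit parametrization of the cone $\tau$.

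First I would argue that $(\mu_x,\mu_y,\mu_z)\in\ZZ^3_{>0}$. By Proposition \ref{pr:musygemi-g}, this vector already lies in $E\GD{\f}\cap S_{2}\EN{\f}$. Since $\E$ is essential, $\E$ is an irreducible exceptional component of the desingularization $\pi:\SD\rightarrow\B{p}{q}$, and by the proof of Proposition \ref{pr:musygemi-g} one has $\E\subset \Dl_{(\mu_x,\mu_y,\mu_z)}\cap\SD$. Proposition \ref{pr:sygemi} \emph{ii)} then forces $(\mu_x,\mu_y,\mu_z)\in\ZZ^{3}_{>0}$. Item \emph{v)} of Proposition \ref{pr:sygemi} now applies: together with $(0,0,1)$, the extremal vectors of $\GD{\f}$ whose associated divisor has exceptional components on $\SD$ form the minimal generator system of the semigroup $\tau\cap\ZZ^3$. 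Because $(\mu_x,\mu_y,\mu_z)$ has all coordinates positive it is distinct from $(0,0,1)$, and hence belongs to this minimal generator system, which proves the main assertion of the corollary.

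For the ``en particulier'' part, I would use the fact that every element of $\tau$ has the form
\[
(\mu_x,\mu_y,\mu_z)=\lambda_{1}(0,0,1)+\lambda_{2}(p,p,q)=(\lambda_2 p,\lambda_2 p,\lambda_1+\lambda_2 q),
\qquad \lambda_1,\lambda_2\geq 0.
\]
Reading off coordinates gives immediately $\mu_x=\mu_y$ and $p\mu_z-q\mu_x=p\lambda_1\geq 0$. To obtain $\mu_x\leq p$ and $\mu_z\leq q$, I would argue by minimality: if $\mu_x>p$ then $(\mu_x-p,\mu_x-p,\mu_z-q)$ still lies in $\tau\cap\ZZ^3_{\geq 0}$ (the inequality $(\mu_z-q)p\geq (\mu_x-p)q$ reduces to $p\mu_z\geq q\mu_x$, which holds), so $(\mu_x,\mu_y,\mu_z)=(\mu_x-p,\mu_x-p,\mu_z-q)+(p,p,q)$ decomposes non-trivially, contradicting minimality. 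Similarly, if $\mu_z>q$, then $\mu_z-q\geq \mu_x q/p$ (since $\mu_x\leq p$), so $(\mu_x,\mu_x,\mu_z-1)\in\tau\cap\ZZ^3$ and $(\mu_x,\mu_y,\mu_z)=(\mu_x,\mu_x,\mu_z-1)+(0,0,1)$ again contradicts minimality.

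The main obstacle is conceptual rather than computational: one has to connect ``$\E$ essential'' with the hypothesis ``exceptional on $\pi$'' needed to apply Proposition \ref{pr:sygemi} \emph{ii)} and \emph{v)}; once this link is made, the rest is a direct manipulation of the two-dimensional slice of $\tau$.
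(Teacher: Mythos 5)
Your argument is correct and is essentially the paper's own: the published proof consists of the single sentence that the corollary follows from Propositions \ref{pr:sygemi} and \ref{pr:musygemi-g}, and you make exactly that deduction (using \emph{ii)}, \emph{iii)} and \emph{v)} of Proposition \ref{pr:sygemi} together with the identification $(\mu_x,\mu_y,\mu_z)=\rho_1$ from the proof of Proposition \ref{pr:musygemi-g}), merely adding the explicit Hilbert-basis computation for the cone $\tau$ that the paper leaves implicit. One small slip in that computation: in the case $\mu_z>q$, the inequality $\mu_z-q\geq \mu_x q/p$ does not follow from $\mu_x\leq p$ (take $p=5$, $q=3$, $(\mu_x,\mu_z)=(5,4)$); but it is also not needed, since from $\mu_z\geq q+1$ and $\mu_x\leq p$ one gets directly $p(\mu_z-1)-q\mu_x\geq pq-qp=0$, so $(\mu_x,\mu_x,\mu_z-1)\in\tau\cap\ZZ^{3}_{\geq 0}$ and your decomposition $(\mu_x,\mu_y,\mu_z)=(\mu_x,\mu_x,\mu_z-1)+(0,0,1)$ still contradicts minimality. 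With that correction the proof is complete.
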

\begin{proof}
 Le corollaire r\'esulte des Propositions  \ref{pr:sygemi} et \ref{pr:musygemi-g}.
\end{proof}

Dans la proposition suivante on suppose que $S$ est une hypersurface normale de $\AF^{3}_{\KK}$, donn\'ee par l'\'equation $\g=0$, o\`u $\g$ est un polyn\^ome irr\'eductible 
non-d\'eg\'en\'er\'e par rapport \`a la fronti\`ere de Newton $\Gamma(\g)$. De plus, on suppose que $O$ est l'unique point singulier de $S$.\\

On consid\`ere un diviseur essentiel $\E$ sur $S$ et un $K$-wedge admissible, $\omega:\spec K[[s,t]]\rightarrow S$, centr\'e en $N_{\E}$. On pose
\begin{center}
$(\eta_x,\eta_y,\eta_z):=(\ordt{\com{\omega}}(x),\ordt{\com{\omega}}(y), \ordt{\com{\omega}}(z)).$ 
\end{center}

On peut \'ecrire le comorphisme de $\omega$ de la fa\c con suivante:
\begin{center}
$\com{\omega}(x)=t^{\eta_x}\chi$; $\com{\omega}(y)=t^{\eta_y}\varphi$; $\com{\omega}(z)=t^{\eta_z}\psi$,
\end{center}
o\`u $\chi, \varphi$ et $\psi$ sont des s\'eries formelles  dans $K[[s,t]]$ qui ne sont pas divisibles par $t$. \\



Maintenant, on donne la proposition cl\'e pour la preuve du th\'eor\`eme \ref{th:nashBP}.
\begin{proposition}
\label{pr:invserBP}
Si les s\'eries formelles $\chi$,$\varphi$ et $\psi$ sont inversibles, alors le  $K$-wedge admissible $\omega$ centr\'e en $N_{\E}$ se rel\`eve \`a la r\'esolution minimale de $S$. 
\end{proposition}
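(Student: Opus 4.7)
Mon plan est de construire explicitement un rel\`evement $\widehat{\omega}:\spec K[[s,t]]\to X(\GD{\f})$ dans un ouvert torique affine bien choisi, puis de v\'erifier qu'il se factorise par le transform\'e strict $\SD$ de $\B{p}{q}$, et enfin de passer \`a la r\'esolution minimale via la Proposition \ref{pr:Gdes1} et le Corollaire \ref{co:Gdes.r=1}.

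La premi\`ere \'etape consiste \`a identifier le vecteur $(\eta_x,\eta_y,\eta_z)$ avec le vecteur $(\mu_x,\mu_y,\mu_z)$ associ\'e au point g\'en\'erique $\alpha_{\E}$ de $N_{\E}$. Comme le centre du wedge est $\alpha_{\E}$, on a $\com{\alpha_{\E}}(x)=\com{\omega}(x)|_{s=0}=t^{\eta_x}\chi(0,t)$, et les s\'eries $\chi,\varphi,\psi$ \'etant suppos\'ees inversibles dans $K[[s,t]]$, leurs restrictions \`a $s=0$ sont inversibles dans $K[[t]]$; elles n'ajoutent donc aucun ordre en $t$, et par cons\'equent $\mu_x=\eta_x$ (et de m\^eme pour $\mu_y$, $\mu_z$). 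D'apr\`es le Corollaire \ref{co:musygemi}, ce vecteur appartient au syst\`eme g\'en\'erateur minimal de $\tau\cap\ZZ^3$, donc est un vecteur extr\'emal de $\GD{\f}$.

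Dans la deuxi\`eme \'etape, je choisirai un c\^one r\'egulier $\sigma\in\GD{\f}$ de dimension $3$ admettant $v_1:=(\mu_x,\mu_y,\mu_z)$ comme ar\^ete; soient $v_2,v_3$ ses deux autres ar\^etes et $\{u_1,u_2,u_3\}\subset M$ la base duale v\'erifiant $u_i\cdot v_j=\delta_{ij}$. L'ouvert torique affine $U_{\sigma}$ a pour coordonn\'ees $t_i:=\chi^{u_i}$, et le morphisme $\pi$ s'y exprime par $x=\prod_i t_i^{v_{i,1}}$, $y=\prod_i t_i^{v_{i,2}}$, $z=\prod_i t_i^{v_{i,3}}$. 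Pour d\'eterminer $\widehat{\omega}$, il suffit d'inverser ce syst\`eme monomial: comme $\sigma$ est r\'egulier, les relations $u_i\cdot v_j=\delta_{ij}$ donnent explicitement
\begin{center}
$\widehat{\omega}^{\star}(t_1)=t\cdot\chi^{u_{1,1}}\varphi^{u_{1,2}}\psi^{u_{1,3}}$, et $\widehat{\omega}^{\star}(t_i)=\chi^{u_{i,1}}\varphi^{u_{i,2}}\psi^{u_{i,3}}$ pour $i\in\{2,3\}$.
\end{center}
C'est ici que l'hypoth\`ese intervient de mani\`ere essentielle: gr\^ace \`a l'inversibilit\'e de $\chi,\varphi,\psi$, les expressions $\chi^{u_{i,j}}$ etc.\ d\'efinissent bien des \'el\'ements de $K[[s,t]]$ m\^eme lorsque les exposants $u_{i,j}$ sont n\'egatifs; sans cette hypoth\`ese le rel\`evement ne pourrait tout simplement pas exister sur $U_{\sigma}$.

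Il restera alors \`a v\'erifier deux points pour conclure. D'abord, que $\widehat{\omega}$ se factorise par $\SD$: cela d\'ecoulera de ce que $\pi$ est un isomorphisme hors du lieu exceptionnel, que le point g\'en\'erique de $\spec K[[s,t]]$ s'envoie hors de $O$, et que $\SD$ est ferm\'e dans $X(\GD{\f})$. Ensuite, pour obtenir un rel\`evement \`a la r\'esolution minimale de $\B{p}{q}$: d'apr\`es la Proposition \ref{pr:Gdes1}, $\SD$ est d\'ej\`a cette r\'esolution minimale lorsque $p\not\equiv 1\mod{q}$; dans le cas contraire il suffit de composer avec la contraction de l'unique diviseur non-essentiel $\E_0$ fournie par le Corollaire \ref{co:Gdes.r=1}. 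L'obstacle principal, somme toute mod\'er\'e vu la force de l'hypoth\`ese, sera de justifier rigoureusement la factorisation par $\SD$ et d'\'ecrire soigneusement le rel\`evement explicite.
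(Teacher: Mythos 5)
Your plan is correct in substance, but it takes a genuinely different route from the paper. The paper's proof never identifies $(\eta_x,\eta_y,\eta_z)$ with $(\mu_x,\mu_y,\mu_z)$ and never uses where this vector sits in the fan: it invokes the toric version of Chow's lemma to produce a subdivision $\Sigma$ of $\GD{\g}$ such that $X(\Sigma)$ is the blowup of $\AF^{3}_{\KK}$ along a monomial ideal $\mathcal{I}$, notes that the invertibility of $\chi,\varphi,\psi$ makes $\omega^{-1}\mathcal{I}\cdot K[[s,t]]$ invertible (each monomial pulls back to a power of $t$ times a unit), and concludes by the universal property of blowing up, the lift then descending to the strict transform and to the minimal resolution. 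You instead deduce $\eta_x=\mu_x$, etc., from the invertibility (this is in fact the last assertion of Proposition \ref{pr:FI-g}, which you could cite directly), use that $(\mu_x,\mu_y,\mu_z)$ is a ray of the regular fan, and write the lift explicitly in the corresponding smooth chart; the computation you indicate is correct (with $u_i\cdot v_j=\delta_{ij}$ and $\eta=\mu=v_1$ one gets $\widehat{\omega}^{\star}(t_1)=t\cdot(\text{unit})$, $\widehat{\omega}^{\star}(t_2),\widehat{\omega}^{\star}(t_3)$ units, and the pullback of $x,y,z$ recovers $\com{\omega}$), and your factorization through $\SD$ does go through, since the generic point of $\spec K[[s,t]]$ maps under $\omega$ to a point of $S$ different from $O$ (because $\com{\omega}(x)\neq 0$) and $\spec K[[s,t]]$ is reduced and irreducible. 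Two adjustments are needed, though: the proposition is stated, and later invoked, for the general nondegenerate hypersurface $S$ (cases $\EE_6$, $\EE_7$, $\DD_n$), so you should cite Proposition \ref{pr:musygemi-g} (which gives $(\mu_x,\mu_y,\mu_z)\in E\GD{\g}$ in general) rather than the $\B{p}{q}$-specific Corollaire \ref{co:musygemi}, and replace the appeal to Proposition \ref{pr:Gdes1} and Corollaire \ref{co:Gdes.r=1} by the general fact that any desingularization of a normal surface factors through its minimal resolution. What the paper's argument buys is precisely the avoidance of these chart and fan considerations; what yours buys is an explicit formula for the lift.
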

 \begin{proof} On consid\`ere une $G$-subdivision r\'eguli\`ere $\GD{\g}$ de l'\'eventail de Newton $\EN{g}$ et on note $\pi':X(\GD{\g})\rightarrow \AF^{3}_{\KK}$ le morphisme torique induit par la subdivision $\GD{\g}$ du c\^one $\RR^{3}_{\geq 0}$.
On note $X$ le transform\'e strict de $S$ dans $X(\GD{\g})$. D'apr\`es la proposition \ref{pr:cr_nor-g}, le morphisme  $\pi':X\rightarrow S$ est une d\'esingularisation de $X$.\\

 En vertu de la {\it version torique du Lemme de Chow} (voir \cite{Sum74}), il existe une subdivision $\Sigma$ d'\'eventail $\GD{\g}$
tel que le morphisme torique $\pi'':X(\Sigma)\rightarrow \AF^3_{\KK}$, (le morphisme $\pi''$ est induit par la subdivision $\Sigma$  du c\^one $\RR^{3}_{\geq 0}$) est un morphisme projectif, la vari\'et\'e $X(\Sigma)$ est quasi-projective et $\pi'$ factorise $\pi''$. Par cons\'equent, il existe un id\'eal monomial  $\mathcal{I}\subset \KK[x,y,z]$ tel que $X(\Sigma)$ est l'\'eclatement de $\AF^3_{\KK}$ de centre l'id\'eal $\mathcal{I}$.\\

Comme les s\'eries formelles $\chi$,$\varphi$ et $\psi$ sont inversibles et  l'id\'eal $\mathcal{I}$ est monomial, l'id\'eal $\omega^{-1}\mathcal{I}\cdot K[[s,t]]$ est inversible. En vertu de la {\it propri\'et\'e universelle de l'\'eclatement}, le morphisme $\omega$ se rel\`eve \`a $X(\Sigma)$. Par cons\'equent $\omega$ se rel\`eve \`a $X$.  Ceci implique que le $K$-wedge $\omega$ se rel\`eve \`a r\'esolution minimale de $S$. \end{proof}

Maintenant, on donne quelques notions et r\'esultats techniques qui nous permettent de montrer, dans le cas $S=\B{p}{q}$, que les s\'eries  formelles $\chi$,$\varphi$ et $\psi$ sont inversibles.\\

Pour  une s\'erie non nulle $\phi:=\sum c_{(e_1,e_2)}s^{e_1}t^{e_2}$, o\`u $c_{(e_1,e_2)} \in K$, on d\'efinit les applications suivantes:\\

\begin{itemize}
 \item[] $\nu:\RR^{2}_{>0}\rightarrow \RR_{\geq 0},v\mapsto \nu_v\phi:=\min\{v\cdot e\mid e \in\mathcal{E}(\phi)\}$, o\`u $ \mathcal{E}(\phi)= \{(e_1,e_2)\mid c_{(e_1,e_2)}\neq 0\}$;\\

\item[] $\pp:\RR^{2}_{>0}\rightarrow K[s,t],v\mapsto\phi_v:=\sum\limits_{{\tiny \mbox{$e\cdot v= \nu_v\phi$}}} c_{(e_1,e_2)}s^{e_1}t^{e_2}$;\\

\item[] $\FI:K[[s,t]]\backslash\{0\}\rightarrow \ZZ_{\geq 0}$, o\`u $\FI(\phi)$ est le nombre de facteurs irr\'eductibles de $\phi$ compt\'es avec multiplicit\'e.
\end{itemize}
\vspde

Un vecteur $v\in \RR^2_{>0}$ d\'efinit une graduation positive sur l'anneau $K[[s,t]]$. Cette graduation  est appel\'ee  $v$-{\it graduation}. Pour une s\'erie formelle $\phi$, le polyn\^ome $\phi_v$  est la partie principale de $\phi$  pour la $v$-graduation.
Le polyn\^ome $\phi_v$ est appel\'e la $v$-{\it partie principale} de $\phi$.\\

Soient $\phi$, $\phi'\in K[[s,t]]$ deux s\'eries formelles non nulles. Les s\'eries formelles $\phi$ et $\phi'$ sont associ\'ees (resp. non associ\'ees)  s'il existe (resp. s'il n'existe pas) une s\'erie formelle $I\in K[[s,t]]$ inversible tel que $\phi=I\phi'$.\\

\begin{proposition}
\label{pr:FI-g} On conserve les hypoth\`eses et notations de la Proposition \ref{pr:invserBP}. Alors,  il existe un vecteur $v\in \QQ^{2}_{>0} $  tel que:

\begin{center}
 
  $\FI(\chi)\leq \degt{\chi_v}= \nu_v\chi =\mu_x-\eta_x$;

 $\FI(\varphi)\leq \degt{\varphi_v}= \nu_v\varphi= \mu_y-\eta_y$;

 $\FI(\psi)\leq \degt{\psi_v}= \nu_v\psi= \mu_z-\eta_z$.
\end{center}
De plus, $\chi$ (resp. $\varphi$, $\psi$) est inversible si et seulement si $\mu_x-\eta_x=0$ (resp.  $\mu_y-\eta_y=0$,  $\mu_z-\eta_z=0$).

\end{proposition}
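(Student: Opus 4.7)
Le plan est de se ramener à un calcul de polygone de Newton suivi du théorème de préparation de Weierstrass, une fois extraite l'information appropriée du fait que le $K$-wedge $\omega$ est centré en $N_{E}$. Comme le point fermé de $\spec K[[s]]$ s'envoie sur $\alpha_{E}$, la réduction modulo $s$ du comorphisme $\com{\omega}$ coïncide (après extension des scalaires) avec $\com{\alpha_{E}}$. Ainsi le coefficient de $s^{0}t^{\mu_{x}}$ dans $\com{\omega}(x)$ est non nul, tandis que celui de $s^{0}t^{j}$ s'annule pour tout $j<\mu_{x}$, et de même pour $y$ et $z$. Joint à la définition $\eta_{x}=\ordt{\com{\omega}(x)}$, ceci force $\eta_{x}\le\mu_{x}$, et après division par $t^{\eta_{x}}$ la série $\chi|_{s=0}\in K[[t]]$ est d'ordre $t$-adique exactement $d_{x}:=\mu_{x}-\eta_{x}$ ; de même $\varphi|_{s=0}$ et $\psi|_{s=0}$ sont d'ordres $d_{y}:=\mu_{y}-\eta_{y}$ et $d_{z}:=\mu_{z}-\eta_{z}$.

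Je poserais alors $v=(N,1)\in\QQ^{2}_{>0}$ avec $N$ strictement supérieur à $\max(d_{x},d_{y},d_{z})$. Pour tout monôme $s^{a}t^{b}$ apparaissant dans $\chi$, le $v$-poids $Na+b$ vaut au moins $N>d_{x}$ lorsque $a\ge 1$ et au moins $d_{x}$ lorsque $a=0$, la borne inférieure $d_{x}$ étant effectivement atteinte à l'unique exposant $(0,d_{x})$ d'après l'étape précédente. Par conséquent $\chi_{v}$ est le monôme $c\,t^{d_{x}}$ avec $c\ne 0$, d'où la chaîne d'égalités $\degt{\chi_{v}}=\nu_{v}\chi=d_{x}=\mu_{x}-\eta_{x}$ ; le raisonnement identique pour $\varphi$ et $\psi$, avec le même $v$, fournit les deux autres chaînes d'égalités.

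Pour la majoration $\FI(\chi)\le d_{x}$, j'appliquerais le théorème de préparation de Weierstrass dans la direction $t$ : puisque $\chi(0,t)$ est d'ordre $t$-adique fini égal à $d_{x}$, on dispose d'une factorisation $\chi=U\cdot P$ avec $U\in K[[s,t]]^{\times}$ et $P\in K[[s]][t]$ un polynôme distingué de degré $d_{x}$ en $t$. Tout facteur irréductible de $P$ est lui-même un polynôme distingué de $t$-degré strictement positif, donc $P$ se factorise en au plus $d_{x}$ facteurs irréductibles et $\FI(\chi)=\FI(P)\le d_{x}$. Le même argument vaut pour $\varphi$ et $\psi$. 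La dernière équivalence se lit alors directement : $\chi$ est inversible dans $K[[s,t]]$ si et seulement si $\chi(0,0)\ne 0$, ssi le coefficient de $t^{\eta_{x}}$ dans $\com{\alpha_{E}}(x)$ est non nul, ssi $\eta_{x}=\mu_{x}$, ssi $\mu_{x}-\eta_{x}=0$ ; et de même pour $\varphi$ et $\psi$.

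L'étape la plus délicate est la détermination exacte $\ordt{\chi|_{s=0}}=d_{x}$ (et non une simple inégalité), qui repose essentiellement sur la condition de centrage de $\omega$ en $N_{E}$ ainsi que sur la définition de $\eta_{x}$ comme $t$-ordre de $\com{\omega}(x)$ ; une fois cette égalité établie, le choix du vecteur $v$ et l'invocation du théorème de préparation de Weierstrass relèvent de vérifications essentiellement mécaniques.
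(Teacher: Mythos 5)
Votre preuve est correcte et suit pour l'essentiel la même stratégie que celle de l'article : l'admissibilité du wedge (son centre est $\alpha_{\E}$) identifie la réduction modulo $s$ de $\com{\omega}$ avec $\com{\alpha_{\E}}$ après extension des scalaires, ce qui donne $\ordt(\chi|_{s=0})=\mu_x-\eta_x$ exactement, puis le choix $v=(N,1)$ avec $N$ assez grand rend les $v$-parties principales monomiales $c\,t^{\mu_x-\eta_x}$, etc. ; c'est précisément l'écriture $(\star)$ et le vecteur $v=(u,1)$, $u$ « assez grand », de l'article, et l'équivalence finale (inversibilité $\Leftrightarrow$ $\mu-\eta=0$) se lit de la même façon. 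La seule divergence réelle porte sur la majoration $\FI\leq \mu-\eta$ : vous l'obtenez par le théorème de préparation de Weierstrass (facteur distingué de $t$-degré $\mu_x-\eta_x$, dont les facteurs irréductibles sont distingués de degré $\geq 1$), alors que l'article la déduit de la multiplicativité des $v$-parties principales, c'est-à-dire de l'inégalité $\FI(\phi)\leq\FI(\phi_v)$ appliquée au monôme $\chi_v=c\,t^{\mu_x-\eta_x}$. Votre variante est plus autonome pour cette proposition, mais l'inégalité $\FI(\phi)\leq\FI(\phi_v)$ et le contrôle des $v$-ordres des facteurs sont réutilisés plus loin (par exemple dans la preuve du Lemme \ref{le:lambda}), ce qui explique le choix de l'article ; les deux arguments sont valides et donnent la même conclusion.
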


\begin{proof} 

Soit $\phi\in K[[s,t]]$ une s\'erie formelle  non nulle et on suppose que 

\begin{center}
 $\phi=I \phi_1^{m_1}\cdots\phi_{n}^{m_n}$, $n\geq 1$,
\end{center}
o\`u  les entiers $m_i$ sont strictement positifs et les $\phi_i$ sont des s\'eries  formelles irr\'eductibles deux \`a deux non associ\'ees. Alors, on a:

\begin{center}
 $\phi_v=((\phi_1)_v)^{m_1}\cdots((\phi_{n})_v)^{m_n}$, pour tout $v\in\RR^{2}_{>0}$.
\end{center}

Par cons\'equent $\FI(\phi)\leq \FI(\phi_v)$ pour tout $v\in \RR^{2}_{>0}$. On remarque que les $(\phi_i)_v$, $1\leq i\leq n$, ne sont pas n\'ecessairement irr\'eductibles.\\

Dans la suite on cherche  un vecteur $v\in \QQ^{2}_{>0} $ tel que  

\begin{center}
 
  $\FI(\chi_v)=\degt{\chi_v}= \nu_v\chi =\mu_x-\eta_x$;

 $\FI(\varphi_v)= \degt{\varphi_v}= \nu_v\varphi= \mu_y-\eta_y$;

 $\FI(\psi_v)=\degt{\psi_v}= \nu_v\psi= \mu_z-\eta_z$.
\end{center}

On rappelle les notations suivantes:\\

\begin{itemize}
 \item[-] $\omega:\spec K[[s,t]]\rightarrow S$ est un $K$-wedge admissible centr\'e en $N_{\E}$;

\item[-] $\alpha_{\E}$ est le point g\'en\'erique de $N_{\E}$ et  $(\mu_x,\mu_y,\mu_z):=(\ordt{\com{\alpha_{\E}}}(x),\ordt{\com{\alpha_{\E}}}(y), \ordt{\com{\alpha_{\E}}}(z))$;

\item[-]  $\com{\omega}(x)=t^{\eta_x}\chi$; $\com{\omega}(y)=t^{\eta_y}\varphi$; $\com{\omega}(z)=t^{\eta_z}\psi$,
o\`u $\chi, \varphi$ et $\psi$ sont des s\'eries formelles  dans $K[[s,t]]$ qui ne sont pas divisibles par $t$.\\
\end{itemize}

On remarque qu'on peut \'ecrire le comorphisme du $K$-wedge $\omega$ de la fa\c con suivante:
\begin{center}
$ (\star) \left \{\begin{array}{ccc}
\omega^{\star}(x) & =&\displaystyle \sum \limits_{\eta_x\leq i<\mu_x} a_i s^{l_i}t^i+\displaystyle \sum \limits_{\mu_x\leq i} a_it^i;\\
 
\omega^{\star}(y)&=&\displaystyle \sum \limits_{\eta_y\leq j<\mu_y} b_js^{m_j}t^j+\displaystyle \sum \limits_{\mu_y\leq j} b_jt^j;\\
 
\omega^{\star}(z)&=&\displaystyle \sum \limits_{\eta_z\leq k<\mu_z} c_ks^{n_k}t^k+\displaystyle \sum \limits_{\mu_x\leq k} c_kt^k,
\end{array} \right . $
\end{center}
o\`u les exposants  $l_i$ (resp. $ m_j,n_k $)  sont strictement positifs et les s\'eries formelles  $a_{i}, b_{j}, c_{k} \in K[[s]]$ sont inversibles pour $(i,j,k)\in \{(\eta_x,\eta_y,\eta_z),(\mu_x,\mu_y,\mu_z)\}$ et inversibles ou nulles pour $\eta_x< i< \mu_{x}$, $\eta_y<j < \mu_{y}$, $\eta_z<k< \mu_{z}$.
En effet, soit $\lambda_0:\spec K[[t]]\rightarrow \spec K[[s,t]]$ le morphisme induit par l'homomorphisme canonique $K[[s,t]]\rightarrow K[[s,t]]/(s)=K[[t]]$.  On pose $\alpha:=\omega\circ \lambda_0$. 
Comme  $\omega$ est un $K$-wedge admissible centr\'e en $N_{\E}$ et d'apr\`es la propri\'et\'e fonctorielle de l'espace d'arcs 
$S_{\infty}$, on a  $\alpha=\alpha_{\E}\circ \lambda_1$, o\`u $\lambda_1:\spec K[[s,t]]\rightarrow \KK_{\alpha_{\E}}[[s,t]]$ est un morphisme induit par une inclusion  $\KK_{\alpha_{\E}}\hookrightarrow K$.
Comme $\alpha=\alpha_{\E}\circ \lambda_1$, on a $(\ordt{\com{\alpha}}(x),\ordt{\com{\alpha}}(y), \ordt{\com{\alpha}}(z))=(\mu_x,\mu_y,\mu_z)$, d'o\`u les s\'eries formelles ($\star$).\\

 Soit $v=(u,1)\in \QQ^{2}_{>0}$. Si $u$ est ``assez grand'', alors   $\chi_v=at^{\mu_x-\eta_x}$,   $\varphi_v=b t^{\mu_y-\eta_y}$  et $\psi_v=ct^{\mu_z-\eta_z}$, o\`u $a$, (resp. $b$ , $c$) est le terme constant de la s\'erie formelle inversible  $a_{\mu_x}$ (resp.   $b_{\mu_y}$,   $c_{\mu_z}$). Ceci ach\`eve la preuve de la Proposition \ref{pr:FI-g}.\\

On remarque que les s\'erie formelle $\chi$ (resp. $\varphi$, $\psi$) est inversible si et seulement si $\chi_v\in K\backslash \{0\}$ (resp. $\varphi_v\in K\backslash \{0\}$, $\phi_v\in K\backslash \{0\}$).  Par cons\'equent $\chi$ (resp. $\varphi$, $\psi$)
est inversible si et seulement si  $\mu_x-\eta_x=0$ (resp.  $\mu_y-\eta_y=0$,  $\mu_z-\eta_z=0$).
\end{proof}

Dans la proposition suivante, on consid\`ere l'hypersurface $\B{p}{q}$ et on majore, en termes des entiers $p$ et $q$, le nombre des facteurs irr\'eductibles compt\'es avec multiplicit\'e des s\'eries formelles $\chi$, $\varphi$ et $\psi$ qui sont  associ\'ees au  $K$-wedge admissible  $\omega:\spec K[[s,t]]\rightarrow \B{p}{q}$  centr\'e en $N_{\E}$.

\begin{proposition}
\label{pr:mu-eta_BP}
 On conserve les hypoth\`eses et notations des Propositions \ref{pr:invserBP} et \ref{pr:FI-g}. De plus, on suppose que $S=\B{p}{q}$.
Alors, on a:
\begin{center}
 $\mu_x-\eta_x\leq p-1$,  $\mu_y-\eta_y\leq p-1$ et  $\mu_z-\eta_z\leq q-1$.
\end{center}
En particulier, on a  $\FI(\chi)\leq p-1$, $\FI(\varphi)\leq p-1$ et $\FI(\psi)\leq q-1$.
\end{proposition}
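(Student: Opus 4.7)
The plan is to combine Corollary~\ref{co:musygemi}, which bounds the $\mu$'s by $p$ and $q$, with the admissibility of the wedge $\omega$, which forces $\eta_x, \eta_y, \eta_z \geq 1$. The bounds on $\mu-\eta$ then follow trivially, and the bounds on $\FI$ by a direct application of Proposition~\ref{pr:FI-g}.

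Concretely, Corollary~\ref{co:musygemi} already asserts that $\mu_x=\mu_y\leq p$ and $\mu_z\leq q$, since $(\mu_x,\mu_y,\mu_z)$ lies in the minimal generating system of $\tau\cap\ZZ^3_{\geq 0}$ with $\tau$ the cone spanned by $(0,0,1)$ and $(p,p,q)$. So the remaining work is to show that $\eta_x, \eta_y, \eta_z \geq 1$.

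For this I would invoke admissibility: the generic arc of $\omega$, i.e.\ the $K((s))$-arc obtained from $\omega$ by base change to the generic point of $\spec K[[s]]$, belongs to $\B{p}{q}_{\infty}^{s}$, and since the singular locus of $\B{p}{q}$ reduces to $\{O\}$, this generic arc is centered at $O$. In terms of the comorphism, this means that $\omega^{\star}(x)$, $\omega^{\star}(y)$ and $\omega^{\star}(z)$ have strictly positive $t$-order in $K((s))[[t]]$. Now $\chi$, $\varphi$ and $\psi$ are not divisible by $t$ in $K[[s,t]]$, so their reductions modulo $t$ are nonzero elements of $K[[s]]\subset K((s))$; consequently the $t$-order of $\omega^{\star}(x)=t^{\eta_x}\chi$ viewed in $K((s))[[t]]$ is exactly $\eta_x$, and likewise the $t$-orders of $\omega^{\star}(y)$ and $\omega^{\star}(z)$ equal $\eta_y$ and $\eta_z$. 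Therefore $\eta_x, \eta_y, \eta_z \geq 1$.

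Combining the two steps yields $\mu_x-\eta_x\leq p-1$, $\mu_y-\eta_y\leq p-1$ and $\mu_z-\eta_z\leq q-1$. Finally, Proposition~\ref{pr:FI-g} produces $\FI(\chi)\leq\mu_x-\eta_x$, $\FI(\varphi)\leq\mu_y-\eta_y$ and $\FI(\psi)\leq\mu_z-\eta_z$, giving the asserted bounds $\FI(\chi)\leq p-1$, $\FI(\varphi)\leq p-1$, $\FI(\psi)\leq q-1$. No serious obstacle is expected; the only mildly delicate point is the identification of the $t$-order of $\omega^{\star}(x)$ viewed successively in $K[[s,t]]$ and in $K((s))[[t]]$, and this follows at once from the fact that $\chi$ is not divisible by $t$ in $K[[s,t]]$.
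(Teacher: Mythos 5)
Your proof is correct and follows essentially the same route as the paper: bound the $\mu$'s via Corollaire~\ref{co:musygemi}, get $\eta_x,\eta_y,\eta_z\geq 1$ from admissibility of the wedge (generic arc lying in $\B{p}{q}^{s}_{\infty}$, hence centered at the origin), and conclude the $\FI$ bounds from Proposition~\ref{pr:FI-g}. The only difference is that you spell out in more detail the identification of the $t$-order of $\omega^{\star}(x)$ with $\eta_x$, which the paper leaves implicit.
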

\begin{proof}
D'apr\`es la Proposition \ref{pr:FI-g}, si $\mu_x-\eta_x\leq p-1$,  $\mu_y-\eta_y\leq p-1$ et  $\mu_z-\eta_z\leq q-1$, alors
$\FI(\chi)\leq p-1$, $\FI(\varphi)\leq p-1$ et $\FI(\psi)\leq q-1$.\\

En vertu du Corollaire \ref{co:musygemi}, le vecteur $(\mu_x,\mu_y,\mu_z)$  appartient au syst\`eme g\'en\'erateur minimal du semi-groupe $\tau\cap \ZZ^{3}_{\geq 0}$, o\`u $\tau$ est le c\^one engendr\'e par les vecteurs  $(0,0,1)$ et $(p,p,q)$. Par cons\'equent, on a 
$\mu_x\leq p$,  $\mu_y\leq p$ et $\mu_z\leq q$.

Comme  $\omega$ est un $K$-wedge admissible centr\'e en $N_{\E}$, l'arc g\'en\'erique du $K$-wedge $\omega$ appartient \`a $\B{p}{q}^{s}_{\infty}$. Par cons\'equent, on obtient que $\eta_x\geq 1$, $\eta_y\geq 1$
et $\eta_z\geq 1$, d'o\`u   $\mu_x-\eta_x\leq p-1$,  $\mu_y-\eta_y\leq p-1$ et  $\mu_z-\eta_z\leq q-1$. 
\end{proof}

Dans la proposition suivante, $S$ est l'hypersurface de la Proposition \ref{pr:invserBP}, c'est-\`a-dire $S$ est une hypersurface normale de $\AF^{3}_{\KK}$  ayant $O$ comme unique point singulier et qui est donn\'ee par l'\'equation $\g=0$, o\`u  $\g$ est un polyn\^ome irr\'eductible  non-d\'eg\'en\'er\'e par rapport \`a la fronti\`ere de Newton $\Gamma(\g)$.\\

On consid\`ere un diviseur essentiel $\E$ sur $S$ et un $K$-wedge  $\omega:\spec K[[s,t]]\rightarrow S$ admissible centr\'e en $N_{\E}$. On note
\begin{center}
$(\eta_x,\eta_y,\eta_z):=(\ordt{\com{\omega}}(x),\ordt{\com{\omega}}(y), \ordt{\com{\omega}}(z)).$ 
\end{center}

\begin{proposition}
 \label{pr:etacone-g}
Le vecteur $(\eta_x,\eta_y,\eta_z)$ appartient \`a l'intersection de $\ZZ_{>0}^3$ avec le  $2$-squelette $S_2\EN{g}$ de l'\'eventail $\EN{\g}$.
\end{proposition}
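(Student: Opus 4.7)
The plan is to verify the two claims — positivity of $(\eta_x,\eta_y,\eta_z)$ and its membership in $S_{2}\EN{g}$ — from admissibility of $\omega$ together with the normalization that $\chi, \varphi, \psi$ are not divisible by $t$.

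For positivity, I would use that the generic arc of $\omega$ (the $K((s))$-point of $S_{\infty}$ associated to $\omega$) belongs to $\pinf^{-1}(\{O\})$ by admissibility. Hence the value at $t=0$ of each of $\com{\omega}(x), \com{\omega}(y), \com{\omega}(z)$, viewed in $K((s))$, must vanish. Since $\chi, \varphi, \psi$ are not divisible by $t$, their $t$-constant terms are nonzero elements of $K[[s]]\subset K((s))$, so these vanishings force $\eta_x, \eta_y, \eta_z\geq 1$.

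To obtain $v := (\eta_x,\eta_y,\eta_z) \in S_2\EN{g}$, I would argue by contradiction. Expanding the identity $g(\com{\omega}(x),\com{\omega}(y),\com{\omega}(z))=0$ in $K[[s]][[t]]$ gives
\[
\sum_{e\in\mathcal{E}(g)} c_{e}\, t^{v\cdot e}\, \chi^{e_1}\varphi^{e_2}\psi^{e_3} \;=\; 0.
\]
Set $d := \min_{e\in\mathcal{E}(g)} v\cdot e$ and collect the coefficient of $t^{d}$; since the $t$-constant terms of $\chi,\varphi,\psi$ are nonzero in $K[[s]]$, this reads $g_v(\chi(s,0),\varphi(s,0),\psi(s,0)) = 0$ in the integral domain $K[[s]]$. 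If $v$ lay in the interior of a $3$-dimensional cone of $\EN{g}$, the minimum of $v\cdot e$ on $\mathcal{E}(g)$ would be attained at a unique vertex $e_0=(a,b,c)$, forcing $g_v = c_{e_0}\,x^ay^bz^c$ to be a single monomial; the above identity would then become $c_{e_0}\chi(s,0)^a\varphi(s,0)^b\psi(s,0)^c = 0$, a contradiction since all three factors are nonzero. The only delicate point of the argument is this extraction of the lowest-$t$-order coefficient, and it works precisely thanks to the normalization on $\chi,\varphi,\psi$; the rest is a direct consequence of the defining property of the Newton fan, namely that interior points of a $3$-cone correspond to monomial initial forms.
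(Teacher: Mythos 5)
Your proof is correct, and the part on positivity is the same as the paper's: admissibility forces the generic arc of $\omega$ to be centred at the unique singular point $O$, so $t$ divides each of $\com{\omega}(x)$, $\com{\omega}(y)$, $\com{\omega}(z)$, i.e.\ $(\eta_x,\eta_y,\eta_z)\in\ZZ^3_{>0}$. For membership in $S_{2}\EN{\g}$, however, your route is genuinely more direct than the paper's. The paper never extracts the bottom $t$-coefficient; since its initial-form operator $\pp$ is only defined for weights in $\RR^{2}_{>0}$, it introduces, for every $u>0$, the vector $\nu_{(u,1)}\omega\in\RR^{3}_{>0}$ of $(u,1)$-orders of $\com{\omega}(x),\com{\omega}(y),\com{\omega}(z)$, shows by the same mechanism you use (a monomial initial form of $\g$ cannot annihilate the nonzero $(u,1)$-initial parts, hence the initial form $\g_u$ is not a monomial) that each $\nu_{(u,1)}\omega$ lies in $S_{2}\EN{\g}$, and then lets $u\to 0$, using $\lim_{u\to 0}\nu_{(u,1)}\omega=(\eta_x,\eta_y,\eta_z)$ together with compactness of a truncation $K_n$ of the $2$-skeleton to pass to the limit. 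You replace this one-parameter family and the limit/compactness step by a single extraction of the coefficient of $t^{d}$, $d=\min_{e\in\mathcal{E}(\g)}v\cdot e$, in the identity $\g(\com{\omega}(x),\com{\omega}(y),\com{\omega}(z))=0$ viewed in $K[[s]][[t]]$, which gives the vanishing of the $v$-initial form of $\g$ evaluated at $(\chi(s,0),\varphi(s,0),\psi(s,0))$ in the domain $K[[s]]$, with $v=(\eta_x,\eta_y,\eta_z)$; since a point interior to a $3$-dimensional cone of $\EN{\g}$ (the dual fan of $\Gamma_{+}(\g)$, by its construction as the normalized blow-up of $\mathcal{I}(\g)$) has a monomial initial form attained at a single vertex of $\Gamma_{+}(\g)$, this vanishing forces $v\in S_{2}\EN{\g}$. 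Both arguments rest on the same duality fact about the Newton fan; yours is shorter and avoids the limiting argument entirely, at the harmless cost of allowing the "initial data" to live in $K[[s]]$ rather than in the polynomial initial-form formalism the paper set up (and reuses) in Proposition \ref{pr:FI-g}.
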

\begin{proof}
Comme $\omega$ est un $K$-wedge admissible centr\'e en $N_{\E}$, l'arc g\'en\'erique de $\omega$ appartient \`a $S_{\infty}^s$. Par cons\'equent,  $(\eta_x,\eta_y,\eta_z)$ appartient \`a $\ZZ_{>0}^3$. Il suffit donc de montrer que $(\eta_x,\eta_y,\eta_z)\in S_2\EN{\g}$.\\

 Pour un r\'eel  $u\in \RR$, on d\'efinit le vecteur suivant:
\begin{center}
$\nu_{(u,1)}\omega =(\nu_{(u,1)}\omega^{\star}(x), \nu_{(u,1)}\omega^{\star}(y), \nu_{(u,1)}\omega^{\star}(y))\in \RR^3_{>0}$.
\end{center}
Ce vecteur d\'efinit une graduation sur l'anneau $\KK[x,y,z]$. Soit $\g_u\in \KK[x,y,z]$ la partie principale de  $\g$, par rapport \`a cette graduation.

Le $K$-wedge $\omega$ doit satisfaire l'\'equation $\g=0$, c'est-\`a-dire on a \begin{center}  $\g(\omega^{\star}(x),\omega^{\star}(y),\omega^{\star}(z))=0$, \end{center} ce qui implique que \begin{center}$\g_u((\omega^{\star}(x))_{(u,1)},(\omega^{\star}(x))_{(u,1)},(\omega^{\star}(x))_{(u,1)})=0$. \end{center}

Par cons\'equent, $\g_{u}$ n'est pas un mon\^ome. Ceci implique que le vecteur  $\nu_{(u,1)}\omega$ appartient au $2$-squelette $S^{2}\EN{g}$. 

On remarque que  $\displaystyle\lim_{u\rightarrow 0} \nu_{(u,1)}\omega=(\eta_x,\eta_y,\eta_z)$. Soit $n> >0$ un entier  ``assez  grand'' tel que  $\eta_x < n,\;\; \eta_y<n,\;\;\eta_z<n$. Il existe alors un r\'eel  $u_0>0$ tel que \begin{center} $\nu_{(u,1)}\omega \in K_n:=S_{2}\EN{\g}\cap \{(\lambda_1,\lambda_2,\lambda_3)\in \RR_{\geq 0}^3\mid\;  \lambda_j\leq n\;\mbox{pour}\; j\in \{1,2,3\}\}$, \end{center} pour tout $u\leq u_0$. Comme $K_n$ est compact, on a $ (\eta_x,\eta_y,\eta_z)\in S_{2}\EN{f}$. 
\end{proof}

\begin{corollaire} \label{co:etacone}
 On conserve les hypoth\`eses et notations des Propositions \ref{pr:FI-g} et \ref{pr:etacone-g}. De plus, on suppose que $S=\B{p}{q}$.
Alors, le vecteur $(\eta_x,\eta_y,\eta_z)$  appartient au  semi-groupe $\tau\cap \ZZ^{3}_{\geq 0}$,
o\`u $\tau$ est le c\^one engendr\'e par les vecteurs  $(0,0,1)$ et $(p,p,q)$. En particulier $\eta_x=\eta_y$ et $p\eta_z-q\eta_x\geq 0$.
De plus, si les s\'eries formelles $\chi$,$\varphi$ et $\psi$ ne sont pas simultan\'ement inversibles, alors on a $p\eta_z-q\eta_x> 0$.
\end{corollaire}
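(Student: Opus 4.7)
Mon plan est de partir de la Proposition \ref{pr:etacone-g}, qui place déjà $(\eta_x,\eta_y,\eta_z)$ dans le $2$-squelette $S_2\EN{\f}\cap \ZZ^{3}_{>0}$, et de raffiner cette information en confrontant deux types de bornes sur les ordres : la borne arithmétique supérieure $\mu_x=\mu_y\leq p$ du Corollaire \ref{co:musygemi} et une inégalité de semi-continuité $\mu_i\geq \eta_i$ pour $i\in\{x,y,z\}$, provenant de la spécialisation $\omega\circ \lambda_0=\alpha_{\E}\circ \lambda_1$ déjà utilisée dans la preuve de la Proposition \ref{pr:FI-g}. Je commencerais par décrire explicitement le $2$-squelette : en lisant la Figure \ref{fi:EN}, pour $\f=z^p+\hq{x}{y}$, l'ensemble $S_2\EN{\f}\cap \ZZ^{3}_{>0}$ est la réunion des trois cônes bidimensionnels $\tau_1$, $\tau_2$ et $\tau_3=\tau$, tous adjacents au rayon engendré par $(p,p,q)$ (les trois autres $2$-cônes de $\EN{\f}$ sont contenus dans les plans de coordonnées et ne rencontrent pas $\ZZ^{3}_{>0}$).

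Le cœur de la démonstration, et à mon avis sa seule étape non routinière, consistera à exclure $\tau_1\setminus \tau$ et $\tau_2\setminus \tau$. Un vecteur strictement positif de $\tau_1$ s'écrit $(\lambda+\mu p,\mu p,\mu q)$ avec $\mu\geq 1$ entier et $\lambda\geq 0$ entier; s'il est hors de $\tau$, alors $\lambda\geq 1$ et $\eta_x>\eta_y=\mu p$. D'un côté, la structure $(\star)$ des coefficients du comorphisme de $\omega$ rappelée dans la preuve de la Proposition \ref{pr:FI-g} donne $\mu_y\geq \eta_y=\mu p$; de l'autre, le Corollaire \ref{co:musygemi} fournit $\mu_y=\mu_x\leq p$. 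On déduira d'abord $\mu=1$, puis $\eta_x\geq p+1$, et enfin $\mu_x\geq \eta_x\geq p+1>p$, ce qui contredira $\mu_x\leq p$. Le cas $\tau_2\setminus \tau$ sera symétrique, d'où $(\eta_x,\eta_y,\eta_z)\in \tau$, ce qui entraînera immédiatement $\eta_x=\eta_y$ et $p\eta_z-q\eta_x\geq 0$ par simple lecture de la description paramétrique de $\tau$.

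Pour la dernière assertion, je procéderais par contraposée : si $p\eta_z-q\eta_x=0$, alors $(\eta_x,\eta_y,\eta_z)$ est sur la demi-droite engendrée par $(p,p,q)$, de la forme $(\mu p,\mu p,\mu q)$. Le même raisonnement de confrontation des bornes forcera $\mu=1$, puis les inégalités $\mu_x\leq p$ et $\mu_z\leq q$, combinées à la semi-continuité $\mu_i\geq \eta_i$, donneront $(\mu_x,\mu_y,\mu_z)=(p,p,q)=(\eta_x,\eta_y,\eta_z)$. La Proposition \ref{pr:FI-g} permettra alors de conclure que $\chi$, $\varphi$ et $\psi$ sont simultanément inversibles, ce qui établira la dernière assertion par contraposition.
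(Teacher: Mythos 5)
Votre démonstration est correcte et suit essentiellement la même démarche que la preuve de l'article : localisation de $(\eta_x,\eta_y,\eta_z)$ dans le $2$-squelette par la Proposition \ref{pr:etacone-g}, exclusion de $\tau_1$ et $\tau_2$ en confrontant la paramétrisation $(\lambda+\mu p,\mu p,\mu q)$ — qui repose sur la régularité de ces cônes, c'est-à-dire la Proposition \ref{pr:tau12} que vous utilisez implicitement et devriez citer — aux bornes $\eta_i\leq\mu_i$ et $(\mu_x,\mu_y,\mu_z)\leq(p,p,q)$ issues du Corollaire \ref{co:musygemi}, puis conclusion de la dernière assertion par contraposée via la Proposition \ref{pr:FI-g}. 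La seule différence est de présentation (votre rédaction est plus détaillée que celle, elliptique, de l'article), outre le léger conflit de notation entre votre entier $\mu$ et les ordres $\mu_x,\mu_y,\mu_z$.
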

\begin{proof} Soit $S_2\RR^{3}_{\geq 0}$ le $2$-squelette du  c\^one $\RR^{3}_{\geq 0}$. On rappelle que $\tau_1\in \EN{\f}$ (resp. $\tau_2\in \EN{\f}$, $\tau_3\in \EN{\f}$) est le c\^one engendr\'e par les vecteurs $(1,0,0)$ (rep. $(0,1,0)$, $(0,0,1)$) et $(p,p,q)$ (voir la Figure \ref{fi:EN}). Remarquons que $S_2\EN{\f}=\bigcup_{i=1}^{3}\tau_i\cup S_2\RR^{3}_{\geq 0}$.
En vertu de la Proposition \ref{pr:etacone-g}, on a $(\eta_x,\eta_y,\eta_z)\in S_2\EN{\f}\cap \ZZ_{>0}^{3}$. Ce qui implique que le vecteur $(\eta_x,\eta_y,\eta_z)$ appartient \`a l'ensemble $\bigcup_{i=1}^{3}\tau_i$.\\

D'apr\`es la Proposition \ref{pr:tau12}, le c\^one $\tau_1$ (resp. $\tau_{3}$) est r\'egulier. Par cons\'equent, le semi-groupe
 $\tau_1\cap \ZZ^3$ (resp.  $\tau_2\cap \ZZ^3$) est engendr\'e  par les vecteurs $(1,0,0)$ (resp. $(0,1,0)$) et $(p,p,q)$. En particulier, si $(a,b,c)\in \tau_1\cap \ZZ_{>0}^3$ (resp. $(a,b,c)\in \tau_2\cap \ZZ_{>0}^3$), alors $p\leq a$, $p\leq b$ et $q\leq c$.\\

On rappelle que le vecteur $(\mu_x,\mu_y,\mu_z)$ appartient au syst\`eme g\'en\'erateur minimal du semi-groupe $\tau_3\cap \ZZ^3$ (voir le Corollaire \ref{co:musygemi}). Par cons\'equent,  on a $\mu_x \leq p$, $\mu_y\leq p$ et $\mu_z\leq q$. Comme 
$\eta_x\leq \mu_x\leq p$, $\eta_y\leq \mu_y\leq p$ et $\eta_z\leq \mu_z\leq p$, on obtient que  le vecteur $(\eta_x,\eta_y,\eta_z)$ appartient au c\^one  $\tau=\tau_3$.\\

Si les s\'eries formelles $\chi$,$\varphi$ et $\psi$ ne sont pas simultan\'ement inversibles, alors $p\eta_z-q\eta_x\neq 0$,
car si $p\eta_z-q\eta_x=0$, alors $(\eta_x,\eta_y,\eta_z)=(\mu_x,\mu_y,\mu_z)=(p,p,q)$, d'o\`u les s\'eries formelles $\chi$, $\varphi$ et $\psi$ sont inversibles (voir la Proposition \ref{pr:FI-g}).\end{proof}
Dans toute la suite de cette section, on se restreint au cas des hypersurfaces $\B{p}{q}$, o\`u les entiers $p>q\geq 3$ (voir la Remarque \ref{re:p>q}) sont premiers entre eux, c'est-\`a-dire, dans toute la suite, on a:\\

\begin{itemize}
\item[-] $\E$ est un diviseur essentiel de $\B{p}{q}$, $p>q\geq 3$;
 \item[-] $\omega:\spec K[[s,t]]\rightarrow \B{p}{q}$ est un $K$-wedge admissible centr\'e en $N_{\E}$;

\item[-] $\alpha_{\E}$ est le point g\'en\'erique de $N_{\E}$ et  $(\mu_x,\mu_y,\mu_z):=(\ordt{\com{\alpha_{\E}}}(x),\ordt{\com{\alpha_{\E}}}(y), \ordt{\com{\alpha_{\E}}}(z))$;

\item[-]  $\com{\omega}(x)=t^{\eta_x}\chi$; $\com{\omega}(y)=t^{\eta_y}\varphi$; $\com{\omega}(z)=t^{\eta_z}\psi$,
o\`u $\chi, \varphi$ et $\psi$ sont des s\'eries formelles  dans $K[[s,t]]$ qui ne sont pas divisibles par $t$.\\
\end{itemize}

Soit $\Gamma_{(\mu_x,\mu_z)}$ (resp. $\Gamma_{(p,q)}$) l'enveloppe convexe de  $\tau'\cap\ZZ_{> 0}^2$, o\`u $\tau'$ est le c\^one engendr\'e par $(0,1)$ et $(\mu_x,\mu_z)$ (resp. $(0,1)$ et $(p,q)$). La figure suivante donne une id\'ee intuitive de la forme du poly\`edre $\Gamma_{(\mu_x,\mu_z)}$.

\begin{figure}[!h]

  \begin{tikzpicture}[scale=1]

\tikzfading[name=fade gamma,
left color=transparent!35, 
right color=transparent!35,top color=transparent!35, bottom color=transparent!5]
\tikzfading[name=fade cone,
left color=transparent!60, 
right color=transparent!60,top color=transparent!80, bottom color=transparent!90]

  \draw[style=help lines,step=0.5cm,color=gray!50] (0,0) grid (8.5,3);

  \draw[->] (-0.2,0) -- (8.7,0) node[right]{}; 
  \draw[->] (0,-0.2) -- (0,3.2) node[above]{};
 \
\draw[shift={(0.5,0)}] (0pt,2pt) -- (0pt,-2pt) node[below] {$1$};
 \draw[shift={(1,0)}] (0pt,2pt) -- (0pt,-2pt) node[below] {$2$}; 

\draw[shift={(4,0)}] (0pt,2pt) -- (0pt,-2pt) node[below] {$\mu_x$};

 \foreach \x in {1.5, 2,...,3.5} \draw[shift={(\x,0)}] (0pt,2pt) -- (0pt,-2pt) node[below] {$\cdot$};
\foreach \x in {4.5, 5,...,8} \draw[shift={(\x,0)}] (0pt,2pt) -- (0pt,-2pt) node[below] {$\cdot$};
\draw[shift={(8.5,0)}] (0pt,2pt) -- (0pt,-2pt) node[below] {$p$}; 

\draw[shift={(0,0.5)}] (2pt,0pt) -- (-2pt,0pt) node[left] {$1$};
 \draw[shift={(0,1)}] (2pt,0pt) -- (-2pt,0pt) node[left] {$\cdot$};
 \draw[shift={(0,2)}] (2pt,0pt) -- (-2pt,0pt) node[left] {$\cdot$};
 \draw[shift={(0,1.5)}] (2pt,0pt) -- (-2pt,0pt) node[left] {$\mu_z$};
  
    \draw[shift={(0,2.5)}] (2pt,0pt) -- (-2pt,0pt) node[left] {$\cdot$};

\draw[shift={(0,3)}] (2pt,0pt) -- (-2pt,0pt) node[left] {$q$}; 
 
\draw[gray] (0,0) -- (4,1.5);
\draw[very thick] (4,1.5) -- (8,3);
\draw[very thick] (0,0.5) -- (1,0.5);
\draw[very thick] (1,0.5) -- (4,1.5);
\fill[color=gray!100,path fading=fade gamma]  (4,1.5) -- (8,3)--(0,3) --(0,0.5) -- (1,0.5);
\fill[color=gray!30,path fading=fade cone] (0,0)--(0,0.5)--(1,0.5)-- (4,1.5);
 \draw[fill=black] (4,1.5) circle(0.4mm) (0,0.5) circle(0.4mm) (1,0.5) circle(0.4mm)  (0,0.5) circle(0.4mm);
\draw (2,2) node[inner sep=-1pt,below=-1pt] {{ \Large $\Gamma_{(\mu_x,\mu_z)}$}};
\end{tikzpicture}
\caption{Poly\`edre $\Gamma_{(\mu_x,\mu_z)}$}
\label{fi:Gamma}
\end{figure}
\begin{proposition}
\label{pr:Gammapentpos} 
 Soient $(a_1,b_1)$ et $(a_2,b_2)$, $a_1\leq a_2$, les coordonn\'es des sommets d'une face compacte du poly\`edre  $\Gamma_{(\mu_x,\mu_z)}$  et $L$ la droite qui joint les points  $(a_1,b_1)$ et $(a_2,b_2)$. Alors la pente de la droite $L$ est positive et strictement plus petite que $\frac{q}{p}$.
De plus  $\Gamma_{(\mu_x,\mu_z)}=\tau'\cap \Gamma_{(p,q)}$, o\`u  $\tau'$ est le c\^one engendr\'e par les vecteurs $(0,1)$ et $(\mu_x,\mu_z)$.
\end{proposition}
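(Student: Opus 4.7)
Mon plan repose sur l'observation-cl\'e du Corollaire~\ref{co:musygemi}: le vecteur $(\mu_x,\mu_z)$ appartient au syst\`eme g\'en\'erateur minimal de $\tau\cap\ZZ^3$ projet\'e au plan $(x,z)$, de sorte que $(\mu_x,\mu_z)$ est un sommet du bord inf\'erieur du poly\`edre $\Gamma_{(p,q)}$; par ailleurs, l'in\'egalit\'e $p\mu_z-q\mu_x\geq 0$ \'equivaut \`a l'inclusion $\tau'\subset\tau''$ (o\`u $\tau''$ est le c\^one associ\'e \`a $\Gamma_{(p,q)}$). L'id\'ee est alors de d\'ecrire $\Gamma_{(\mu_x,\mu_z)}$ comme la partie de $\Gamma_{(p,q)}$ situ\'ee \`a gauche du sommet $(\mu_x,\mu_z)$, compl\'et\'ee de la demi-droite de direction $(\mu_x,\mu_z)$.

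Je traiterai d'abord la deuxi\`eme assertion. L'inclusion $\Gamma_{(\mu_x,\mu_z)}\subset\tau'\cap\Gamma_{(p,q)}$ est imm\'ediate puisque $\tau'\cap\ZZ_{>0}^2\subset\tau''\cap\ZZ_{>0}^2$ et que $\tau'$ est convexe. Pour l'inclusion r\'eciproque, l'\'etape d\'ecisive sera de montrer que tout sommet $V_i=(a_i,b_i)$ du bord inf\'erieur de $\Gamma_{(p,q)}$ avec $a_i\leq\mu_x$ appartient \`a $\tau'$, c'est-\`a-dire v\'erifie $b_i\geq(\mu_z/\mu_x)a_i$. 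Ceci d\'ecoulera de la convexit\'e du bord et du fait que la pente \`a gauche en $(\mu_x,\mu_z)$ dans $\Gamma_{(p,q)}$ est major\'ee par la pente asymptotique $q/p$, elle-m\^eme $\leq\mu_z/\mu_x$: le graphe du bord reste alors au-dessus de la droite $b=(\mu_z/\mu_x)a$ sur l'intervalle $[0,\mu_x]$ (en \'egalit\'e seulement en $(\mu_x,\mu_z)$). Les sommets $V_0=(0,1),V_1,\ldots,V_k=(\mu_x,\mu_z)$ seront alors dans $\tau'\cap\ZZ_{>0}^2$, donc des sommets de $\Gamma_{(\mu_x,\mu_z)}$, dont le bord inf\'erieur co\"incide avec celui de $\Gamma_{(p,q)}$ sur $[0,\mu_x]$ puis suit la demi-droite de direction $(\mu_x,\mu_z)$ (ar\^ete du c\^one de recession $\tau'$).

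La premi\`ere assertion s'ensuivra: les faces compactes de $\Gamma_{(\mu_x,\mu_z)}$ sont pr\'ecis\'ement les segments $[V_{i-1},V_i]$ pour $1\leq i\leq k$, et leurs pentes $s_i$ forment une suite croissante par convexit\'e. Comme $V_0=(0,1)$ et $V_1$ a ordonn\'ee $\geq 1$, on a $s_1\geq 0$, et par monotonie toutes les pentes $s_i$ sont positives. La pente $s_k$ du dernier segment co\"incide avec la pente \`a gauche au sommet $(\mu_x,\mu_z)$ dans $\Gamma_{(p,q)}$, strictement major\'ee par la pente \`a droite en ce sommet (la rupture stricte d\'efinissant un sommet), pente \`a droite elle-m\^eme $\leq q/p$. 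L'obstacle principal tient au cas-limite $(\mu_x,\mu_z)=(p,q)$, o\`u la pente \`a droite vaut exactement $q/p$ (la demi-droite asymptotique); il faudra alors invoquer que $(p,q)$ est effectivement un sommet du bord de $\Gamma_{(p,q)}$, ce qui garantit l'in\'egalit\'e stricte $s_k<q/p$.
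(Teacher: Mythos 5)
Le point faible de votre proposition est l'affirmation selon laquelle $(\mu_x,\mu_z)$, \'etant un g\'en\'erateur minimal, serait un \emph{sommet} du bord de $\Gamma_{(p,q)}$ : c'est faux en g\'en\'eral, et toute votre in\'egalit\'e stricte $s_k<\frac{q}{p}$ repose sur la rupture stricte des pentes en ce point. Un g\'en\'erateur minimal du semi-groupe $\tau''\cap\ZZ^2$ est seulement situ\'e \emph{sur} une face compacte de $\Gamma_{(p,q)}$, \'eventuellement dans son int\'erieur relatif. Exemple concret : pour $(p,q)=(10,3)$, le vecteur $(2,2,1)$ appartient au syst\`eme g\'en\'erateur minimal de $\tau_3\cap\ZZ^3$ et correspond \`a un diviseur essentiel (seul $\E_0$ peut ne pas \^etre essentiel, cf. Corollaire \ref{co:Gdes.r=1}) ; or la face compacte de hauteur $1$ de $\Gamma_{(10,3)}$ va (selon la convention adopt\'ee pour la colonne $a=0$) de $(0,1)$ ou $(1,1)$ jusqu'\`a $(3,1)$ et contient $(\mu_x,\mu_z)=(2,1)$ dans son int\'erieur relatif ; le m\^eme ph\'enom\`ene se produit d\`es que $p>2q$. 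En un tel point il n'y a pas de rupture : pente \`a gauche et pente \`a droite co\"incident, votre cha\^ine (pente \`a gauche $<$ pente \`a droite $\le q/p$) ne donne que $s_k\le q/p$, et l'in\'egalit\'e stricte --- celle dont la Proposition \ref{pr:minGamma} et le Lemme \ref{le:lambda} ont pr\'ecis\'ement besoin --- n'est pas \'etablie. De plus, m\^eme dans votre cas-limite $(\mu_x,\mu_z)=(p,q)$, le fait que $(p,q)$ soit un sommet est seulement invoqu\'e, pas d\'emontr\'e.

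Ce qui manque est exactement l'argument-cl\'e du texte, que votre plan ne reproduit nulle part : si une face compacte de $\Gamma_{(p,q)}$ avait une pente $\ge q/p$, son sommet sup\'erieur $(a_2,b_2)$ se d\'ecomposerait en $(a_1,b_1)+(a_2-a_1,b_2-b_1)$ avec $(a_2-a_1,b_2-b_1)\in\tau''\cap\ZZ^2$ non nul, ce qui contredit le fait (vrai, et c'est le sens utile de la correspondance sommets/g\'en\'erateurs) qu'un sommet d'une face compacte est un g\'en\'erateur minimal de $\tau''\cap\ZZ^2$. Une fois cette borne stricte acquise pour $\Gamma_{(p,q)}$, votre architecture --- \'etablir d'abord l'\'egalit\'e $\Gamma_{(\mu_x,\mu_z)}=\tau'\cap\Gamma_{(p,q)}$ par l'estimation de sous-gradient, pour laquelle il suffit d'ailleurs que $(\mu_x,\mu_z)$ soit sur le bord de $\Gamma_{(p,q)}$ (appartenance aux faces compactes, pas besoin d'\^etre un sommet), puis transf\'erer les pentes --- fonctionne, et n'est au fond que la preuve du texte prise dans l'ordre inverse. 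Sans cet argument, votre r\'edaction comporte un trou r\'eel au seul endroit d\'elicat de l'\'enonc\'e.
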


\begin{proof} 
D'abord on suppose que $(\mu_x,\mu_z)=(p,q)$.
On note $\Gamma=\Gamma_{(p,q)}$.
Comme $p>q$, le vecteur $(1,1)$ appartient au syst\`eme g\'en\'erateur minimal du semi-groupe  $\tau''\cap \ZZ^{2}_{\geq 0}$, o\`u $\tau''$ est le c\^one engendr\'e par $(0,1)$ et $(p,q)$ (voir la Figure \ref{fi:Gamma}). En particulier le vecteur  $(1,1)$ appartient \`a une face compacte du poly\`edre $\Gamma$. Comme la pente de la droite qui joint les points $(0,1)$ et $(1,1)$ est nulle et $1< q$, on obtient que la pente de la droite $L$ est un nombre r\'eel positif, car $\Gamma$ est convexe.

En raisonnant par l'absurde, on suppose que \begin{center} $\dfrac{b_2-b_1}{a_2-a_1}\geq \dfrac{q}{p}$, \end{center}
d'o\`u $p(b_2-b_1)-q(a_2-a_1)\geq 0$. Ceci implique que le vecteur $(a_2-a_1,b_2-b_1)$ appartient au c\^one  $\tau''$. Or $(a_2,b_2)= (a_2-a_1,b_2-b_1)+(a_1,b_1)$, d'o\`u une contradiction, car  le vecteur $(a_2,b_2)$ appartient  au syst\`eme g\'en\'erateur minimal du semi-groupe  $\tau''\cap \ZZ^{2}_{\geq 0}$.\\

On a $\Gamma_{(\mu_x,\mu_z)}=\tau'\cap \Gamma_{(p,q)}$, car la pente de la droite qui joint les points $(0,0)$ et $(\mu_x,\mu_z)$ est plus grande que $\frac{q}{p}$ et les pentes des droites  qui d\'efinissent les faces compactes de $\Gamma_{(p,q)}$ sont  strictement plus petites que  $\frac{q}{p}$.  \end{proof}

On d\'efinit l'application suivante:
\begin{center}
 $\m:\Gamma_{(\mu_x,\mu_z)} \rightarrow \RR, (u,v)\mapsto pv-qu$.
\end{center}

\begin{proposition}\label{pr:minGamma} Il existe un vecteur  $(u_0,v_0)\in \Gamma_{(\mu_x,\mu_z)}$ tel que
\begin{center}
 $\m(u_0,v_0)=\inf\{\m(u,v)\mid (u,v)\in \Gamma_{(\mu_x,\mu_z)}\}$.
\end{center}
 De plus on a:
\begin{itemize}
 \item[i)] si $(\mu_x,\mu_z)=(p,q)$, alors  $(u_0,v_0)$ appartient au rayon engendr\'e par le  vecteur $(p,q)$;
\item[ii)] si $(\mu_x,\mu_z)\neq(p,q)$, alors  $(u_0,v_0)=(\mu_x,\mu_z)$.
\end{itemize}

\end{proposition}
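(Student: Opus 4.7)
The plan is to treat Proposition \ref{pr:minGamma} as a one-variable linear-programming problem: minimise the linear form $\m(u,v) = pv - qu$ over the closed unbounded polyhedron $\Gamma_{(\mu_x,\mu_z)}$, whose recession cone is $\tau' = \RR_{\geq 0}(0,1) + \RR_{\geq 0}(\mu_x,\mu_z)$. First I would check that the infimum is finite by evaluating $\m$ on the two generators of $\tau'$: we have $\m(0,1) = p > 0$ and $\m(\mu_x,\mu_z) = p\mu_z - q\mu_x \geq 0$ by Corollaire \ref{co:musygemi}, so $\m$ is non-decreasing in every recession direction. This forces the infimum to be finite and attained on the compact part of the boundary.

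For case (i), where $(\mu_x,\mu_z) = (p,q)$, the form $\m$ vanishes identically on the ray generated by $(p,q)$, and the lattice point $(p,q)$ itself belongs to $\Gamma_{(p,q)}$ since $p,q\geq 3$; so the infimum equals $0$ and is attained at every point of this ray lying in the polyhedron. For case (ii), where $(\mu_x,\mu_z) \neq (p,q)$, I would first observe that necessarily $p\mu_z - q\mu_x > 0$ strictly, because equality would force $(\mu_x,\mu_z)$ to be a positive multiple of $(p,q)$, and the primitivity of $(\mu_x,\mu_z)$ (inherited from its role as an element of the minimal generating set of the semigroup $\tau \cap \ZZ^3$ given by Corollaire \ref{co:musygemi}) would then give $(\mu_x,\mu_z) = (p,q)$. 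Consequently $\m$ is strictly increasing along both recession directions, so the minimum is attained on the compact boundary. By Proposition \ref{pr:Gammapentpos}, each compact edge joining $(a_1,b_1)$ to $(a_2,b_2)$ with $a_1 < a_2$ has slope $\theta \in [0,q/p)$, and the direct calculation $\m(a_2,b_2) - \m(a_1,b_1) = (a_2 - a_1)(p\theta - q) < 0$ shows that $\m$ strictly decreases along the compact boundary as $u$ increases. Thus the minimum on the compact boundary is reached at its rightmost vertex.

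The one point that needs care is identifying this rightmost vertex as $(\mu_x,\mu_z)$ itself. It sits on the extreme ray of $\tau'$ in direction $(\mu_x,\mu_z)$, and because $(\mu_x,\mu_z)$ is primitive it is the smallest strictly positive lattice point on that ray, hence a vertex of $\Gamma_{(\mu_x,\mu_z)}$; no compact edge can extend further to the right, since that would force a vertex with larger $u$-coordinate on the same extreme ray below $(\mu_x,\mu_z)$, contradicting primitivity. This yields $(u_0,v_0) = (\mu_x,\mu_z)$ in case (ii). I do not foresee any significant obstacle beyond the small slope/primitivity verifications just outlined; everything else reduces to the standard fact that a linear form whose restriction to the recession cone is non-negative attains its minimum on a polyhedron at a vertex.
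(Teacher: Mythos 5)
Your proposal is correct and follows essentially the same route as the paper: minimise the linear form $\m$ over the polyhedron, use Proposition \ref{pr:Gammapentpos} to see that $\m$ strictly decreases from one vertex to the next along the compact boundary (the paper's $c_i<c_{i-1}$), and conclude that the minimum sits at $(\mu_x,\mu_z)$ in case (ii) and on the ray through $(p,q)$ in case (i). The differences are only cosmetic: you handle finiteness and attainment via the recession cone instead of the paper's sweeping lines $L_c$, and you make explicit two points the paper leaves implicit (that $p\mu_z-q\mu_x>0$ when $(\mu_x,\mu_z)\neq(p,q)$, and that $(\mu_x,\mu_z)$ is the last vertex of the compact boundary, both via primitivity of $(\mu_x,\mu_z)$ coming from Corollaire \ref{co:musygemi}).
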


\begin{proof}Le vecteur $(\mu_x,\mu_z)$ \'etant fix\'e, on note $\Gamma=\Gamma_{(\mu_x,\mu_z)}$.

   G\'eom\'etriquement, lorsque $c$ cro\^it depuis $-\infty$, les droites $L_c:=\{(u,v)\in \RR^2\mid\;(-q,p)\cdot (u,v)=c\}$  finissent par toucher le bord du poly\`edre $\Gamma$ en un point $(u_0,v_0)$. On remarque que $\m(u_0,v_0)=\inf\{\m(u,v)\mid (u,v)\in \Gamma\}$ et que l'ensemble $M:=\{(u,v)\in \Gamma \mid \m(u,v)=\m(u_0,v_0)\}$ est une face  ou un sommet de $\Gamma$.\\

On rappelle que pour tout vecteur $(u,v)\in \Gamma$ on a $pv-qu\geq 0$.
Si  $(\mu_x,\mu_z)=(p,q)$, alors $L_0\cap \Gamma$  est la face non compacte engendr\'ee par le vecteur 
$(p,q)$, d'o\`u le point $i)$ de la proposition.
Maintenant, on suppose que  $(\mu_x,\mu_z)\neq(p,q)$. La pente de la droite $L'$ qui joint les points $(0,0)$ et $(\mu_x,\mu_z)$  est  strictement  plus grande que $\frac{q}{p}$, donc toute droite $L_c$ pour $c\in \RR$ intersecte en exactement  un point la droite $L'$. Par cons\'equent, le couple $(u_0,v_0)$ appartient \`a une face compacte de $\Gamma$.  D'apr\`es la Proposition \ref{pr:Gammapentpos}, si $L$ est une droite engendr\'ee par une face de $\Gamma$, alors la pente de $L$ est strictement plus petite
que $\frac{q}{p}$. Comme la pente des droites $L_c$, $c\in \RR$, est $\frac{q}{p}$, on obtient que l'ensemble $M$  est un sommet de $\Gamma$.

 Soit $n_{\Gamma}$ le nombre de faces compactes de $\Gamma$. On rappelle que $\Gamma$ est l'enveloppe convexe de  $\tau'\cap\ZZ^{2}_{>0}$, o\`u $\tau'$ est le c\^one engendr\'e par les vecteurs $(0,1)$ et $(\mu_x,\mu_z)$.
Alors, il existe deux suites d'entiers
\begin{center}
$ 0=a_0<a_1\cdots <a_{n_{\Gamma}}=\mu_x$ et 
$ 1=b_0<b_1\cdots <b_{n_{\Gamma}}=\mu_z$,
\end{center}
tels que les couples $(a_i,b_i)$, $0\leq i\leq n_{\Gamma}$, sont les coordonn\'ees des sommets cons\'ecutifs de $\Gamma$.

Pour  $1\leq i\leq n_{\Gamma}$, on note $L_i$ la droite qui joint  les points  $(a_{i-1},b_{i-1})$ et $(a_i,b_i)$ et posons $c_i$ (resp $c_0$) le r\'eel tel que la droite  $L_{c_i}$ (resp. $L_{c_0}$) intersecte la droite $L_i$ (resp. $L_1$) en le point $(a_i,b_i)$ (resp. $(0,1)$).

  En vertu de la Proposition \ref{pr:Gammapentpos},  on a  $c_{i}<c_{i-1}$, pour tout $1\leq i\leq n_{\Gamma}$ (la pente de la droite $L_{i}$, $0\leq i\leq n_{\Gamma}$,  est positive et strictement plus petite que $\frac{q}{p}$).  Comme $\m(a_i,b_i)=c_i$, pour tout $0\leq i \leq n_{\Gamma}$,  on obtient que $\m(\mu_x,\mu_z)<\m(a_i,b_i)$, pour tout $0\leq i \leq n_{\Gamma}-1$. Ceci ach\`eve la preuve de la proposition. 
\end{proof}

\begin{proposition}\label{pr:etaGamma}
Le vecteur $(\eta_x,\eta_z)$  appartient au poly\`edre  $\Gamma_{(\mu_x,\mu_z)}$.
\end{proposition}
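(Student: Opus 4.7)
The statement reduces to a single numerical inequality. Since $\Gamma_{(\mu_x,\mu_z)}$ is by definition the convex hull of $\tau'\cap\ZZ^2_{>0}$, it contains every positive lattice point of $\tau'$. As $(\eta_x,\eta_z)\in\ZZ^2_{>0}$ by Corollary \ref{co:etacone}, the statement reduces to showing $(\eta_x,\eta_z)\in\tau'$, that is,
\begin{equation*}
\eta_z\mu_x \geq \eta_x\mu_z.
\end{equation*}

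If $\chi,\varphi,\psi$ are simultaneously invertible, Proposition \ref{pr:FI-g} gives $(\eta_x,\eta_z)=(\mu_x,\mu_z)$ and the inequality is trivial. In the remaining cases, by the last part of Corollary \ref{co:etacone}, $K := p\eta_z - q\eta_x > 0$, and dividing the wedge equation by $t^{q\eta_x}$ yields $h_q(\chi,\varphi) = -t^K\psi^p$ in $K[[s,t]]$. Vanishing of the $t^0$-coefficient of the right-hand side, combined with the explicit shape of $\chi(s,0) = s^{l_{\eta_x}}a_{\eta_x}(s)$ and $\varphi(s,0) = s^{m_{\eta_y}}b_{\eta_y}(s)$ from the representation $(\star)$ (with $a_{\eta_x}, b_{\eta_y}$ units) and the factorisation $h_q = \prod_{i=1}^q (\alpha_i X + \beta_i Y)$, forces $l_{\eta_x}=m_{\eta_y}$ and singles out a unique root $i_0$ of $h_q$ with $a_{\eta_x}(s) = \lambda\,b_{\eta_y}(s)$, where $\lambda := -\beta_{i_0}/\alpha_{i_0}$. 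The same $\lambda$ governs the generic arc: $(\tilde\chi(0):\tilde\varphi(0))=(\lambda:1)$.

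I would then study the distinguished factor $F := \alpha_{i_0}\chi + \beta_{i_0}\varphi$ carefully. On the one hand, its $t$-order is exactly $K$, all other factors being units in $K[[s,t]]$; writing $F = t^K g$ and comparing $t^K$-coefficients in $h_q(\chi,\varphi) = -t^K\psi^p$, then taking $s$-valuations, yields an identity
\begin{equation*}
\operatorname{ord}_s\!\bigl(g(s,0)\bigr) + (q-1)\,l_{\eta_x} \;=\; p\,n_{\eta_z},
\end{equation*}
relating the $s$-Newton polygon data of $\omega^\star(x), \omega^\star(y), \omega^\star(z)$. On the other hand, specialising $F$ at $s=0$ and using the corresponding analysis for $\alpha_E$ (where the distinguished factor acquires $t$-order $p\mu_z - q\mu_x$) gives, by semi-continuity of the $t$-adic valuation,
\begin{equation*}
K \;\leq\; (\mu_x - \eta_x) + (p\mu_z - q\mu_x).
\end{equation*}
An iteration of the $t^j$-analysis for $1 \leq j < K$ propagates the identifications $l_{\eta_x+j} = m_{\eta_y+j}$ and $a_{\eta_x+j}(s) = \lambda\,b_{\eta_y+j}(s)$. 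Combining these with the two preceding displayed estimates and with the fact, via Corollary \ref{co:musygemi}, that $(\mu_x,\mu_z)$ belongs to the minimal generator system of $\tau\cap\ZZ^3$, yields the sharper bound $\mu_x(\mu_z - \eta_z) \leq \mu_z(\mu_x - \eta_x)$, equivalent to the required $\eta_z\mu_x \geq \eta_x\mu_z$.

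The main obstacle is precisely this final refinement. Semi-continuity on its own produces only $p(\mu_z - \eta_z) \geq (q-1)(\mu_x - \eta_x)$, which is strictly weaker than the slope inequality needed. The extra input must come from the $s$-Newton polygon identity above together with the rigidity imposed by the distinctness of the roots of $h_q$, which together pin the $s$- and $t$-valuations of $\omega^\star(x)-\lambda\omega^\star(y)$ and $\omega^\star(z)$ to one another.
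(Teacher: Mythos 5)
Your reduction is the right one: since $\Gamma_{(\mu_x,\mu_z)}$ contains every point of $\tau'\cap\ZZ^2_{>0}$ and $(\eta_x,\eta_z)\in\ZZ^2_{>0}$, everything comes down to the single inequality $\eta_z\mu_x\geq \eta_x\mu_z$. But you never prove that inequality. Your sketch builds a long chain (distinguished factor $F$, extraction of $t^K$-coefficients, $s$-valuations, semi-continuity, an ``iteration of the $t^j$-analysis'') whose conclusion, the bound $\mu_x(\mu_z-\eta_z)\leq\mu_z(\mu_x-\eta_x)$, is exactly the statement to be proved, and you concede yourself that what the chain actually delivers ($p(\mu_z-\eta_z)\geq(q-1)(\mu_x-\eta_x)$, from semi-continuity) is strictly weaker, with the ``extra input'' left unspecified. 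That is the gap, and it sits at the decisive step. There are also unjustified intermediate claims along the way: for instance the factors $\alpha_i\chi+\beta_i\varphi$ with $i\neq i_0$ are only known not to be divisible by $t$; they need not be units of $K[[s,t]]$, so the valuation identity you extract from the $t^K$-coefficient does not follow as stated.

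The missing idea is that no further analysis of the wedge equation is needed: the inequality is already a consequence of results you quote. Corollary \ref{co:etacone} gives $p\eta_z-q\eta_x\geq 0$, i.e.\ $(\eta_x,\eta_z)\in\tau''\cap\ZZ^2_{>0}\subset\Gamma_{(p,q)}$; moreover $\eta_x\leq\mu_x$ and $\eta_z\leq\mu_z$ (from the representation $(\star)$ in Proposition \ref{pr:FI-g}). By Proposition \ref{pr:Gammapentpos}, $\Gamma_{(\mu_x,\mu_z)}=\tau'\cap\Gamma_{(p,q)}$ and every compact face of $\Gamma_{(p,q)}$ has slope strictly less than $q/p\leq\mu_z/\mu_x$, while $(\mu_x,\mu_z)$ is a vertex of $\Gamma_{(p,q)}$ (Corollary \ref{co:musygemi}). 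Hence a point of $\Gamma_{(p,q)}$ with first coordinate at most $\mu_x$ lies on or above the line through $(\mu_x,\mu_z)$ of slope $\mu_z/\mu_x$; equivalently, $\Gamma_{(p,q)}$ does not meet the region between the ray through $(\mu_x,\mu_z)$ and the horizontal axis inside the strip $0<u\leq\mu_x$ (the triangle $\Omega$ of the paper). Therefore $(\eta_x,\eta_z)\in\tau'$ and so $(\eta_x,\eta_z)\in\Gamma_{(\mu_x,\mu_z)}$. This purely convex-geometric argument is the paper's proof; your proposed coefficientwise study of $\hq{\chi}{\varphi}=-t^K\psi^p$ is both incomplete and unnecessary here (that kind of analysis is what the paper reserves for Lemma \ref{le:pgcdninv} and Proposition \ref{pr:NashBP}).
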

\begin{proof} 
En vertu du Corollaire \ref{co:etacone} le vecteur $(\eta_x,\eta_z)$ appartient au c\^one $\tau''$ engendr\'e par les vecteurs $(0,1)$ et $(p,q)$. Par cons\'equent, le vecteur  $(\eta_x,\eta_z)$ appartient  au poly\`edre $\Gamma_{(p,q)}$.

En raisonnant par l'absurde si le vecteur $(\eta_x,\eta_z)$  n'appartient pas au poly\`edre $\Gamma_{(\mu_x,\mu_z)}$, alors ce vecteur  appartient \`a l'intersection $\Omega$ du c\^one $\tau''$ et l'int\'erieur du triangle d\'efinit par les vecteurs $(0,0)$, $(\mu_x,\mu_z)$ et $(\mu_x,0)$ car $\eta_x\leq \mu_x$, $\eta_z\leq \mu_z$ et $\Gamma_{(\mu_x,\mu_z)}$ est l'enveloppe convexe de  $\tau'\cap\ZZ_{> 0}^2$, o\`u $\tau'$ est le c\^one engendr\'e par les vecteurs $(0,1)$ et $(\mu_x,\mu_z)$. Mais d'apr\`es la Proposition \ref{pr:Gammapentpos}, l'intersection du  poly\`edre $\Gamma_{(p,q)}$ et l'ensemble $\Omega$ est vide,  d'o\`u une contradiction. \end{proof}
On rappelle qu'on veut montrer que les s\'eries formelles $\chi$, $\varphi$ et $\psi$ sont inversibles (Proposition \ref{pr:invserBP}). Le r\'esultat suivant est une r\'eduction du probl\`eme.
\begin{proposition}
\label{pr:etaGammapsi}
 Si $\chi$ ou $\varphi$ est une s\'erie formelle inversible, alors les s\'eries formelles   $\chi$, $\varphi$ et $\psi$ sont inversibles 
\end{proposition}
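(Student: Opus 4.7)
La strat\'egie consiste \`a d\'emontrer d'abord que $\chi$ et $\varphi$ sont simultan\'ement inversibles, puis \`a exploiter l'ind\'ecomposabilit\'e du vecteur $(\mu_x,\mu_y,\mu_z)$ dans le semi-groupe associ\'e au c\^one $\tau$ engendr\'e par $(0,0,1)$ et $(p,p,q)$, pour en d\'eduire que $\psi$ est \'egalement inversible.

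Pour la premi\`ere \'etape, on combinera le Corollaire \ref{co:musygemi} (qui donne $\mu_x = \mu_y$) et le Corollaire \ref{co:etacone} (qui donne $\eta_x = \eta_y$); il en r\'esulte l'\'egalit\'e $\mu_x-\eta_x = \mu_y-\eta_y$. D'apr\`es la derni\`ere assertion de la Proposition \ref{pr:FI-g}, on en d\'eduira imm\'ediatement que $\chi$ est inversible si et seulement si $\varphi$ l'est. Sous l'hypoth\`ese de la proposition, les deux s\'eries seront donc inversibles.

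L'\'etape cruciale consistera \`a d\'eduire que $\psi$ l'est aussi. On proc\`edera par l'absurde en supposant $\mu_z - \eta_z > 0$ (on rappelle que $\mu_z-\eta_z \geq 0$ par la Proposition \ref{pr:FI-g}). Comme $\eta_x = \mu_x$ et $\eta_y = \mu_y$, on \'ecrira alors
\[
(\mu_x,\mu_y,\mu_z) = (\eta_x,\eta_y,\eta_z) + (\mu_z-\eta_z)(0,0,1).
\]
Dans cette d\'ecomposition, les deux termes appartiennent \`a $\tau \cap \ZZ^3_{\geq 0}$ et sont non nuls: le premier se trouve m\^eme dans $\tau \cap \ZZ^3_{>0}$ par le Corollaire \ref{co:etacone}, et le second est un multiple positif strict du vecteur extr\'emal $(0,0,1)$ de $\tau$. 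Ceci contredira l'appartenance de $(\mu_x,\mu_y,\mu_z)$ au syst\`eme g\'en\'erateur minimal du semi-groupe $\tau \cap \ZZ^3_{\geq 0}$ \'etablie dans le Corollaire \ref{co:musygemi}. On aura donc $\mu_z = \eta_z$, et la Proposition \ref{pr:FI-g} permettra de conclure que $\psi$ est inversible.

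La difficult\'e principale attendue vient de ce qu'aucun lien direct n'appara\^it entre $\chi$, $\varphi$ d'une part et $\psi$ d'autre part \`a partir de la seule \'equation $z^p+\hq{x}{y}=0$ d\'efinissant $\B{p}{q}$. Ce point sera surmont\'e gr\^ace \`a la rigidit\'e combinatoire du vecteur $(\mu_x,\mu_y,\mu_z)$: son ind\'ecomposabilit\'e force, d\`es que deux de ses coordonn\'ees co\"incident avec celles de $(\eta_x,\eta_y,\eta_z)$, l'\'egalit\'e de la troisi\`eme.
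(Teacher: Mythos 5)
Votre preuve est correcte. La premi\`ere \'etape ($\chi$ inversible si et seulement si $\varphi$ l'est, via $\mu_x=\mu_y$, $\eta_x=\eta_y$ et la Proposition \ref{pr:FI-g}) est identique \`a celle de l'article. Pour l'\'etape cl\'e $\eta_z=\mu_z$, vous suivez en revanche un chemin diff\'erent~: l'article compare les valeurs de $\m(u,v)=pv-qu$ sur le poly\`edre $\Gamma_{(\mu_x,\mu_z)}$ en invoquant les Propositions \ref{pr:minGamma} et \ref{pr:etaGamma} (le minimum n'est atteint qu'au sommet $(\mu_x,\mu_z)$ --- ou sur le rayon de $(p,q)$ --- et $(\eta_x,\eta_z)$ appartient \`a $\Gamma_{(\mu_x,\mu_z)}$), alors que vous n'utilisez que l'ind\'ecomposabilit\'e de $(\mu_x,\mu_y,\mu_z)$ dans le semi-groupe $\tau\cap\ZZ^3$~: la d\'ecomposition $(\mu_x,\mu_y,\mu_z)=(\eta_x,\eta_y,\eta_z)+(\mu_z-\eta_z)(0,0,1)$, dont les deux termes sont non nuls et appartiennent \`a $\tau\cap\ZZ^3$ (Corollaire \ref{co:etacone} pour le premier), contredit le Corollaire \ref{co:musygemi}. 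Cet argument est valide --- le semi-groupe \'etant point\'e, un \'el\'ement du syst\`eme g\'en\'erateur minimal ne peut s'\'ecrire comme somme de deux \'el\'ements non nuls du semi-groupe, fait standard que vous laissez implicite mais que l'article emploie lui-m\^eme dans la preuve de la Proposition \ref{pr:Gammapentpos} --- et il est plus \'economique, puisqu'il \'evite le recours au poly\`edre $\Gamma_{(\mu_x,\mu_z)}$ et \`a la fonction $\m$; en contrepartie, la preuve de l'article obtient l'\'egalit\'e gratuitement \`a partir d'outils qui lui sont de toute fa\c con n\'ecessaires pour la Proposition \ref{pr:NashBP}.
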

\begin{proof}
 D'apr\`es les r\'esultats \ref{co:musygemi},  \ref{pr:FI-g} et \ref{co:etacone}, on obtient que la s\'erie $\chi$ est inversible si et seulement si  $\varphi$ est inversible. Supposons que $\chi$ soit inversible, on a donc  $\mu_x=\eta_x$ (Proposition \ref{pr:FI-g}).  Comme on a $\eta_z \leq \mu_z$ et $\eta_x=\mu_x$, on a  $p\eta_z-q\eta_x\leq p\mu_z-q\mu_x$. En vertu des  Propositions \ref{pr:minGamma} et  \ref{pr:etaGamma}, on obtient que $\mu_z=\eta_z$. La proposition r\'esulte de la Proposition \ref{pr:FI-g}. 
\end{proof}

Soit $\hq{x}{y}=\prod _{i=1}^{q}(a_ix+b_iy)$ la d\'ecomposition en facteurs irr\'eductibles de $\fhq$. Le $K$-wedge $\omega$ doit satisfaire l'\'equation $z^p=-\hq{x}{y}$ donc:  
\begin{center}
$t^{p\eta_{z}-q\eta_x}\psi^p=-\hq{\chi}{\varphi}=-\prod _{i=1}^{q}\gamma_{i},$ o\`u $\gamma_{i}:=a_i\chi+b_i\varphi$.
\end{center}

  Les combinaisons lin\'eaires de $\chi$ et $\varphi$ donn\'ees par les $\gamma_i$ plus l'hypoth\`ese sur les facteurs irr\'eductibles de $\fhq$ permettent de montrer le lemme suivant.

\begin{lemme}
\label{le:pgcdninv} Soit $\lambda:=\pgcd{\gamma_{1}}{\gamma_{2}}$. Alors    $\pgcd{\gamma_{i}}{\gamma_{j}}=\lambda I_{ij}$, o\`u $I_{ij}$ est  inversible pour tous les entiers   $1\leq i<j\leq q$. De  plus, si $\lambda$ est inversible, alors  $\chi$, $\varphi$ et $\psi$ sont inversibles.
\end{lemme}

 \begin{proof} 
 Soit  $\lambda_0:=\pgcd{\gamma_{i_0}}{\gamma_{j_0}}$, o\`u $i_0$ et $j_0$ sont  deux entiers tels que $1\leq i_0<j_0\leq q$. Si $\lambda_0$ n'est pas inversible, alors $\lambda_0$ divise $\chi$ et $\varphi$. Par cons\'equent,  $\lambda_0$ divise $\pgcd{\gamma_{i}}{\gamma_{j}}$ pour tous les entiers $i$, $j$ tels que $1\leq i<j\leq q$. Ce qui ach\`eve la premi\`ere partie du lemme.\\

Pour la deuxi\`eme partie de la proposition on suppose que la s\'erie formelle $\lambda$ est inversible. 

Raisonnons par l'absurde.  En vertu de la Proposition \ref{pr:etaGammapsi}, les s\'eries $\chi$ et $\varphi$ ne sont pas inversibles. Par cons\'equent, la s\'erie $\gamma_i$, $1\leq i\leq q$, n'est pas inversible. 
On rappelle que les s\'eries formelles  $\chi$, $\varphi$ et $\psi$ satisfont la relation suivante:  

\begin{center}
$t^{p\eta_{z}-q\eta_x}\psi^p=-\hq{\chi}{\varphi}=-\prod _{i=1}^{q}\gamma_{i},$ o\`u $\gamma_{i}:=a_i\chi+b_i\varphi$.
\end{center}

 Comme $\chi$ et $\varphi$ ne sont pas divisibles par $t$, $t$ divise $\gamma_i$ si et seulement si $t$ ne divise pas  $\gamma_j$ pour tout $j\neq i$. Quitte \`a re-num\'eroter  les $\gamma_i$, on peut supposer que $t^{p\eta_z-q\eta_x}$ divise $\gamma_1$. Soit $\gamma_{1}=-t^{p\eta_{z}-q\eta_x}\gamma'_{1}$.

Comme  $\pgcd{\gamma_{i}}{\gamma_{j}}$ est inversible pour tout $2\leq i<j\leq q$, la s\'erie formelle $\gamma'_1\prod _{i=2}^{q}\gamma_{i}$ a au moins $q-1$ facteurs irr\'eductibles deux \`a deux non associ\'es.

Comme $\psi^p= \gamma'_1\prod _{i=2}^{q}\gamma_{i}$, la s\'erie formelle $\gamma'_1\prod _{i=2}^{q}\gamma_{i}$ est le produit de $q-1$  puissances de s\'eries formelles irr\'eductibles deux \`a deux non-associ\'ees, car $\FI(\psi)\leq q-1$ (Proposition \ref{pr:mu-eta_BP}). Par cons\'equent,  on obtient que  $\gamma'_1$ est inversible, que la s\'erie $\gamma_i$, $2\leq i\leq q$, est une puissance d'une s\'erie formelle irr\'eductible et  que
 $\psi=\prod_{i=1}^{q-1}\psi_i$, o\`u les  $\psi_{i}$ sont des s\'eries formelles irr\'eductibles deux \`a deux non associ\'ees.
 Par cons\'equent, on peut supposer que $\gamma_{i+1}=\psi_i^pI_i$ pour $1\leq i\leq q-1$, o\`u les s\'eries formelles $I_i$ sont inversibles. Comme  $\gamma_i=a_i\chi+b_i\varphi$, $1\leq i\leq q$, et $q\geq 3$,  il existe deux constantes $a, b\in K$ telles que 
\begin{center} $t^{p\eta_{z}-q\eta_x}\gamma'_{1}=aI_{1}\psi_{1}^{p}+bI_{2}\psi_{2}^{p}$. \end{center}
                                                                                         
On rappelle qu'une s\'erie formelle dans $K[[s,t]]$ est inversible si et seulement si elle l'est  dans $\overline{K}[[s,t]]$, o\`u $\overline{K}$ est la cl\^oture alg\'ebrique de $K$. Soient $J_1$ et $J_2$ deux s\'eries formelles inversibles dans   $\overline{K}[[s,t]]$ telles que $J_1^p=aI_1$ et  $J_2^p=bI_2$. Ainsi, on obtient que:
  \begin{center} $t^{p\eta_{z}-q\eta_x}\gamma'_{1}=\prod_{i=1}^{p}(J_1\psi_1+w_iJ_2\psi_2)$, \end{center}
 o\`u les $w_i$ sont les racines $p$-i\`emes de l'unit\'e.                                                                                       
Mais $\gamma'_1$ est inversible et   $p\eta_{z}-q\eta_x>0$ (Proposition \ref{co:etacone}), donc $\psi_{1}$ et $\psi_{2}$ sont inversibles ou divisibles par $t$, ce qui est absurde. \end{proof}

La proposition suivante  ach\`eve la preuve du Th\'eor\`eme \ref{th:nashBP} (voir la Proposition \ref{pr:invserBP}). 

\begin{proposition}
\label{pr:NashBP}
 Les s\'eries formelles $\chi$, $\varphi$, $\psi$  sont inversibles.
\end{proposition}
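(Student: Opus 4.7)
L'id\'ee est de montrer l'inversibilit\'e de $\lambda := \pgcd{\gamma_1}{\gamma_2}$: le Lemme \ref{le:pgcdninv} donnera alors la conclusion. On raisonne par l'absurde en supposant $\lambda$ non inversible. D'apr\`es la Proposition \ref{pr:etaGammapsi}, ni $\chi$ ni $\varphi$ n'est alors inversible (donc $\chi(0,0) = \varphi(0,0) = 0$), et le Corollaire \ref{co:etacone} donne $N := p\eta_z - q\eta_x > 0$. On \'ecrit $\chi = \lambda\chi_0$, $\varphi = \lambda\varphi_0$ avec $\pgcd{\chi_0}{\varphi_0} = 1$, et on pose $\tilde{\gamma}_i := a_i\chi_0 + b_i\varphi_0 = \gamma_i/\lambda$; le m\^eme argument que celui du Lemme \ref{le:pgcdninv}, appliqu\'e \`a $\chi_0, \varphi_0$, assure que les $\tilde{\gamma}_i$ sont deux \`a deux premi\`eres entre elles. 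L'\'equation $t^N\psi^p = -\prod_i\gamma_i$ devient $t^N\psi^p = -\lambda^q\prod_i\tilde{\gamma}_i$; comme $t\nmid \lambda$, on peut isoler $\tilde{\gamma}_1 = t^N\tilde{\gamma}'_1$ (avec $t\nmid \tilde{\gamma}'_1$), d'o\`u
$$\psi^p = -\lambda^q\,\tilde{\gamma}'_1\prod_{i=2}^q\tilde{\gamma}_i.$$

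Pour toute s\'erie irr\'eductible $\pi$, notons $v_\pi$ la valuation $\pi$-adique. Si $\pi$ est un facteur irr\'eductible de $\lambda$ ne divisant aucune des $\tilde{\gamma}_k$, alors $p\,v_\pi(\psi) = q\,v_\pi(\lambda)$; la relation $\pgcd{p}{q} = 1$ impose $p \mid v_\pi(\lambda)$, en contradiction avec $v_\pi(\lambda) \leq \FI(\lambda) \leq \FI(\chi) \leq p-1$ (Proposition \ref{pr:mu-eta_BP}). Donc tout facteur irr\'eductible de $\lambda$ divise exactement une des $\tilde{\gamma}_k$. Le nombre de facteurs irr\'eductibles distincts de $\psi^p$ \'etant alors la somme, sur les $q$ s\'eries deux \`a deux premi\`eres entre elles $\tilde{\gamma}'_1, \tilde{\gamma}_2, \ldots, \tilde{\gamma}_q$, de leur nombre de facteurs irr\'eductibles distincts, et ce total \'etant major\'e par $\FI(\psi) \leq q-1 < q$, au moins l'une de ces s\'eries doit \^etre inversible. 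Si $\tilde{\gamma}_k$ \'etait inversible pour un $k \geq 2$, alors $(\chi_0, \varphi_0) = K[[s,t]]$, donc $\chi_0$ ou $\varphi_0$ serait inversible (les $a_i, b_i$ \'etant tous non nuls par la Remarque \ref{re:x-et-y-no-div-h}); si exactement une des deux l'est, tous les $\tilde{\gamma}_i$ deviennent inversibles, contredisant $t^N \mid \tilde{\gamma}_1$ avec $N > 0$; si les deux le sont, alors $\chi \sim \varphi \sim \lambda$ et $\psi^p \sim \lambda^q$ force $p \mid v_\pi(\lambda)$ pour tout $\pi \mid \lambda$, contredisant \`a nouveau $\FI(\lambda) \leq p-1$.

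Donc $\tilde{\gamma}'_1$ est inversible et les $\tilde{\gamma}_i$ pour $i \geq 2$ sont non inversibles; l'\'egalit\'e dans $\FI(\psi) \leq q-1$ force alors $\tilde{\gamma}_i = u_i\pi_i^{e_i}$ pour des irr\'eductibles $\pi_i$ deux \`a deux non associ\'ees, avec $v_{\pi_i}(\psi) = 1$. En utilisant $q \geq 3$, on exprime $\chi_0, \varphi_0$ comme combinaisons $K$-lin\'eaires de $\tilde{\gamma}_2, \tilde{\gamma}_3$; la contrainte $\tilde{\gamma}_1 = a_1\chi_0 + b_1\varphi_0 = t^N\tilde{\gamma}'_1$ se r\'e\'ecrit
$$A\, u_2\pi_2^{e_2} + B\, u_3\pi_3^{e_3} = t^N\tilde{\gamma}'_1$$
pour des constantes non nulles $A, B \in K$. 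Une adaptation de l'argument de factorisation dans $\overline{K}[[s,t]]$ de la fin de la preuve du Lemme \ref{le:pgcdninv} (factorisation du membre de gauche en produit de $p$ facteurs lin\'eaires et examen de leurs valuations $v_s, v_t$) conduit \`a la conclusion que $\pi_2$ ou $\pi_3$ serait inversible ou divisible par $t$, absurde. L'obstacle principal sera pr\'ecis\'ement cette derni\`ere adaptation, o\`u les exposants $e_2, e_3$ peuvent ne pas valoir $p$; n\'eanmoins, l'analyse des valuations $v_s, v_t$ sur chaque facteur du produit (ou sa variante appropri\'ee) fournit la contradiction recherch\'ee.
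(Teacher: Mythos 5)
Votre réduction initiale est correcte et recoupe en partie la preuve du papier (écriture $\gamma_i=\lambda\tilde{\gamma}_i$, comptage des facteurs irréductibles de $\psi$ via $\FI(\psi)\leq q-1$, conclusion que $\tilde{\gamma}'_1$ est inversible et que chaque $\tilde{\gamma}_i$, $i\geq 2$, est une puissance d'une irréductible $\pi_i$ avec $e_i+q\,g_i=p$). Mais là où le papier fait le vrai travail (Lemme \ref{le:lambda} et Propositions \ref{pr:minGamma}--\ref{pr:etaGamma}), votre texte laisse deux trous réels. D'abord, dans le cas «~$\tilde{\gamma}_k$ inversible pour un $k\geq 2$ et $\chi_0,\varphi_0$ tous deux inversibles~», l'assertion $\psi^p\sim\lambda^q$ est fausse en général~: on a $\psi^p\sim\lambda^q\tilde{\gamma}'_1$, et votre propre argument montre que tout facteur irréductible de $\lambda$ divise alors $\tilde{\gamma}'_1$, de sorte que $p\mid v_\pi(\lambda)$ ne suit plus. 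Ce cas n'est pas vide au niveau de l'équation formelle~: avec $q=3$, $p=5$, $\lambda=\pi=s$, $\chi_0=1$, $\varphi_0=(t^N u\pi^2-a_1)/b_1$ (inversible), on a $t^N\psi^5=-\hq{\chi}{\varphi}$ avec $\psi\sim\pi$, $\FI(\chi)=1$, $\FI(\psi)=1$~; la contradiction ne peut donc venir que des contraintes globales ($v$-ordres liés à $(\mu_x,\mu_y,\mu_z)$, polyèdre $\Gamma_{(\mu_x,\mu_z)}$, essentialité de $\E$), que vous n'invoquez pas.

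Ensuite, l'étape finale est précisément celle que vous laissez en « adaptation »~: l'équation $A\,u_2\pi_2^{e_2}+B\,u_3\pi_3^{e_3}=t^{N}\tilde{\gamma}'_1$ n'est \emph{pas} contradictoire en elle-même — prenez $\pi_2=s$, $\pi_3=s+t^{N}$, $e_2=e_3=1$~: $-s+(s+t^{N})=t^{N}$. Aucune « analyse des valuations $v_s,v_t$ » ne peut donc conclure sans deux ingrédients supplémentaires que le papier établit~: (a) l'égalité des exposants $e_2=e_3$ (obtenue via $\nu_v\gamma_j=\mu_x-\eta_x$ pour tout $j$, ce qui utilise $(\mu_x,\mu_y,\mu_z)=(p,p,q)$ et la Proposition \ref{pr:FI-g}), et (b) le fait que cet exposant commun vaut au moins $2$, c'est-à-dire $p\not\equiv 1 \bmod q$, qui provient de l'essentialité de $\E$ et du Corollaire \ref{co:Gdes.r=1} (si $p\equiv 1\bmod q$, le diviseur $\E_0=\D_{(p,p,q)}\cap\SD$ n'est pas essentiel). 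Votre comptage donne bien $\mu_z-\eta_z=q-1$, donc $(\mu_x,\mu_y,\mu_z)=(p,p,q)$, mais vous n'en tirez ni (a) ni (b)~; sans eux, le cas $e_2=e_3=1$ (ou $e_2\neq e_3$) reste ouvert et la preuve est incomplète.
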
 \begin{proof} 
 En raisonnant par l'absurde, on suppose les s\'eries formelles $\chi$, $\varphi$ et $\psi$ non toutes inversibles.
D'apr\`es le corollaire \ref{co:etacone}, on a  $p\eta_z-q\eta_x>0$.\\

On rappelle que les s\'eries formelles  $\chi$, $\varphi$ et $\psi$ satisfont la relation suivante:  

\begin{center}
$t^{p\eta_{z}-q\eta_x}\psi^p=-\hq{\chi}{\varphi}=-\prod _{i=1}^{q}\gamma_{i},$ o\`u $\gamma_{i}:=a_i\chi+b_i\varphi$.
\end{center}

 Comme $\chi$ et $\varphi$ ne sont pas divisibles par $t$, $t$ divise $\gamma_i$ si et seulement si  
$t$ ne divise pas  $\gamma_j$ pour tout $j\neq i$. Quitte \`a re-num\'eroter  les $\gamma_i$, on peut supposer que $t^{p\eta_z-q\eta_x}$ divise $\gamma_1$.\\

 D'apr\`es le Lemme \ref{le:pgcdninv}, il existe des s\'eries formelles $\gamma'_i$ telles que: 
\begin{center}
$\gamma_{1}=-t^{p\eta_{z}-q\eta_x}\gamma'_{1}\lambda$ et  $\gamma_{i}=\gamma'_{i}\lambda$ pour $2\leq i\leq q$,
\end{center}
 o\`u $\lambda$ n'est pas inversible et  $\pgcd{\gamma'_{i}}{\gamma'_{j}}$ est inversible pour $1\leq i<j\leq q$.\\

Le lemme suivant est le r\'esultat  cl\'e pour la preuve de la Proposition \ref{pr:NashBP}.\\

Dans toute la suite $v$, d\'esigne le vecteur de la Proposition \ref{pr:FI-g}.
\begin{lemme}
\label{le:lambda}
Le $v$-ordre de  $\lambda$ est $\mu_{x}-\eta_{x}$, c'est-\`a-dire $\nu_v{\lambda}=\mu_{x}-\eta_{x}$.
\end{lemme} 
D'abord finissons la preuve de la Proposition \ref{pr:NashBP}. 
 On rappelle que  $\eta_y=\eta_x\leq \mu_x=\mu_y\leq p$,  $\eta_z\leq \mu_z\leq q $ (voir le Corollaire \ref{co:musygemi}) et     que $(\eta_x,\eta_z)$  appartient \`a l'enveloppe convexe $\Gamma_{(\mu_x,\mu_z)}$ (voir la Proposition \ref{pr:etaGamma}) de l'ensemble $\tau'\cap\ZZ_{>0}^2$, o\`u $\tau'$ est le c\^one engendr\'e par $(0,1)$ et $(\mu_x,\mu_z)$. Soit $\gamma'=\prod_{i=1}^{q} \gamma'_{i}$, d'o\`u $\psi^p=\gamma'\lambda^q$. D'apr\`es le Lemme \ref{le:lambda}, on a  $\nu_v\gamma'=p\mu_{z}-q\mu_{x} -(p\eta_{z}-q\eta_{x})$. Par d\'efinition $\nu_v\gamma'\geq 0.$ 
Or  $\nu_v\gamma'\geq 0$ si et seulement si  $(\eta_x,\eta_z)=(\mu_x,\mu_z)$ (Proposition \ref{pr:minGamma}), d'o\`u une contradiction. En effet,  si  $(\eta_x,\eta_z)=(\mu_x,\mu_z)$ , alors les s\'eries formelles $\chi$, $\varphi$ et $\psi$ sont  inversibles  (voir les Propositions \ref{pr:FI-g} et \ref{pr:etaGammapsi}).\\

{\it D\'emonstration du Lemme \ref{le:lambda}} 
On rappelle que $\gamma_{i}=a_i\chi+b_i\varphi$ pour $1\leq i\leq q$. Alors, il existe au plus un $1\leq i_0\leq q$ tel que les $v$-parties principales $\chi_v$, $\varphi_v$ satisfont la relation  $a_{i_0}\chi_v+b_{i_0}\varphi_v=0$. En particulier on a $\nu_{v}\gamma_{i}=\mu_x-\eta_x$ pour tout $1\leq i\leq q$ tel que $i\neq i_{0}$. Par cons\'equent, $\nu_v \lambda\leq \mu_x-\eta_x$.\\

Si  $\nu_v \lambda< \mu_x-\eta_x$, alors $\nu_v\gamma'_{i}>0$, pour tout $2\leq i\leq q$. Ceci implique que les $\gamma'_{i}$, pour  $2\leq i\leq q$, ne sont pas inversibles. 

Comme  $\pgcd{\gamma'_{i}}{\gamma'_{j}}$ est inversible pour tous les entiers $i$, $j$ tels que  $2\leq i<j\leq q$, la s\'erie formelle 
 $\gamma'=\prod_{i=1}^{q} \gamma'_{i}$ 
a au moins $q-1$ facteurs irr\'eductibles deux \`a deux non associ\'es.   Comme on a 
$\psi^p=\gamma'\lambda^q$, la s\'erie formelle $\gamma'$ est le produit de $q-1$ puissances de s\'eries formelles  irr\'eductibles non-associ\'ees, car $\FI(\psi)\leq q-1$ (Proposition \ref{pr:mu-eta_BP}). Par cons\'equent,  on obtient que  $\gamma'_1$ est inversible, que la s\'erie $\gamma'_i$, $2\leq i\leq q$, est une puissance d'une s\'erie formelle irr\'eductible,  que
 $\psi=\prod_{i=1}^{q-1}\psi_i$, o\`u les  $\psi_{i}$ sont des s\'eries formelles irr\'eductibles deux \`a deux non associ\'ees, et que  $\mu_z-\eta_z=q-1$. De plus, on a  $(\mu_x,\mu_y,\mu_z)=(p,p,q)$. Ceci  implique que $p\not\equiv 1\mod{q}$,   car si $p\equiv 1\mod{q}$, alors  le diviseur $\E_{0}=\D_{(p,p,q)}\cap\SD$ n'est pas un diviseur essentiel (voir la Proposition \ref{pr:sygemi} et le Corollaire  \ref{co:Gdes.r=1}).\\

On rappelle qu'une s\'erie formelle dans $K[[s,t]]$ est inversible si et seulement si elle est inversible dans $\overline{K}[[s,t]]$, o\`u $\overline{K}$ est la cl\^oture alg\'ebrique de $K$. Dans la suite on suppose que le corps  $K$ est alg\'ebriquement clos.\\

On fixe un entier $1\leq i\leq q-1$ quelconque. On peut donc supposer que $\psi^p_{i}=\gamma'_{i+1}\lambda_i^{q}$, o\`u $\xi\lambda=\prod_{j=1}^{q-1}\lambda_j$, $\xi^q=\gamma'_1$ et  $\pgcd{\lambda_j}{\lambda_{j'}}$ est inversible pour tous les entiers $j$, $j'$ tels que  $1\leq j<j'\leq q-1$.
Comme $\psi_i$ est irr\'eductible et $\gamma'_{i+1}$ n'est pas inversible, il existe deux entiers $l_i\geq 1$ et $m_i\geq 0$ tels  que $\gamma'_{i+1}=I_i\psi^{l_i}$ et $\lambda_i=I_i^{-1}\psi_i^{m_i}$, o\`u $I_i$ est une s\'erie formelle inversible. Comme  $\nu_v\psi_i=1$, on a $\nu_v\lambda_i=m_i$,  
$\nu_v\gamma'_{i+1}=l_i$ et $l_i+qm_i=p$.\\

 Comme  $(\mu_x,\mu_y,\mu_z)=(p,p,q)$, on a $t^{p\eta_z-q\eta_x}\psi_v=-\hq{\chi_{ {\it v}}}{\varphi_{{\it v}}}$ (Proposition \ref{pr:FI-g}), d'o\`u $\nu_v\gamma_{j}=\mu_x-\eta_x$ pour tout $1\leq j\leq q$. Par cons\'equent, pour tout $1\leq j\leq q-1$, on a  $l_j=\nu_v\gamma'_{j+1}=\nu_v \gamma'_2=l_1$, car $\gamma_{j+1}=\gamma'_{j+1}\lambda$.  En particulier, on a  $m_j=m_1$, pour tout $1\leq j\leq q-1$, car $l_1+qm_j=p$. Ainsi, on obtient que $\gamma_{j+1}=I_{j}\psi_{j}^{p-m_1q}\lambda$, pour $1\leq j\leq q-1$, o\`u les $I_{j}$ sont  inversibles. Comme $q\geq 3$ et  $\gamma_{j}:=a_j\chi+b_j\varphi$, il existe deux constantes  $a, b\in K$ telles que \begin{center} $t^{p\eta_{z}-q\eta_x}\gamma'_{1}=aI_{1}\psi_{1}^{p-m_1q}+bI_{2}\psi_{2}^{p-m_1q}$. \end{center}

 Soient $J_1$ et $J_2$ deux s\'eries formelles inversibles telles que $J_1^{p-m_1q}=aI_1$ et  $J_2^{p-m_1q}=bI_2$. Ainsi, on obtient que:
  \begin{center} $t^{p\eta_{z}-q\eta_x}\gamma'_{1}=\prod_{i=1}^{p-m_1q}(J_1\psi_1+w_iJ_2\psi_2)$, \end{center}
 o\`u les $w_i$ sont les racines $(p-m_1q)$-i\`emes de l'unit\'e.      
Mais $\gamma'_1$ est inversible,  $p\eta_{z}-q\eta_x>0$ et $p\not\equiv 1 \mod q$, donc $\psi_{1}$ et $\psi_{2}$ sont inversibles ou divisibles par $t$, ce qui est absurde. \end{proof}

\section[Les singularit\'es de type $\EE_6$ et $\EE_7$]{Preuve de la bijectivit\'e de l'application de Nash pour les singularit\'es de type $\EE_6$ et $\EE_7$ }
\subsection{La singularit\'e de type $\EE_6$}
      Dans cette section, on d\'emontre la bijectivit\'e de l'application de Nash pour la singularit\'e de type $\EE_6$, ce qui \'equivaut \`a montrer que tous les wedges admissibles se rel\`event \`a la r\'esolution minimale de $\EE_6$ (voir \cite{Reg06}).\\

Soit $S$ l'hypersurface normale de $ \AF_{\KK}^3$  donn\'ee par l'\'equation $x^2+y^3+z^4=0$.  L'hypersurface $S$ a un unique point singulier de type $\EE_6$ \`a l'origine de $\AF_{\KK}^3$.\\

Notons $\f=x^2+y^3+z^4$; on consid\`ere l'\'eventail de Newton  $\EN{\f}$ associ\'e \`a $\f$. Soit $H$ un plan de $\RR^3$ qui ne contient pas l'origine de $\RR^3$ et tel que l'intersection de $H$ et $\RR^3_{\geq 0}$ soit un ensemble compact.  La Figure \ref{fi:ENE6} repr\'esente l'intersection de $H$ avec la subdivision $\EN{\f}$ de $\RR^3_{\geq 0}$. Chaque sommet du diagramme est identifi\'e avec le {\it vecteur extr\'emal}  correspondant.  On note $\tau_1$ (resp. $\tau_2$, $\tau_3$)  le c\^one engendr\'e par les vecteurs $(1,0,0)$ (resp. $(0,1,0)$, $(0,0,1)$) et $(6,4,3)$. 

\begin{figure}[!h]

 \begin{tikzpicture}[font=\small]

   \draw[thick] (0,0)  -- (330:3)  (0,0)  -- (90:3)  (0,0)  -- (210:3);
   \draw[thick] (330:3) --  (90:3)  -- (210:3) -- (330:3);
\draw (0,-0.4) node[inner sep=-1pt,below=-1pt,rectangle,fill=white] {{\tiny $(6,4,3)$}};
\draw (335:3.5) node[inner sep=-1pt,below=-1pt,rectangle,fill=white] {{\tiny $(0,1,0)$}};
\draw (340:1.5) node[inner sep=-1pt,below=-1pt,rectangle,fill=white] {$\tau_2$};
\draw (205:3.5) node[inner sep=-1pt,below=-1pt,rectangle,fill=white] {{\tiny $(1,0,0)$}};
\draw (200:1.5) node[inner sep=-1pt,below=-1pt,rectangle,fill=white] {$\tau_1$};
\draw (90:3.3) node[inner sep=-1pt,below=-1pt,rectangle,fill=white] {{\tiny $(0,0,1)$}};
\draw (80:1.5) node[inner sep=-1pt,below=-1pt,rectangle,fill=white] {$\tau_3$};
    \draw[fill=black]  (0,0) circle(0.7mm) (330:3) circle(0.7mm)  (90:3) circle(0.7mm)  (210:3) circle(0.7mm);

  \end{tikzpicture}

\caption{\'Eventail de Newton $\EN{f}$}
\label{fi:ENE6}
\end{figure}

 Soit  $\GD{\f}$ une $G$-{\it subdivision r\'eguli\`ere}  de  $\EN{\f}$. On note  $\pi_{\mathcal{N}}:X(\EN{\f})\rightarrow \AF^{3}_{\KK}$  (resp. $\pi_{\mathcal{G}}:X(\GD{\f})\rightarrow X(\EN{\f})$)  le morphisme torique induit par la subdivision $\EN{\f}$ de $\RR^3_{\geq 0}$ (resp. $\GD{\f}$ de  $\EN{\f}$) et $S_{\mathcal{G}}$ le transform\'e strict de $S$ associ\'e au morphisme $\pi:=\pi_{\mathcal{G}}\circ \pi_{\mathcal{N}}$.\\

La proposition suivante est un analogue de la Proposition \ref{pr:Gdes1}.

\begin{proposition}
 \label{pr:desingE6} $S_{\mathcal{G}}$ est la r\'esolution minimale de $S$.
\end{proposition}

Soient $\E$ un diviseur essentiel sur $S$ ($\E\in  \Ess(S))$  et  $\alpha_{\E}$ le point g\'en\'erique de $N_{\E}$. On note 
\begin{center} $(\mu_x,\mu_y,\mu_z):=(\ordt{\com{\alpha_{\E}}}(x),\ordt{\com{\alpha_{\E}}}(y), \ordt{\com{\alpha_{\E}}}(z))$,\end{center}
o\`u $\com{\alpha_{\E}}$ est le comorphisme de $\alpha_{\E}$.\\

Pour un c\^one  $\tau$ dans $\EN{\f}$ on note $G\tau$ le syst\`eme g\'en\'erateur minimal du semi-groupe $\tau\cap \ZZ^3$.\\

La proposition suivante est un analogue du Corollaire \ref{co:musygemi}.
\begin{proposition}
\label{pr:musygemiE6}
 Le vecteur $(\mu_x,\mu_y,\mu_z)$ appartient \`a l'union de  $G\tau_2$ et $G\tau_3$, o\`u $\tau_2$  (resp. $\tau_3$) est  le c\^one engendr\'e par les vecteurs $(0,1,0)$ (resp.  $(0,0,1)$) et $(6,4,3)$ (voir la figure \ref{fi:ENE6}).  Autrement dit,  $(\mu_x,\mu_y,\mu_z)\in \{(2,2,1),(3,2,2),(4,3,2),(6,4,3)\}$.
\end{proposition}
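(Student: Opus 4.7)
Le plan est de calquer la démonstration sur celle du Corollaire \ref{co:musygemi}, en combinant la Proposition \ref{pr:musygemi-g} appliquée à $S=\{\f=x^2+y^3+z^4=0\}$ avec un calcul explicite des systèmes générateurs minimaux des semi-groupes $\tau_i\cap\ZZ^3$, $i\in\{1,2,3\}$. Je commencerai par vérifier les hypothèses de la Proposition \ref{pr:musygemi-g} pour $\f$: le polynôme $\f$ est quasi-homogène de poids $(6,4,3)$, non-dégénéré par rapport à sa frontière de Newton (les dérivées partielles $2x$, $3y^2$, $4z^3$ n'ont pas de zéro commun hors de $xyz=0$), l'origine est son unique point singulier, et $S$ ne contient aucune $T$-orbite de dimension $1$ puisque aucune des variables $x$, $y$, $z$ ne divise $\f$.

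La Proposition \ref{pr:musygemi-g} livrera alors $(\mu_x,\mu_y,\mu_z)\in E\GD{\f}\cap S_2\EN{\f}$, et comme le point générique du diviseur essentiel $\E$ se projette sur le point singulier $O$, ce vecteur appartient aussi à $\ZZ_{>0}^3$. Le $2$-squelette $S_2\EN{\f}$ se décomposant en $\tau_1\cup\tau_2\cup\tau_3\cup S_2\RR_{\geq 0}^3$, l'intersection avec $\ZZ_{>0}^3$ est incluse dans $\tau_1\cup\tau_2\cup\tau_3$; le vecteur $(\mu_x,\mu_y,\mu_z)$, étant un vecteur extrémal de $\GD{\f}$ situé sur le $2$-squelette de $\EN{\f}$, doit être un élément de $(G\tau_1\cup G\tau_2\cup G\tau_3)\cap\ZZ_{>0}^3$ par définition d'une $G$-subdivision régulière.

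Il restera à expliciter les générateurs minimaux. En paramétrant les points entiers $(\alpha+6\beta,4\beta,3\beta)$ de $\tau_1=\langle(1,0,0),(6,4,3)\rangle$, la condition $3\beta,4\beta\in\ZZ$ force $\beta\in\ZZ_{\geq 0}$, d'où $G\tau_1\cap\ZZ_{>0}^3=\{(6,4,3)\}$. De même, $\tau_2=\langle(0,1,0),(6,4,3)\rangle$ donne $\beta\in\tfrac{1}{3}\ZZ_{\geq 0}$ et $G\tau_2\cap\ZZ_{>0}^3=\{(2,2,1),(4,3,2),(6,4,3)\}$; tandis que $\tau_3=\langle(0,0,1),(6,4,3)\rangle$ donne $\beta\in\tfrac{1}{2}\ZZ_{\geq 0}$ et $G\tau_3\cap\ZZ_{>0}^3=\{(3,2,2),(6,4,3)\}$. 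La réunion $\{(2,2,1),(3,2,2),(4,3,2),(6,4,3)\}$ est précisément $(G\tau_2\cup G\tau_3)\cap\ZZ_{>0}^3$, ce qui achèvera la preuve. L'étape la plus technique sera le calcul des semi-groupes $\tau_2\cap\ZZ^3$ et $\tau_3\cap\ZZ^3$, qui sont respectivement d'indice $3$ et $2$ dans leur réseau saturé, mais cela reste un exercice élémentaire sur les points entiers d'un cône plan.
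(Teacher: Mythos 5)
Votre démonstration est correcte et suit exactement la voie que l'article laisse implicite (la proposition y est présentée comme l'analogue du Corollaire \ref{co:musygemi}, c'est-à-dire Proposition \ref{pr:musygemi-g} plus la structure du $2$-squelette et la définition de $G$-subdivision, puis le calcul des systèmes générateurs minimaux de $\tau_1$, $\tau_2$, $\tau_3$). Vos bases de Hilbert explicites, $G\tau_1\cap\ZZ_{>0}^3=\{(6,4,3)\}$, $G\tau_2\cap\ZZ_{>0}^3=\{(2,2,1),(4,3,2),(6,4,3)\}$ et $G\tau_3\cap\ZZ_{>0}^3=\{(3,2,2),(6,4,3)\}$, sont exactes et donnent bien l'ensemble annoncé.
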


Dans la suite, on  montre que pour chaque diviseur essentiel $\E$ tous les $K$-wedges admissibles centr\'es en $N_{\E}$  se rel\`event \`a la r\'esolution minimale de $S$.\\

Soit $\E\in \Ess(S)$ et on consid\`ere un $K$-wedge $\omega:\spec K[[s,t]]\rightarrow S$ admissible centr\'e en $N_{\E}$. On pose: \begin{center} $(\eta_x,\eta_y,\eta_z): =(\ordt \omega^{\star }(x),\ordt \omega^{\star }(y),\ordt \omega^{\star }(z))$. \end{center}

  On peut \'ecrire le comorphisme  de  $\omega$  de la fa\c con  suivante:

\begin{center} 
$\omega^{\star }(x)=t^{\eta_x}\chi,\;\omega^{\star }(y)=t^{\eta_y}\varphi,\; \omega^{\star }(z)=t^{\eta_z}\psi,$
\end{center}
o\`u les s\'eries formelles $\chi$, $\varphi$, $\psi$ ne sont pas divisibles par $t$. On rappelle que $(\eta_x,\eta_y,\eta_z)\in\ZZ^{3}_{>0}$, car $\omega$ est un $K$-wedge admissible.\\

\begin{remarque}
\label{re:221eta643}
 En vertu de la Proposition  \ref{pr:musygemiE6}, on a $\mu_x\leq 6$, $\mu_y\leq 4$ et $\mu_z\leq 3$. En particulier, on a 
$\eta_x\leq 6$, $\eta_y\leq 4$ et $\eta_z\leq 3$, car $\eta_x\leq \mu_x$, $\eta_y\leq \mu_y$ et $\eta_z\leq \mu_z$.

Le $K$-wedge $\omega$ doit satisfaire l'\'equation $x^2+y^3+z^{4}=0$, d'o\`u la relation suivante:
             
\begin{equation*} t^{2\eta_x}\chi^2+t^{3\eta_y}\varphi^3+t^{4\eta_z}\psi^{4}=0. 
\end{equation*}
Ce qui implique que $2\leq \eta_x\leq 6$, $2\leq \eta_y\leq 4$ et $1\leq \eta_z\leq 3$.
\end{remarque}

\begin{remarque} 
\label{re:plan-dem-E6}
D'apr\`es la  Proposition \ref{pr:invserBP}, si les s\'eries formelles $\chi$, $\varphi$ et $\psi$ sont inversibles, alors le $K$-wedge admissible $\omega$ centr\'e en $N_{\E}$ se rel\`eve \`a la r\'esolution minimale de $S$.
 En raisonnant par l'absurde, si au moins l'une d'elles n'est pas inversible on obtient deux cas pour le vecteur $(\eta_x,\eta_y,\eta_z)$ (voir la Proposition \ref{pr:etaconeE6}), ensuite on consid\`ere chaque cas s\'epar\'ement pour obtenir des contradictions (voir les Propositions \ref{pr:2e6} et \ref{pr:1e6}), d'o\`u le cas $\EE_6$ du  Th\'eor\`eme \ref{th:nashDP}.\\
\end{remarque}
On rappelle les d\'efinitions suivantes:\\

Pour  une s\'erie non nulle $\phi:=\sum c_{(e_1,e_2)}s^{e_1}t^{e_2}$, o\`u $c_{(e_1,e_2)} \in K$, on d\'efinit les applications suivantes:\\

\begin{itemize}
 \item[] $\nu:\RR^{2}_{>0}\rightarrow \RR_{\geq 0},v\mapsto \nu_v\phi:=\min\{v\cdot e\mid e \in\mathcal{E}(\phi)\}$, o\`u $ \mathcal{E}(\phi)= \{(e_1,e_2)\mid c_{(e_1,e_2)}\neq 0\}$;\\

\item[] $\pp:\RR^{2}_{>0}\rightarrow K[s,t],v\mapsto\phi_v:=\sum\limits_{{\tiny \mbox{$e\cdot v= \nu_v\phi$}}} c_{(e_1,e_2)}s^{e_1}t^{e_2}$;\\

\item[] $\FI:K[[s,t]]\backslash\{0\}\rightarrow \ZZ_{\geq 0}$, o\`u $\FI(\phi)$ est le nombre de facteurs irr\'eductibles de $\phi$ compt\'es avec multiplicit\'e.\\
\end{itemize}

Le r\'eel  $\nu_v\phi$ (resp. Le polyn\^ome $\phi_v$) est appel\'e le $v$-{\it ordre}  (resp. la $v$-{\it partie principale}) de $\phi$.\\

 D'apr\`es la Proposition \ref{pr:musygemiE6} et la Remarque \ref{re:221eta643}, on a:\\

\begin{center} 
 $\mu_x-\eta_x\leq 4$, $\mu_y-\eta_y\leq 2$ et $\mu_z-\eta_z\leq 2$.\\
\end{center}

En vertu de la Proposition \ref{pr:FI-g}, on peut majorer le nombre de facteurs irr\'eductibles compt\'es avec multiplicit\'e des s\'eries formelles $\chi$, $\varphi$ et $\psi$ \`a l'aide des $v$-ordres. Plus  pr\'ecis\'ement,  il existe un vecteur $v\in \QQ^{2}_{>0} $  tel que:\\

\begin{itemize}
   
  \item[]$\FI(\chi)\leq \degt{\chi_v}= \nu_v\chi =\mu_x-\eta_x\leq 4$;
\item[] $\FI(\varphi)\leq \degt{\varphi_v}= \nu_v\varphi= \mu_y-\eta_y\leq 2$;
 \item[]$\FI(\psi)\leq \degt{\psi_v}= \nu_v\psi= \mu_z-\eta_z\leq 2$.\\
\end{itemize}

Sauf mention du contraire, dans toute la suite le vecteur  $v\in \QQ_{>0}^2$ satisfait la propri\'et\'e ci-dessus.\\

  On remarque qu'une s\'erie formelle $\phi \in K[[s,t]]$ est inversible dans $K[[s,t]]$ si et seulement si elle est inversible dans $\overline{K}[[s,t]]$, o\`u $\overline{K}$ est la cl\^oture alg\'ebrique de $K$. Dans toute la suite on suppose que le corps $K$ est alg\'ebriquement clos.\\

 Maintenant, on prouve la  premi\`ere des trois propositions qu'on a anticip\'e dans la Remarque \ref{re:plan-dem-E6}.

\begin{proposition}
\label{pr:etaconeE6}
 S'il existe au moins  une s\'erie formelle parmi les s\'eries  $\chi$, $\varphi$, $\psi$ qui n'est pas  inversible, alors le vecteur $(\eta_x,\eta_y,\eta_z)\in\ZZ^{3}_{>0}$ satisfait une des relations suivantes:

 \begin{itemize}
\item[i)] $2\eta_x=3\eta_y$ et $\eta_x<2\eta_z$;

  \item[ii)] $\eta_x=2\eta_z$ et $2\eta_x<3\eta_y$.
 \end{itemize}

\end{proposition}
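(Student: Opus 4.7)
Mon approche consistera d'abord à appliquer la Proposition \ref{pr:etacone-g}, qui place le vecteur $(\eta_x,\eta_y,\eta_z)$ dans $S_2\EN{\f}\cap\ZZ^3_{>0}$. Comme ses coordonnées sont strictement positives, ce vecteur ne peut appartenir à aucune des trois $2$-faces du cône $\RR^3_{\geq 0}$ et doit donc se trouver dans l'un des trois cônes de dimension $2$ de l'éventail $\EN{\f}$, à savoir $\tau_1$, $\tau_2$ ou $\tau_3$ représentés sur la Figure \ref{fi:ENE6}. Je calculerai explicitement les conditions imposées par chacun de ces cônes en paramétrant leurs points par les générateurs : dans $\tau_1$ on aura $3\eta_y=4\eta_z$ et $2\eta_x\geq 3\eta_y$; dans $\tau_2$ on aura $\eta_x=2\eta_z$ et $3\eta_y\geq 2\eta_x$; dans $\tau_3$ on aura $2\eta_x=3\eta_y$ et $2\eta_z\geq\eta_x$.

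Je combinerai ensuite ces conditions avec les bornes $\eta_x\leq 6$, $\eta_y\leq 4$, $\eta_z\leq 3$ fournies par la Remarque \ref{re:221eta643}. Dans $\tau_1$, l'égalité $3\eta_y=4\eta_z$, où les entiers $3$ et $4$ sont premiers entre eux, contraindra $\eta_y$ à être multiple de $4$ et $\eta_z$ multiple de $3$, d'où $\eta_y=4$ et $\eta_z=3$; l'inégalité $2\eta_x\geq 12$ combinée à $\eta_x\leq 6$ forcera alors $\eta_x=6$, soit $(\eta_x,\eta_y,\eta_z)=(6,4,3)$. Je déduirai via $\eta\leq\mu$ composante par composante et la liste finie de la Proposition \ref{pr:musygemiE6} que $(\mu_x,\mu_y,\mu_z)=(6,4,3)=(\eta_x,\eta_y,\eta_z)$, puis par la Proposition \ref{pr:FI-g} que les trois séries $\chi$, $\varphi$, $\psi$ sont simultanément inversibles, contredisant l'hypothèse. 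Ceci exclura à la fois le cône $\tau_1$ entier et le point $(6,4,3)$ dans tous les cas.

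Il restera donc à traiter $\tau_2$ et $\tau_3$ privés du point $(6,4,3)$. Dans $\tau_2$, une éventuelle égalité $3\eta_y=2\eta_x$ jointe à $\eta_x=2\eta_z$ rendrait $\eta_x$ divisible à la fois par $2$ et par $3$, donc par $6$, ce qui ramènerait via les bornes au point $(6,4,3)$ déjà écarté; l'inégalité $3\eta_y\geq 2\eta_x$ sera par conséquent stricte et l'on obtiendra le cas $ii)$. Un raisonnement symétrique dans $\tau_3$ donnera le cas $i)$. Le point le plus délicat de la démonstration sera l'identification correcte des équations de chaque cône $\tau_i$ à partir de la géométrie des faces de $\Gamma(\f)$; une fois cette identification établie, les bornes très restrictives imposées par la Proposition \ref{pr:musygemiE6} rendent l'énumération des cas essentiellement mécanique.
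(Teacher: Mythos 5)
Votre proposition est correcte et suit essentiellement la m\^eme d\'emarche que l'article : la Proposition \ref{pr:etacone-g} place $(\eta_x,\eta_y,\eta_z)$ dans $\tau_1\cup\tau_2\cup\tau_3$, les bornes de la Remarque \ref{re:221eta643} combin\'ees \`a la Proposition \ref{pr:FI-g} (via $\eta\leq\mu$ et la liste de la Proposition \ref{pr:musygemiE6}) excluent $\tau_1$ ainsi que le point $(6,4,3)$, et les deux c\^ones restants donnent les cas i) et ii). La seule diff\'erence, sans cons\'equence, est que vous \'ecartez $\tau_1$ par la param\'etrisation $3\eta_y=4\eta_z$, $2\eta_x\geq 3\eta_y$ et un argument de coprimalit\'e, l\`a o\`u l'article invoque la r\'egularit\'e du c\^one $\tau_1$ pour minorer ses points entiers strictement positifs; les deux arguments forcent $(\eta_x,\eta_y,\eta_z)=(6,4,3)$ et aboutissent \`a la m\^eme contradiction.
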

\begin{proof}

Soit $S_2\RR^3_{\geq 0}$ le $2$-squelette du c\^one $\RR^3_{\geq 0}$.
On rappelle que $\tau_1\in \EN{\f}$ (resp. $\tau_2\in \EN{\f}$, $\tau_3\in \EN{\f}$) est   le c\^one engendr\'e par les vecteurs $(1,0,0)$ (resp. $(0,1,0)$, $(0,0,1)$) et $(6,4,3)$ (voir la Figure \ref{fi:ENE6}). Remarquons que $S_2\EN{\f}=\bigcup_{i=1}^{3}\tau_i\cup S_2\RR^3_{\geq 0}$.\\

En vertu de la Proposition \ref{pr:etacone-g}, on a $(\eta_x,\eta_y,\eta_z)\in S_{2}\EN{\f}\cap \ZZ^{3}_{>0}$. Ce qui implique que  le vecteur $(\eta_x,\eta_y,\eta_z)$ appartient \`a l'ensemble $\bigcup_{i=1}^{3}\tau_i$.\\

On remarque que  $(\eta_x,\eta_y,\eta_z)\neq (6,4,3)$. En effet, si $(\eta_x,\eta_y,\eta_z)=(6,4,3)$ alors 
$(\eta_x,\eta_y,\eta_z)=(\mu_x,\mu_y,\mu_z)$ (voir la Remarque \ref{re:221eta643}), d'o\`u les s\'eries formelles  $\chi$, $\varphi$ et  $\psi$ sont inversibles (Proposition \ref{pr:FI-g}). Ceci  rentre en contradiction avec les hypoth\`eses de la Proposition \ref{pr:etaconeE6}. Par cons\'equent, le vecteur $(\eta_x,\eta_y,\eta_z)$ appartient \`a l'ensemble $\bigcup_{i=1}^{3}\tau_i^0$, o\`u $\tau_i^0$ est l'int\'erieur relatif du c\^one $\tau_i$, $1\leq i\leq 3$.\\

Par un calcul direct, on montre que le c\^one $\tau_1$ est r\'egulier. Par cons\'equent,
le semi-groupe $\tau_1\cap \ZZ^{3}$  est engendr\'e par les vecteurs  $(1,0,0)$ et $(6,4,3)$.  En particulier, si $(a,b,c)\in \tau_1\cap \ZZ^{3}_{>0}$, alors $6\leq a$, $4\leq b$ et $3\leq c$. Ce qui implique que le vecteur $(\eta_x,\eta_y,\eta_z)$ appartient \`a l'ensemble $\tau^0_2\cup \tau^0_3$.\\ 

Si $(\eta_x,\eta_y,\eta_z)$ appartient \`a l'ensemble $\tau^0_2$ (resp. $\tau^0_3$), alors  $\eta_x=2\eta_z$ et $2\eta_x<3\eta_y$ (resp.  $2\eta_x=3\eta_y$ et $\eta_x<2\eta_z$), d'o\`u la proposition.
 \end{proof}

 Maintenant, on d\'emontre quelques r\'esultats techniques.\\

On rappelle que le $K$-wedge $\omega$ doit satisfaire l'\'equation $x^2+y^3+z^{4}=0$, d'o\`u la relation suivante:
             
\begin{equation} t^{2\eta_x}\chi^2+t^{3\eta_y}\varphi^3+t^{4\eta_z}\psi^{4}=0. \label{eq:1}\\
\end{equation}

Cette relation est utilis\'ee dans plusieurs endroits de la d\'emonstration du Th\'eor\`eme \ref{th:nashDP}.

\begin{lemme}
\label{le:643}
Si le vecteur  $(\mu_x,\mu_y,\mu_z)$ est \'egal au vecteur (6,4,3), alors les $v$-ordres des s\'eries formelles $t^{\eta_x}\chi+it^{2\eta_z}\psi^2$  et  $t^{\eta_x}\chi-it^{2\eta_z}\psi^2$ sont \'egaux \`a $6$, c'est-\`a-dire on a: 

 $$\nu_v(t^{\eta_x}\chi+it^{2\eta_z}\psi^2)=\nu_v(t^{\eta_x}\chi-it^{2\eta_z}\psi^2)=6.$$
\end{lemme}

\begin{proof}
Dans ce cas, on a  $\nu_v(t^{2\eta_x}\chi^2)=\nu_v(t^{3\eta_y}\varphi^3)=\nu_v(t^{4\eta_z}\psi^4)=12$, donc les $v$-parties principales   $\chi_v$, $\varphi_v$ et $\psi_v$ satisfont la relation suivante (voir la Relation (\ref{eq:1})):

$$ t^{3\eta_y}\varphi_v^3+(t^{\eta_x}\chi_v+it^{2\eta_z}\psi_v^2)(t^{\eta_x}\chi_v-it^{2\eta_z}\psi_v^2)=0.$$

 La relation ci-dessus  implique que $(t^{\eta_x}\chi_v+it^{2\eta_z}\psi_v^2)(t^{\eta_x}\chi_v-it^{2\eta_z}\psi_v^2)\neq0$ et par cons\'equent, on obtient que les $v$-ordres des s\'eries formelles $t^{\eta_x}\chi+it^{2\eta_z}\psi^2$  et  $t^{\eta_x}\chi-it^{2\eta_z}\psi^2$ sont \'egaux \`a $6$, car  les $v$-parties principales $t^{\eta_x}\chi_v+it^{2\eta_z}\psi_v^2$ et $ t^{\eta_x}\chi_v-it^{2\eta_z}\psi_v^2$  sont diff\'erentes de z\'ero et  les $v$-ordres des s\'eries formelles $t^{\eta_x}\chi$ et $t^{2\eta_z}\psi^{2}$ sont \'egaux \`a $6$. \end{proof}

\begin{lemme}
\label{le:pre0E6}
S'il existe une s\'erie formelle   irr\'eductible $\lambda$ qui divise $\chi$,$\varphi$, et $\psi$, alors $\lambda^2$ divise $\varphi$.
\end{lemme}
\begin{proof}
On suppose que  $\lambda$ divise $\chi$, $\varphi$, et $\psi$.  Soient $\chi=\lambda\chi_1$,  $\varphi=\lambda\varphi_1$ et $\psi=\lambda\psi_1$. Alors au moyen de la Relation (\ref{eq:1}) on obtient la relation suivante:

$$t^{2\eta_x}\chi_1^2+t^{3\eta_y}\lambda\varphi_1^3 +t^{4\eta_z}\lambda^2\psi_1^{4}=0,$$
ce qui implique que $\lambda$ divise $\chi_1$. En particulier $\lambda$ divise $\varphi_1$.\end{proof}

 \begin{lemme}
 \label{le:varinv}
 Si la s\'erie formelle $\varphi$ est inversible, alors les s\'eries formelles $\chi$ et $\psi$ sont inversibles.
 \end{lemme}
  \begin{proof}

 Raisonnons par l'absurde. On suppose que $\varphi$ est inversible et  que $\chi$ ou $\psi$ n'est pas inversible.\\
 
 D'apr\`es la Proposition \ref{pr:etaconeE6},  on a  deux possibilit\'es pour le vecteur $(\eta_x,\eta_y,\eta_z) $:

 \begin{itemize}
 \item[1)] $\eta_x=2\eta_z$ et $2\eta_x<3\eta_y$;
 
 \item[2)] $2\eta_x=3\eta_y$ et $\eta_x<2\eta_z$.\\
 \end{itemize}
 
  Cas 1).  On suppose que: $\eta_x=2\eta_z$  et  $2\eta_x<3\eta_y$.\\

   D'apr\`es  la Relation (\ref{eq:1}), on a:

$$ t^{3\eta_y-2\eta_x}\varphi^3+(\chi+i\psi^2)(\chi-i\psi^2)=0.$$

On remarque que $t$ divise $\chi+i\psi^{2}$ si et seulement si $t$ ne divise pas $\chi-i\psi^{2}$ car $t$ ne divise pas  $\chi$  et $ \varphi$. On peut donc sans perte de g\'en\'eralit\'e supposer que $t^{3\eta_y-2\eta_x}$ divise $\chi+i\psi^2$.\\

Si la s\'erie formelle $\varphi$ est inversible, alors la relation  $ t^{3\eta_y-2\eta_x}\varphi^3+(\chi+i\psi^2)(\chi-i\psi^2)=0$ \'equivaut au syst\`eme de relations suivant:
  
  $$t^{3\eta_y-2\eta_x}I_1=\chi+i\psi^2,\; I_2=-\chi+i\psi^2,$$
  o\`u $I_1,$ et $I_2$  sont deux s\'eries formelles inversibles de  $K[[s,t]]$ telles que $I_1I_2=\varphi^3$. Alors, les s\'eries formelles $\chi$ et $\psi$ sont inversibles, d'o\`u la contradiction.\\

  Cas 2).  On suppose que: $2\eta_x=3\eta_y$  et $\eta_x<2\eta_z$.\\
 
Comme  $2\eta_x=3\eta_y$, on obtient que  $\eta_x$ (resp. $\eta_y$) est divisible par $3$ (resp. $2$). Par cons\'equent, on a  $(\eta_x,\eta_y)=(3,2)$ et $2\leq \eta_z\leq 3$, car  $2\leq \eta_x\leq 6$, $2\leq \eta_y\leq 4$, $1\leq \eta_z\leq 3$ (voir la Remarque \ref{re:221eta643}) et $\eta_x<2\eta_z$.\\
 
 On rappelle que   $(\mu_x,\mu_y,\mu_z)\in \{(2,2,1),(3,2,2),(4,3,2),(6,4,3)\}$.
  Comme  la s\'erie formelle   $\chi$ ou  la s\'erie formelle  $\psi$  n'est pas  inversible, le  $v$-ordre  $\nu_v\chi$ ou le $v$-ordre $\nu_v\psi$  n'est pas nul (voir Proposition \ref{pr:FI-g}). Alors, on a $(\mu_x,\mu_y,\mu_z)\in \{(4,3,2),(6,4,3)\},$ d'o\`u    $1\leq \mu_y-\eta_y\leq2$. Par cons\'equent, la s\'erie formelle $\varphi$ n'est pas inversible, ce qui est une contradiction.
\end{proof}

 Maintenant, on consid\`ere le cas $i)$ de la Proposition \ref{pr:etaconeE6}.

 \begin{proposition}
\label{pr:2e6}
On suppose que $2\eta_x=3\eta_y$ et $\eta_x<2\eta_z$. Alors, les s\'eries formelles $\chi$, $\varphi$ et $\psi$ sont inversibles.
\end{proposition}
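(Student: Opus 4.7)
J'envisagerai la preuve par analyse de cas sur $(\eta_x,\eta_y,\eta_z)$. La condition $2\eta_x=3\eta_y$ jointe aux bornes de la Remarque \ref{re:221eta643} force $(\eta_x,\eta_y)\in\{(3,2),(6,4)\}$ ; combinée à $\eta_x<2\eta_z\le 2\mu_z\le 6$ et à la liste de la Proposition \ref{pr:musygemiE6} (avec $\eta_i\le\mu_i$), cela réduit les possibilités aux triplets $(6,4,3)$, $(3,2,3)$ et $(3,2,2)$. Le cas $(6,4,3)$ oblige $(\mu_x,\mu_y,\mu_z)=(\eta_x,\eta_y,\eta_z)$, et l'inversibilité s'ensuit directement par la Proposition \ref{pr:FI-g}.

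Pour $(3,2,3)$ la liste impose $(\mu_x,\mu_y,\mu_z)=(6,4,3)$, donc $\psi$ est inversible. Le Lemme \ref{le:643} donne $\nu_v(\chi\pm it^3\psi^2)=3$ et la factorisation $-\varphi^3=(\chi+it^3\psi^2)(\chi-it^3\psi^2)$, les deux facteurs étant premiers entre eux ($\psi$ inversible, $t\nmid\chi$). Par $\FI(\varphi)\le 2$, ou bien la décomposition est triviale (un facteur inversible, forçant $\chi$ inversible et contredisant $\mu_x>\eta_x$), ou bien $\varphi=\varphi_1\varphi_2$ avec $\varphi_\pm$ irréductibles non associés et $\chi\pm it^3\psi^2=u_\pm\varphi_\pm^3$. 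Dans ce dernier cas, on pose $u_\pm=J_\pm^3$ dans $\overline K[[s,t]]$ et l'on écrit
\[2it^3\psi^2=\prod_{\omega^3=1}\bigl(J_+\varphi_+-\omega J_-\varphi_-\bigr).\]
Chaque $\varphi_\pm$ étant irréductible non inversible, chaque facteur du produit s'annule à l'origine ; le produit étant un associé de $t^3$ dans $\overline K[[s,t]]$ (puisque $\psi$ est inversible), la factorisation unique force chaque facteur à être divisible par $t$, d'où $3J_+\varphi_+=\sum_\omega(J_+\varphi_+-\omega J_-\varphi_-)$ divisible par $t$, contredisant $t\nmid\varphi$.

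Le cas $(3,2,2)$ se ramène par le Lemme \ref{le:varinv} à prouver $\mu_y=2$, c'est-à-dire à exclure $(\mu_x,\mu_y,\mu_z)\in\{(4,3,2),(6,4,3)\}$. Le cas $(4,3,2)$ est immédiat : $\psi$ est inversible, $\FI(\chi),\FI(\varphi)\le 1$ et une évaluation à l'origine exclut $\chi$ ou $\varphi$ inversible (sinon le Lemme \ref{le:varinv} les rendrait tous inversibles), donc $\chi,\varphi$ sont irréductibles et la coprimalité dans $(\chi+it\psi^2)(\chi-it\psi^2)=-\varphi^3$ avec $\varphi$ irréductible force un facteur inversible, d'où $\chi$ inversible, contradiction. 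Le cas $(6,4,3)$ combine toutes les techniques précédentes : le pgcd $\lambda$ de $\chi\pm it\psi^2$ divise $\psi^2$ (car $t\nmid\chi$) et, par $\FI(\psi)\le 1$, vaut un inversible ou un associé de $\psi^k$, $k\in\{1,2\}$. Pour $\lambda$ inversible, j'adapterai l'argument de factorisation unique précédent en distinguant $\psi$ inversible (où le produit est un associé de $t$ et chaque facteur doit l'absorber, d'où $t\mid\varphi_+$, contradiction) et $\psi$ irréductible (où l'énumération des répartitions de puissances de $t$ et de $\psi$ dans les trois facteurs conduit à $\psi\mid\varphi_\pm$, contredisant la coprimalité). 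Pour $\lambda\sim\psi^k$, l'équation principale entraîne $\psi\mid\chi,\varphi$, le Lemme \ref{le:pre0E6} fournit $\psi^2\mid\varphi$ et donc $\varphi=u\psi^2$ via $\FI(\varphi)\le 2$ ; en posant $\chi=\psi^2\chi''$ et en divisant par $\psi^4$, l'équation principale devient $(\chi''+it)(\chi''-it)=-u^3\psi^2$, dont la coprimalité (le pgcd divise $\chi''$ et $t$, premiers entre eux) et l'irréductibilité de $\psi$ forcent $\chi''$ inversible, contredisant $\nu_v\chi=3$. L'obstacle principal sera précisément la gestion simultanée des facteurs $t$ et $\psi$ dans ce dernier sous-cas.
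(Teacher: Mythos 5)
Votre preuve est correcte et suit pour l'essentiel la m\^eme approche que celle de l'article : m\^eme r\'eduction \`a $(\eta_x,\eta_y)=(3,2)$ via la Remarque \ref{re:221eta643}, m\^eme factorisation $\chi^2+t^{2a}\psi^4=(\chi+it^{a}\psi^2)(\chi-it^{a}\psi^2)$, m\^emes ingr\'edients (Propositions \ref{pr:FI-g} et \ref{pr:musygemiE6}, Lemmes \ref{le:varinv}, \ref{le:pre0E6}, \ref{le:643}) et m\^eme argument final par racines cubiques de l'unit\'e dans $\overline{K}[[s,t]]$. Les diff\'erences sont d'ordre organisationnel (\'enum\'eration explicite des triplets $(\eta_x,\eta_y,\eta_z)$ et des vecteurs $(\mu_x,\mu_y,\mu_z)$, d\'ecoupage selon le pgcd $\lambda$ des deux facteurs avec la substitution $\varphi=u\psi^2$, $\chi=\psi^2\chi''$ dans le sous-cas $\lambda\sim\psi^k$, au lieu du d\'ecoupage de l'article selon que les deux facteurs irr\'eductibles de $\varphi$ sont associ\'es ou non) ; notez seulement que le triplet $(6,4,3)$ que vous listez ne v\'erifie pas $\eta_x<2\eta_z$ et est donc vacant, ce qui est sans cons\'equence.
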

 \begin{proof}
Raisonnons par l'absurde. En vertu du Lemme \ref{le:varinv} on suppose que $\varphi$ n'est pas inversible. D'apr\`es la Proposition \ref{pr:FI-g}, on a $\mu_y-\eta_y\geq 1$.\\
 
Comme  $2\eta_x=3\eta_y$, on obtient que  $\eta_x$ (resp. $\eta_y$) est divisible par $3$ (resp. $2$). Par cons\'equent, on a  $(\eta_x,\eta_y)=(3,2)$ et $2\leq \eta_z\leq 3$, car  $2\leq \eta_x\leq 6$, $2\leq \eta_y\leq 4$, $1\leq \eta_z\leq 3$ (voir la Remarque \ref{re:221eta643}) et $\eta_x<2\eta_z$.\\

On rappelle que $(\mu_x,\mu_y,\mu_z)\in \{(2,2,1),(3,2,2),(4,3,2),(6,4,3)\}$. Comme $\eta_y=2$ et $\mu_y-\eta_y\geq 1$, on obtient que $(\mu_x,\mu_y,\mu_z)\in \{(4,3,2),(6,4,3)\}$. En particulier, $1 \leq \mu_y-\eta_y\leq 2$.

 D'apr\`es la Proposition \ref{pr:FI-g},  le nombre de facteurs irr\'eductibles compt\'es avec multiplicit\'e de $\varphi$  est   inf\'erieur ou \'egal \`a $2$.\\

Au moyen de  la Relation (\ref{eq:1})  on obtient la relation suivante:

$$ \varphi^3+(\chi+it^{2\eta_z-\eta_x}\psi^2)(\chi-it^{2\eta_z-\eta_x}\psi^2)=0.$$

On remarque que la s\'erie formelle $\chi+it^{2\eta_z-\eta_x}\psi^2$ est inversible  si et seulement si la s\'erie formelle $\chi-it^{2\eta_z-\eta_x}\psi^2$ l'est. Par cons\'equent, si $\chi+it^{2\eta_z-\eta_x}\psi^2$ est inversible, alors $\varphi$ l'est.
 On obtient donc  que les s\'eries formelles  $\chi+it^{2\eta_z-\eta_x}\psi^2$ et  $\chi-it^{2\eta_z-\eta_x}\psi^2$ ne sont pas inversibles.

La s\'erie formelle  $\varphi$ n'est pas irr\'eductible, car si $\varphi$ est irr\'eductible, alors  $\varphi$ divise $\chi$ et $\psi$, ce qui rentre en contradiction avec le Lemme  \ref{le:pre0E6}. On a donc  $\mu_y-\eta_y=2$ (voir la Proposition \ref{pr:FI-g}), ce qui implique que $(\mu_x,\mu_y,\mu_z)=(6,4,3)$, car $\eta_y=2$. 
On rappelle que  $2\leq \eta_z\leq 3$ et $\mu_z=3$, d'o\`u  la s\'erie formelle $\psi$  est irr\'eductible ou inversible, parce que $\mu_z-\eta_z\leq 1$  (voir la Proposition \ref{pr:FI-g}).\\

Comme $\varphi$ n'est pas irr\'eductible, le nombre de facteurs irr\'eductibles de $\varphi$ est \'egal \`a $2$. On a donc deux cas:\\

\begin{itemize}
\item[Cas 1).] La s\'erie formelle $\varphi$ est le produit de deux s\'eries formelles irr\'eductibles  associ\'ees.

\item[Cas 2).] La s\'erie formelle $\varphi$ est le produit de deux s\'eries formelles irr\'eductibles  non associ\'ees.\\

\end{itemize}

 Maintenant, on va montrer que dans ces deux cas  on arrive \`a  des contradictions, ce qui d\'emontre que les s\'eries formelles $\chi$, $\varphi$ et $\psi$ sont inversibles.\\

Cas 1).  On suppose que $\varphi=\varphi_1\varphi_2$,  o\`u  $\varphi_1$ et $\varphi_2$  sont deux s\'eries formelles irr\'eductibles associ\'ees.\\

Comme le corps $K$ est alg\'ebriquement clos,   on peut  supposer que $\varphi_1=\varphi_2$.\\

D'apr\`es la Relation (\ref{eq:1}), on a:

$$ \varphi_1^6+(\chi+it^{2\eta_z-\eta_x}\psi^2)(\chi-it^{2\eta_z-\eta_x}\psi^2)=0,$$
ce qui \'equivaut au syst\`eme de relations  suivant:

$$\varphi_1^3 I=-\chi - it^{2\eta_z-\eta_x}\psi^2,\;\; \varphi_1^{3}I^{-1}=\chi - it^{2\eta_z-\eta_x}\psi^2,$$
o\`u  $I$ est une s\'erie formelle inversible de $K[[s,t]]$. En effet, le vecteur  $(\mu_x,\mu_y,\mu_z)$ est \'egal \`a $(6,4,3)$, donc   le $v$-ordre $\nu_{v}(\chi + it^{2\eta_z-\eta_x}\psi^2)$ est \'egal au $v$-ordre $\nu_{v}(\chi - it^{2\eta_z-\eta_x}\psi^2)$ (voir le Lemme \ref{le:643}), ce qui implique le syst\`eme de relations ci-dessus. Par cons\'equent, on a la relation suivante:
  
 \begin{center} $2it^{2\eta_z-\eta_z}\psi^2=-(I+ I^{-1})\varphi_1^3.$\end{center}

On rappelle que la s\'erie formelle $\psi$  est irr\'eductible ou inversible,  parce que $\mu_z-\eta_z\leq 1$. Alors, la s\'erie formelle irr\'eductible $\varphi_1$ divise $t^{2\eta_y-\eta_z}$, d'o\`u une contradiction.  \\  

Cas 2). On suppose que $\varphi=\varphi_1\varphi_2$  o\`u $\varphi_1$  et  $\varphi_2$  sont s\'eries formelles irr\'eductibles non associ\'ees.\\

D'apr\`es la Relation (\ref{eq:1}), on a :

$$ \varphi_1^3\varphi_2^3+(\chi+it^{2\eta_z-\eta_x}\psi^2)(\chi-it^{2\eta_z-\eta_x}\psi^2)=0.$$

Au moyen des Lemmes \ref{le:643} et \ref{le:pre0E6} on obtient que cette relation \'equivaut au syst\`eme de relations  suivant:

\begin{center}$\varphi_1^3 I_1=-\chi - it^{2\eta_z-\eta_x}\psi^2,\;\; \varphi_2^{3}I_2=\chi - it^{2\eta_z-\eta_x}\psi^2,$\end{center}
o\`u  $I_1$, $I_2$  sont deux s\'eries formelles inversibles de $K[[s,t]]$ telles que $I_1I_2=1$, d'o\`u\\

\begin{center}
$-2it^{2\eta_z-\eta_x}\psi^2=\varphi_1^3I_1+\varphi_2^3I_2=\prod_{i=1}^3(\varphi_1J_1+w_i\varphi_2J_2),$
\end{center}
o\`u les $w_i$ sont les racines i-i\`emes de l'unit\'e et $J_1$, $J_2$ sont deux s\'eries formelles inversibles telles que   $J_1^3=I_1$ et $J_2^3=I_2$.

 On rappelle que la s\'erie formelle $\psi$  est irr\'eductible ou inversible.
Si $\psi$ est inversible,  alors les s\'eries formelles $\varphi_1$ et $\varphi_2$ sont inversibles ou divisibles par $t$. On a donc une contradiction dans les deux cas. Ainsi, on obtient que la s\'erie formelle $\psi$ est irr\'eductible. 

Comme $t$ ne divise pas $\varphi_1$ et $\varphi_2$, la s\'erie formelle $\psi$ divise $\varphi_1$ et $\varphi_2$. Par cons\'equent, la s\'erie $\psi$ divise $t$, d'o\`u une contradiction.  Ceci ach\`eve la d\'emonstration de la proposition.\end{proof}

Les deux lemmes suivants sont tr\`es importants dans la d\'emonstration.

\begin{lemme}
\label{le:1.1}
On suppose que  $(\mu_x,\mu_y,\mu_z)=(4,3,2)$. Alors, les s\'eries formelles $\chi$, $\varphi$ et $\psi$ sont inversibles.
\end{lemme}

\begin{proof} Raisonnons par l'absurde. En vertu du Lemme \ref{le:varinv} on suppose que $\varphi$ n'est pas inversible.
 Par cons\'equent, $\mu_y-\eta_y\geq 1$ (voir la Proposition \ref{pr:FI-g}).\\

En vertu de la Proposition \ref{pr:etaconeE6}, le vecteur $(\eta_x,\eta_y,\eta_z)$ satisfait une des relations suivantes:

 \begin{center}
  $\eta_x=2\eta_z$ et $2\eta_x<3\eta_y$; $\;\;\;\;$  $2\eta_x=3\eta_y$ et $\eta_x<2\eta_z$.\\
 \end{center} 
 
 D'apr\`es la proposition \ref{pr:2e6}, si on a $2\eta_x=3\eta_y$ et $\eta_x<2\eta_z$, alors  les s\'eries formelles $\chi$, $\varphi$ et $\psi$ sont inversibles. Par cons\'equent, le vecteur  $(\eta_x,\eta_y,\eta_z)$ satisfait la relation $\eta_x=2\eta_z\;\mbox{et}\; 2\eta_x<3\eta_y$. En particulier, $2$ divise $\eta_x$.  Ainsi, on obtient que $\eta_x\in \{2,4\}$, car $2\leq \eta_x\leq \mu_x=4$ (voir la Remarque \ref{re:221eta643}).
On rappelle que  $\mu_y-\eta_y\geq 1$. Comme on a $ \eta_y\geq 2$ (Remarque \ref{re:221eta643}) et $\mu_y=3$,   on obtient que $\eta_y=2$. On a donc $\eta_x<3$,  car  $2\eta_x<3\eta_y$. Ainsi, on obtient que $(\eta_x,\eta_y,\eta_z)=(2,2,1)$. Par cons\'equent, on a les $v$-ordres suivants:

$$  \nu_v\chi=2,\; \nu_v\varphi=1,\;\mbox{et}\; \nu_v\psi=1.$$

Comme les $v$-ordres $\nu_v\chi$, $\nu_v\varphi$ et $\nu_v\psi$ sont strictement positifs, les s\'eries formelles $\chi$, $\varphi$ et $\psi$ ne sont pas inversibles. De plus, les s\'eries formelles $\varphi$ et $\psi$ sont irr\'eductibles  (voir la Proposition \ref{pr:FI-g}).\\

 D'apr\`es la Relation (\ref{eq:1}), on a:

$$ t^{2}\varphi^3+(\chi+i\psi^2)(\chi-i\psi^2)=0.$$

On remarque que $t$ divise $\chi+i\psi^{2}$ si et seulement si $t$ ne divise pas $\chi-i\psi^{2}$, car $t$ ne divise pas  $\chi$  et $ \varphi$. On peut donc supposer, sans perte de g\'en\'eralit\'e,  que  $t^{2}$ divise $\chi+i\psi^2$.\\

Si la s\'erie formelle $\varphi$ divise  $\chi+i\psi^2$ et $\chi-i\psi^2$, alors $\varphi$ divise $\chi$ et $\psi$, ce qui rentre en contradiction avec le Lemme \ref{le:pre0E6}.
Comme les s\'eries formelles $\chi$ et $\psi$ ne sont pas inversibles et  par hypoth\`ese $t^2$ divise $\chi+i\psi^2$, la relation  $ t^{2}\varphi^3+(\chi+i\psi^2)(\chi-i\psi^2)=0$ \'equivaut au syst\`eme de relations suivant:
  
  $$t^2I=\chi+i\psi^2,\;\varphi^3 I^{-1}=-\chi+i\psi^2,$$
o\`u $I\in K[[s,t]]$ est un s\'erie formelle inversible.  Ainsi, on obtient la relation suivante:

$$2i\psi^2=t^2I+\varphi^3 I^{-1}.$$

Le corps $K$ est alg\'ebriquement clos, donc il existe $I_1\in K[[s,t]]^{\star}$ tel que $I_1^2=I$.
On a donc:

$$(\kappa \psi -tI_1t)(\kappa \psi+tI_1)=\varphi^3I^{-1},$$
o\`u $\kappa^2=2i$.  Comme les s\'eries formelles $\kappa \psi -tI_1$ et $\kappa \psi +tI_1$ ne sont pas inversibles et la s\'erie formelle $\varphi$ est irr\'eductible, $\varphi$ divise  $\kappa \psi -tI_1$ et $\kappa \psi +tI_1$ ce qui implique que $\varphi$ divise  la s\'erie formelle   $tI_1$, d'o\`u une contradiction car $\varphi$ n'est pas divisible par $t$.
\end{proof}

\begin{lemme}
\label{le:1.2}
On suppose que  $(\mu_x,\mu_y,\mu_z)=(6,4,3)$,  alors les s\'eries formelles $\chi$, $\varphi$ et $\psi$ sont inversibles.
\end{lemme}

\begin{proof} Raisonnons par l'absurde. En vertu du Lemme \ref{le:varinv} on suppose que $\varphi$ n'est pas inversible.\\

En vertu de la Proposition \ref{pr:etaconeE6}, le vecteur $(\eta_x,\eta_y,\eta_z)$ satisfait une des relations suivantes:

 \begin{center}
  $\eta_x=2\eta_z$ et $2\eta_x<3\eta_y$; $\;\;\;\;$  $2\eta_x=3\eta_y$ et $\eta_x<2\eta_z$.\\
 \end{center} 
 
 D'apr\`es la Proposition \ref{pr:2e6}, si on a $2\eta_x=3\eta_y$ et $\eta_x<2\eta_z$, alors  les s\'eries formelles $\chi$, $\varphi$ et $\psi$ sont inversibles. Par cons\'equent, le vecteur  $(\eta_x,\eta_y,\eta_z)$ satisfait la relation $\eta_x=2\eta_z\;\mbox{et}\; 2\eta_x<3\eta_y$.\\

On rappelle que  $2\leq \eta_x\leq 6$, $2 \leq \eta_y\leq 4$ et $1\leq \eta_z\leq 3$ (voir la Remarque \ref{re:221eta643}). Comme la s\'erie formelle $\varphi$ n'est pas inversible, on a $2\leq \eta_y\leq 3$, car $\eta_y<\mu_y=4$ (voir la Proposition \ref{pr:FI-g}).\\

Si on a $\eta_y=3$, alors on a $\mu_y-\eta_y=1$. Par cons\'equent, la s\'erie formelle $\varphi$ est irr\'eductible. \\

D'apr\`es la Relation (\ref{eq:1}), on a:

$$ t^{9-2\eta_x}\varphi^3+(\chi+i\psi^2)(\chi-i\psi^2)=0.$$

On remarque que $t$ divise $\chi+i\psi^{2}$ si et seulement si $t$ ne divise pas $\chi-i\psi^{2}$, car $t$ ne divise pas  $\chi$  et $ \psi$. On peut donc supposer, sans perte de g\'en\'eralit\'e,  que $t^{9-2\eta_x}$ divise $\chi+i\psi^2$. On a donc  le syst\`eme de relations  suivant:

$$t^{9-2\eta_x}\varphi^j I= \chi + i\psi^2,\;\; \varphi^{3-j}I^{-1}=\chi - i\psi^2,$$
o\`u $0\leq j \leq 3$ et $I$ est un \'el\'ement inversible de $K[[s,t]]$. \\

Si on a $j=3$, alors la s\'erie formelle $\chi - i\psi^2$ est inversible, d'o\`u on a $\nu_v(\chi - i\psi^2)=0$. 
Le Lemme \ref{le:643} et  la propri\'et\'e multiplicative du $v$-ordre montrent que  le $v$-ordre $\nu_v(\chi + i\psi^2)$ est \'egal \`a z\'ero, ce qui implique que le $v$-ordre de $t^{9-2\eta_x}\varphi^3$ est \'egal \`a z\'ero, d'o\`u une contradiction.\\

 Si on a $1\leq j\leq 2$, alors $\varphi$ divise les s\'eries formelles $\chi$ et $\psi$ ce qui rentre en contradiction avec le Lemme  \ref{le:pre0E6}. On a donc  le syst\`eme de relations  suivant:

$$t^{9-2\eta_x} I= \chi + i\psi^2,\;\; \varphi^{3}I^{-1}=\chi - i\psi^2.$$

Par cons\'equent,  le  $v$-ordre  $\nu_v(\chi + i\psi^2)$ (resp. le $v$-ordre  $\nu_v(\chi- i\psi^2)$) est \'egal \`a $9-2\eta_x$ (resp. est \'egal \`a 3). Le Lemme \ref{le:643} et la propri\'et\'e multiplicative du $v$-ordre montrent qu'on a  $\nu_v(\chi + i\psi^2)= \nu_v(\chi - i\psi^2)$, ce qui implique que $\eta_x=3$, d'o\`u la contradiction car $\eta_x=2\eta_z$.
Forc\'ement,  on a donc $\eta_y=2$.\\

Comme on a $\eta_x=2\eta_z\;\mbox{et}\; 2\eta_x<3\eta_y$  et  $\eta_y=2$, on obtient que  $(\eta_x,\eta_y,\eta_z)=(2,2,1)$, d'o\`u 

$$ t^{2}\varphi^3+(\chi+i\psi^2)(\chi-i\psi^2)=0.$$

Comme  on a $\nu_v\varphi=\mu_y-\eta_y=2$, la s\'erie formelle  $\varphi$ est au plus le produit de deux s\'eries formelles irr\'eductibles compt\'es avec multiplicit\'e, voir la Proposition  \ref{pr:FI-g}.\\

Alors,   pour la s\'erie formelle $\varphi$ on obtient le trois cas  suivants:\\

\begin{itemize}
\item[Cas 1).] la s\'erie formelle $\varphi$ est irr\'eductible;

\item[Cas 2).] la s\'erie formelle $\varphi$ est le produit de deux s\'eries formelles non associ\'ees;

\item[Cas 3).] la s\'erie formelle $\varphi$ est le produit de deux s\'eries formelles  associ\'ees.\\
\end{itemize}

Maintenant, on va montrer  que dans chaque cas ci-dessus  on obtient une contradiction,  ce qui implique que $\varphi$ est inversible, d'o\`u le lemme.\\

On rappelle que dans le trois cas  on a la relation suivante:

$$ t^{2}\varphi^3+(\chi+i\psi^2)(\chi-i\psi^2)=0.$$

On remarque que $t$ divise $\chi+i\psi^{2}$ si et seulement si $t$ ne divise pas $\chi-i\psi^{2}$ car $t$ ne divise pas  $\chi$  et $ \varphi$. On peut donc supposer, sans perte de g\'en\'eralit\'e,  que $t^{2}$ divise $\chi+i\psi^2$.\\

 Cas 1). On suppose que $\varphi$  est une s\'erie formelle irr\'eductible.\\

 Si la s\'erie formelle $\varphi$ est irr\'eductible, alors la relation  $ t^{2}\varphi^3+(\chi+i\psi^2)(\chi-i\psi^2)=0$ \'equivaut au  syst\`eme de relations suivant:

$$-t^2\varphi^j I=\chi + i\psi^2,\;\; \varphi^{3-j}I^{-1}=\chi - i\psi^2\mbox{,}$$
o\`u $0\leq j \leq 3$ et $I$ est une s\'erie formelle inversible de $K[[s,t]]$.\\ 

Le Lemme \ref{le:643} et la propri\'et\'e multiplicative du $v$-ordre montrent  qu'on  a  $\nu_v(\chi + i\psi^2)= \nu_v(\chi - i\psi^2)$, ce qui implique que $2(1+j)=2(3-j)$ (on rappelle que le $v$-ordre $\nu_v\varphi$ est \'egal \`a $2$), d'o\`u $j=1$. Alors, la s\'erie formelle $\varphi$ divise les s\'eries formelles  $\chi$ et $\psi$, ce qui rentre en contradiction avec le Lemme \ref{le:pre0E6}.\\

 Cas 2).  On suppose que $\varphi=\varphi_1\varphi_2$ o\`u $\varphi_1$ et  $\varphi_2$  sont deux s\'eries formelles irr\'eductibles non associ\'ees.\\

Dans ce cas, les $v$-ordres  $\nu_v\varphi_1$ et $\nu_v\varphi_2$ sont \'egaux \`a $1$, car $\nu_v\varphi=2$.\\

De la m\^eme fa\c con que dans le  cas pr\'ec\'edent,  on a le syst\`eme de relations  suivant:

$$t^2\varphi_1^j \varphi_2^kI=-(\chi + i\psi^2),\;\; \varphi_1^{3-j}\varphi_2^{3-k}I^{-1}=(\chi - i\psi^2),$$
o\`u $0\leq j\leq 3,\;0\leq k\leq 3$ et $I$ est un \'el\'ement inversible de $K[[s,t]]$.\\ 

Le Lemme \ref{le:643} et la propri\'et\'e multiplicative du $v$-ordre montrent qu'on  a  $\nu_v(\chi + i\psi^2)= \nu_v(\chi - i\psi^2)$, ce qui implique que $2+j+k=6-j-k$, d'o\`u $j+k=2$.  Pour tous les  $j$ et $k$ tels que $j+k=2$, on rentre  en contradiction avec le Lemme \ref{le:pre0E6}.\\

 Cas 3).  On suppose que $\varphi=\varphi_1\varphi_2$,  o\`u  $\varphi_1$   et  $\varphi_2$  sont des s\'eries formelles irr\'eductibles associ\'ees.\\

Comme  le corps $K$ est alg\'ebriquement clos,   on peut   supposer que $\varphi_1=\varphi_2$. Dans ce cas, le $v$-ordre  $\nu_v\varphi_1$ est \'egal \`a $1$. \\

La relation $ t^{2}\varphi^3+(\chi+i\psi^2)(\chi-i\psi^2)=0$ \'equivaut \`a la relation:

$$ t^2\varphi_1^6+(\chi+i\psi^2)(\chi-i\psi^2)=0,$$
et par cons\'equent, on obtient le syst\`eme de relations suivant:

$$-t^2\varphi_1^j I=\chi + i\psi^2,\;\; \varphi_1^{6-j}I^{-1}=\chi - i\psi^2,$$
o\`u $0\leq j\leq 6$ et $I$ est un \'el\'ement inversible de $K[[s,t]]$. Le Lemme \ref{le:643} et la propri\'et\'e multiplicative du $v$-ordre montrent  qu'on  a  $\nu_v(\chi + i\psi^2)= \nu_v(\chi - i\psi^2)$, ce qui implique que $2+j=6-j$, d'o\`u $j=2$. On a donc: 

 $$2i\psi^2=-(t^2I+\varphi_1^2 I^{-1})\varphi_1^2 .$$

Comme $K$ est alg\'ebriquement clos, il existe une s\'erie formelle inversible $I_1$ appartenant \`a $K[[s,t]]$ telle que  $I_1^2=I$. Alors, on a:

$$2i\psi^2=-(tI_1+i\varphi_1 I_1^{-1})(tI_1-i\varphi_1 I_1^{-1})\varphi_1^2 .$$

Soient $r:=(tI_1+i\varphi_1 I_1^{-1})$ et $q:=(tI_1-i\varphi_1 I_1^{-1})$. Les s\'eries formelles 
$r$ et $q$ ne sont pas inversibles car  les s\'eries formelles $tI_1$ et $\varphi_1$ ne le sont pas.\\

Comme on a $(\eta_x,\eta_y,\eta_z)=(2,2,1)$ et $(\mu_x,\mu_y,\mu_z)=(6,4,3)$,  le $v$-ordre $\nu_v\psi$ est \'egal \`a $2$ (on remarque que $\mu_z-\eta_z=2$),  ce qui implique que $\psi$ est au plus le produit de deux s\'eries formelles irr\'eductibles compt\'ees avec multiplicit\'e (voir la Proposition \ref{pr:FI-g}). Par cons\'equent, la s\'erie formelle $\psi^2$ est au plus le produit de quatre s\'eries formelles irr\'eductibles compt\'ees avec multiplicit\'e,  ce qui implique que les s\'eries formelles $r$ et $q$ sont irr\'eductibles.\\

Les s\'eries formelles $r$ et $q$ ne sont pas associ\'ees  \`a la s\'erie formelle $\varphi_1$ parce que $t$ ne divise pas $\varphi_1$.\\

 On obtient  donc:
$$(tI_1+i\varphi_1 I_1^{-1})=J(tI_1-i\varphi_1 I_1^{-1}),$$
o\`u $J$ est une s\'erie formelle inversible.  En effet, la s\'erie formelle $r$ divise $\psi^2$, alors  $r$ divise $\psi$, d'o\`u $r^2$ divise  $\psi^2$. Comme $r$ ne divise pas $\varphi_1$, $r$ divise $q$. Par cons\'equent $r$ et $q$ sont deux  s\'eries irr\'eductibles associ\'ees.\\

 Ainsi,  on obtient la relation suivante:\\
$$t(J-1)I_1=i\varphi_1(J+1)I_1^{-1}.$$

On remarque que $J-1$ ou $J+1$ est inversible.
Si  $J-1$ est  inversible, alors la s\'erie $\varphi_1$ est inversible ou $t$ divise $\varphi_1$; ce qui est absurde. Si  $J+1$ est inversible,  alors $t$ divise $\varphi_1$, ce qui est aussi une contradiction. Ceci ach\`eve la d\'emonstration du lemme. \end{proof}

Maintenant, on consid\`ere le cas {\it ii}) de la proposition \ref{pr:etaconeE6}.

\begin{proposition}
\label{pr:1e6}
On suppose que $\eta_x=2\eta_z$  et $2\eta_x<3\eta_y$. Alors, les  s\'eries formelles $\chi$, $\varphi$ et $\psi$ sont inversibles.

\end{proposition}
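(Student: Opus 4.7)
The plan is to argue by contradiction, combining Lemma \ref{le:varinv} with a case analysis on the vector $(\mu_x,\mu_y,\mu_z)$, and then invoking Lemmas \ref{le:1.1} and \ref{le:1.2} which already cover the two non-trivial possibilities.

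First I would suppose that one among $\chi$, $\varphi$, $\psi$ is not invertible. By Lemma \ref{le:varinv}, this forces $\varphi$ to be non-invertible; by Proposition \ref{pr:FI-g}, this gives $\mu_y-\eta_y\geq 1$, hence $\mu_y>\eta_y$. Next, the Remark \ref{re:221eta643} yields $\eta_y\geq 2$, so $\mu_y\geq 3$. Combined with the list of Proposition \ref{pr:musygemiE6}, namely $(\mu_x,\mu_y,\mu_z)\in\{(2,2,1),(3,2,2),(4,3,2),(6,4,3)\}$, the possibilities $\mu_y=2$ are ruled out, and only $(\mu_x,\mu_y,\mu_z)\in\{(4,3,2),(6,4,3)\}$ remain.

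Now I would invoke directly Lemma \ref{le:1.1} in the case $(\mu_x,\mu_y,\mu_z)=(4,3,2)$ and Lemma \ref{le:1.2} in the case $(\mu_x,\mu_y,\mu_z)=(6,4,3)$: each of these lemmas asserts that under the stated value of $(\mu_x,\mu_y,\mu_z)$, the three formal series $\chi$, $\varphi$, $\psi$ must all be invertible. This contradicts the standing assumption that $\varphi$ is not invertible.

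The proof of Proposition \ref{pr:1e6} is therefore essentially a reduction to the two technical lemmas already established: the hard work has been done in Lemma \ref{le:1.2}, where the detailed factorization analysis of the relation $t^{2}\varphi^3+(\chi+i\psi^2)(\chi-i\psi^2)=0$ (using the key equality $\nu_v(\chi+i\psi^2)=\nu_v(\chi-i\psi^2)$ from Lemma \ref{le:643} and the non-divisibility Lemma \ref{le:pre0E6}) rules out every factorization type of $\varphi$. No new obstacle appears at the level of Proposition \ref{pr:1e6} itself; the only point requiring care is to check that the hypothesis $\eta_x=2\eta_z$, $2\eta_x<3\eta_y$ is compatible with the values $(\mu_x,\mu_y,\mu_z)\in\{(4,3,2),(6,4,3)\}$ used in Lemmas \ref{le:1.1} and \ref{le:1.2}, which is immediate since those lemmas do not impose the complementary case $2\eta_x=3\eta_y$, $\eta_x<2\eta_z$ (that alternative is disposed of separately by Proposition \ref{pr:2e6}, invoked inside their proofs).
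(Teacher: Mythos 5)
Your proposal is correct and follows essentially the same route as the paper: reduce by contradiction via Lemma \ref{le:varinv} to the case $\varphi$ non inversible, use $\eta_y\geq 2$ (Remarque \ref{re:221eta643}) together with $\mu_y>\eta_y$ and Proposition \ref{pr:musygemiE6} to force $(\mu_x,\mu_y,\mu_z)\in\{(4,3,2),(6,4,3)\}$, then conclude with Lemmes \ref{le:1.1} and \ref{le:1.2}. Your closing remark on the compatibility of the hypothesis with those lemmas is also accurate, since they assume only the value of $(\mu_x,\mu_y,\mu_z)$ and dispose of the alternative case through Proposition \ref{pr:2e6}, so no circularity arises.
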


\begin{proof}
Raisonnons par l'absurde. En vertu du Lemme \ref{le:varinv}, on suppose que $\varphi$ n'est pas inversible.\\

On rappelle que  $2\leq \eta_x\leq 6$, $2 \leq \eta_y\leq 4$ et $1\leq \eta_z\leq 3$ (voir la Remarque \ref{re:221eta643}). Comme on a  $\eta_x=2\eta_z$  et $2\eta_x<3\eta_y$,  on obtient que  $(\eta_x,\eta_z)\in\{(2,1),(4,2)\}$ et  que $2\leq \eta_y \leq 4$.\\

On rappelle que $(\mu_x,\mu_y,\mu_z)\in \{(2,2,1),(3,2,2),(4,3,2),(6,4,3)\}$. Comme $\varphi$ n'est pas inversible et  $2\leq \eta_y \leq 4$, le vecteur  $(\mu_x,\mu_y,\mu_z)\in \{(4,3,2),(6,4,3) \}$. En vertu des Lemmes \ref{le:1.1} et \ref{le:1.2}, si le vecteur  $(\mu_x,\mu_y,\mu_z)$ appartient \`a l'ensemble $ \{(4,3,2),(6,4,3) \}$, alors les s\'eries formelles $\chi$, $\varphi$ et $\psi$ sont inversibles, d'o\`u une  contradiction. \end{proof}

La d\'emonstration de la bijectivit\'e de l'application de Nash pour la singularit\'e de type $\EE_6$  r\'esulte  des propositions \ref{pr:invserBP}, \ref{pr:etaconeE6}, \ref{pr:2e6} et \ref{pr:1e6}.
\subsection{La singularit\'e de type $\EE_7$}

La preuve de la bijectivit\'e de l'application de Nash pour le cas de la singularit\'e de type  $\EE_7$ (Th\'eor\`eme \ref{th:nashDP}) repose sur la construction d'une $G$-d\'esingularisation de l'\'eventail de Newton et sur des propri\'et\'es, pour  $\EE_7$, des s\'eries formelles $\chi$, $\varphi$ et $\psi$ analogues aux s\'eries d\'efinies pour $\B{p}{q}$ ou $\EE_6$. Dans la suite, on donne un r\'esum\'e de la preuve.\\

Soit $S$ l'hypersurface normale de $\AF_{\KK}^3$ donn\'ee par l'\'equation $x^2+y(y^2+z^3)=0$. L'hypersurface $S$ a un unique point singulier de type $\EE_7$ \`a l'origine de $\AF^3$. 

On consid\`ere l'\'eventail de Newton $\EN{f}$ associ\'e \`a $\f:=x^2+y(y^2+z^3)$ et on note $\tau_1$ (resp. $\tau_2$, $\tau_3$)  le c\^one engendr\'e par les vecteurs $(1,0,0)$ (resp. $(1,2,0)$, $(0,0,1)$) et $(9,6,4)$ (voir la Figure \ref{fi:ENE7}).

\begin{figure}[!h]
\begin{tikzpicture}[font=\small]
   \draw[thick] (0,0)  -- (90:3) (0,0)  -- (210:3) (0,0) -- (310.89:1.984);
   \draw[thick] (330:3) --  (90:3)  -- (210:3) -- (330:3);
\draw (310.89:2.2) node[inner sep=-1pt,below=-1pt,rectangle,fill=white] {{\tiny $(1,2,0)$}};
\draw (0.5,0.2) node[inner sep=-1pt,below=-1pt,rectangle,fill=white] {{\tiny (9,6,4)}};
\draw (335:3.5) node[inner sep=-1pt,below=-1pt,rectangle,fill=white] {{\tiny $(0,1,0)$}};
\draw (325:1.2) node[inner sep=-1pt,below=-1pt,rectangle,fill=white] {$\tau_2$};
\draw (205:3.5) node[inner sep=-1pt,below=-1pt,rectangle,fill=white] {{\tiny $(1,0,0)$}};
\draw (200:1.5) node[inner sep=-1pt,below=-1pt,rectangle,fill=white] {$\tau_1$};
\draw (90:3.3) node[inner sep=-1pt,below=-1pt,rectangle,fill=white] {{\tiny $(0,0,1)$}};
\draw (100:1.5) node[inner sep=-1pt,below=-1pt,rectangle,fill=white] {$\tau_3$};
    \draw[fill=black]  (0,0) circle(0.7mm) (330:3) circle(0.7mm)  (90:3) circle(0.7mm)  (210:3) circle(0.7mm) (90:3) circle(0.7mm) 
(310.89:1.984) circle(0.7mm);

  \end{tikzpicture}

  \caption{\'Eventail de Newton $\EN{f}$}
\label{fi:ENE7}
\end{figure}

 Soit  $\GD{\f}$ une $G$-{\it subdivision r\'eguli\`ere}  de  $\EN{\f}$. On note  $\pi_{\mathcal{N}}:X(\EN{\f})\rightarrow \AF^{3}_{\KK}$  (resp. $\pi_{\mathcal{G}}:X(\GD{\f})\rightarrow X(\EN{\f})$)  le morphisme torique induit par la subdivision $\EN{\f}$ de $\RR^3_{\geq 0}$ (resp. $\GD{\f}$ de  $\EN{\f}$) et $S_{\mathcal{G}}$ le transform\'e strict de $S$ associ\'e au morphisme $\pi:=\pi_{\mathcal{G}}\circ \pi_{\mathcal{N}}$. 

De mani\`ere analogue \`a la Proposition \ref{pr:desingE6}, le morphisme  $\pi:S_{\mathcal{G}}\rightarrow S$ est la r\'esolution minimale de $S$.\\

Montrer la bijectivit\'e  de l'application de Nash $\mathcal{N}_{S}$ \'equivaut \`a
    montrer que, pour chaque diviseur essentiel $\E$, tous les $K$-wedges admissibles centr\'es en $N_{\E}$  se rel\`event \`a la r\'esolution minimale de $S$. Dans la suite, on donne une id\'ee de pourquoi un $K$-wedge admissible centr\'e en $N_{\E}$ se rel\`eve \`a $\SD$.\\

Soient $\E\in \Ess(S)$ et $\omega:\spec K[[s,t]]\rightarrow S$ un $K$-wedge admissible centr\'e en $N_{\E}$. On rappelle que $\alpha_{\E}$ est le point g\'en\'erique de $N_{\E}$. On consid\`ere les vecteurs suivants:

\begin{center} $(\mu_x,\mu_y,\mu_z):=(\ordt{\com{\alpha_{\E}}}(x),\ordt{\com{\alpha_{\E}}}(y), \ordt{\com{\alpha_{\E}}}(z))$;\end{center}

\begin{center} $(\eta_x,\eta_y,\eta_z): =(\ordt \omega^{\star }(x),\ordt \omega^{\star }(y),\ordt \omega^{\star }(z))$, \end{center}
o\`u $\com{\alpha_{\E}}$ (resp. $\com{\omega}$)  est le comorphisme de $\alpha_{\E}$ (resp. de $\omega$). On peut donc \'ecrire le comorphisme  de  $\omega$  de la fa\c con  suivante:
 
\begin{center} 
$\omega^{\star }(x)=t^{\eta_x}\chi,\;\omega^{\star }(y)=t^{\eta_y}\varphi,\; \omega^{\star }(z)=t^{\eta_z}\psi,$
\end{center}
o\`u les s\'eries formelles $\chi$, $\varphi$, $\psi$ ne sont pas divisibles par $t$. Le $K$-wedge $\omega$ doit satisfaire l'\'equation $x^2+y(y^2+z^{3})=0$, d'o\`u la relation suivante:
             
\begin{equation} t^{2\eta_x}\chi^2+t^{3\eta_y}\varphi^3+t^{\eta_y+3\eta_z}\varphi\psi^{3}=0. \label{eq:E7}
\end{equation}

En vertu des Propositions \ref{pr:invserBP}, si les s\'eries formelles $\chi$, $\varphi$ et $\psi$ sont inversibles, alors le $K$-wedge admissible $\omega$ centr\'e en $N_{\E}$ se rel\`eve \`a $\SD$.\\

D'apr\`es la Proposition \ref{pr:FI-g}, on peut majorer le nombre de facteurs irr\'eductibles compt\'es avec multiplicit\'e des s\'eries formelles $\chi$, $\varphi$ et $\psi$ \`a l'aide des $v$-ordres de la fa\c con suivant:

\begin{center}
$\FI(\chi)\leq \degt{\chi_v}= \nu_v\chi =\mu_x-\eta_x\leq 6$;

 $\FI(\varphi)\leq \degt{\varphi_v}= \nu_v\varphi= \mu_y-\eta_y\leq 4$;

$\FI(\psi)\leq \degt{\psi_v}= \nu_v\psi= \mu_z-\eta_z\leq 3$.
\end{center}
 
La proposition suivante est une propri\'et\'e importante du vecteur $(\mu_x,\mu_y,\mu_z)$.
  Pour un c\^one  $\tau$ dans $\EN{\f}$, on note $G\tau$ le syst\`eme g\'en\'erateur minimal du semi-groupe $\tau\cap \ZZ^3$.

\begin{proposition}
\label{pr:musygemiE7}
 Le vecteur $(\mu_x,\mu_y,\mu_z)$ appartient \`a l'union   $G\tau_1\cup G\tau_2\cup G\tau_3$, o\`u $\tau_1$  (resp. $\tau_2$ $\tau_3$)  est le c\^one engendr\'e par les vecteurs $(1,0,0)$ (resp. $(1,2,0)$,  $(0,0,1)$) et $(9,6,4)$ (voir la figure \ref{fi:ENE7}).  Autrement dit,  $(\mu_x,\mu_y,\mu_z)\in \{(3, 2, 2), (6, 4, 3), (9, 6, 4), (7, 5, 3),$ $(5, 4, 2), (3, 3, 1), (5, 3, 2) \}$.
\end{proposition}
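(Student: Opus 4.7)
La strat\'egie suit celle du Corollaire \ref{co:musygemi} et de la Proposition \ref{pr:musygemiE6}. Tout d'abord, on v\'erifie que les hypoth\`eses de la Proposition \ref{pr:musygemi-g} sont satisfaites par $S$: le polyn\^ome $\f=x^2+y(y^2+z^3)$ est irr\'eductible, l'origine est son unique point singulier, et un calcul direct sur chaque face compacte de $\Gamma(\f)$ montre qu'il est non-d\'eg\'en\'er\'e par rapport \`a sa fronti\`ere de Newton. On peut alors appliquer la Proposition \ref{pr:musygemi-g} pour obtenir $(\mu_x,\mu_y,\mu_z)\in E\GD{\f}\cap S_2\EN{\f}$.

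Ensuite, on localise ce vecteur dans la r\'eunion des trois c\^ones $\tau_1$, $\tau_2$, $\tau_3$. Puisque $\E$ est essentiel, les points {\it ii}) et {\it iii}) de la Proposition \ref{pr:sygemi} imposent $(\mu_x,\mu_y,\mu_z)\in\ZZ_{>0}^{3}$. Le $2$-squelette $S_2\EN{\f}$ est la r\'eunion de $\tau_1\cup\tau_2\cup\tau_3$ et des $2$-c\^ones contenus dans les plans de coordonn\'ees de $\RR_{\geq 0}^{3}$; ces derniers ne contenant aucun vecteur strictement positif, on en d\'eduit $(\mu_x,\mu_y,\mu_z)\in\tau_1\cup\tau_2\cup\tau_3$. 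Comme $\GD{\f}$ est une $G$-subdivision r\'eguli\`ere, les rayons de $\GD{\f}$ inclus dans chaque $\tau_i$ sont exactement les vecteurs de $G\tau_i$ (la r\'egularit\'e d'une subdivision d'un c\^one bidimensionnel force cette \'egalit\'e), et donc $(\mu_x,\mu_y,\mu_z)\in G\tau_1\cup G\tau_2\cup G\tau_3$.

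Il reste \`a calculer explicitement les trois bases de Hilbert. Pour $\tau_3=\langle(0,0,1),(9,6,4)\rangle$, l'appartenance au c\^one s'\'ecrit $(a,b,c)=(3k,2k,m)$ avec $k\geq 0$ entier et $m\geq\lceil 4k/3\rceil$; on en d\'eduit $G\tau_3=\{(0,0,1),(3,2,2),(6,4,3),(9,6,4)\}$. De mani\`ere analogue (param\'etrages $(a,3k,2k)$ avec $2a\geq 9k$ pour $\tau_1$ et relation $b=2a-3c$ avec $4a\geq 9c$ pour $\tau_2$), on trouve $G\tau_1=\{(1,0,0),(5,3,2),(9,6,4)\}$ et $G\tau_2=\{(1,2,0),(3,3,1),(5,4,2),(7,5,3),(9,6,4)\}$. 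En retranchant les vecteurs extr\'emaux $(1,0,0)$, $(0,0,1)$, $(1,2,0)$ situ\'es sur le bord de $\RR_{\geq 0}^{3}$, la r\'eunion des trois ensembles co\"incide exactement avec les sept vecteurs de l'\'enonc\'e. L'\'etape la plus technique est la v\'erification de l'ind\'ecomposabilit\'e des candidats interm\'ediaires dans $G\tau_2$, en particulier $(7,5,3)$, via une discussion par cas selon la valeur de la coordonn\'ee $c$ d'une d\'ecomposition \'eventuelle $v_1+v_2$; chaque cas se r\'esout par simple comparaison d'in\'egalit\'es, et cette v\'erification demeure \'el\'ementaire.
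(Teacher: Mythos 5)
Votre stratégie est exactement celle que l'article lui-même prévoit pour $\EE_7$ (le texte ne fait qu'esquisser ce cas et énonce la Proposition \ref{pr:musygemiE7} sans démonstration, par analogie avec le Corollaire \ref{co:musygemi} et la Proposition \ref{pr:musygemiE6}) : combiner les Propositions \ref{pr:musygemi-g} et \ref{pr:sygemi} pour placer $(\mu_x,\mu_y,\mu_z)$ parmi les vecteurs extrémaux strictement positifs de $\GD{\f}$ situés dans $S_2\EN{\f}$, puis calculer les bases de Hilbert de $\tau_1$, $\tau_2$, $\tau_3$. Vos trois calculs, $G\tau_1=\{(1,0,0),(5,3,2),(9,6,4)\}$, $G\tau_2=\{(1,2,0),(3,3,1),(5,4,2),(7,5,3),(9,6,4)\}$ et $G\tau_3=\{(0,0,1),(3,2,2),(6,4,3),(9,6,4)\}$, sont corrects et, après suppression des trois vecteurs ayant une coordonnée nulle, redonnent exactement les sept vecteurs de l'énoncé.

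Il y a cependant une étape qui ne tient pas telle quelle : l'affirmation que \emph{les hypothèses de la Proposition \ref{pr:musygemi-g} sont satisfaites}. Les Propositions \ref{pr:sygemi} et \ref{pr:musygemi-g} sont énoncées pour une hypersurface quasi-homogène qui, outre le fait d'avoir $O$ comme unique point singulier, \emph{ne contient pas de $T$-orbite de dimension $1$} ; pour $\f=x^2+y(y^2+z^3)$ cette condition est en défaut, puisque l'axe des $z$, $\{x=y=0\}$, est contenu dans $S$ (cela se lit sur l'éventail de Newton : le rayon $(1,2,0)$ subdivise le cône engendré par $(1,0,0)$ et $(0,1,0)$, de sorte que $\pi$ n'est pas un isomorphisme au-dessus de $\AF^3_{\KK}\backslash\{O\}$). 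En particulier, la partie « résolution plongée » de la Proposition \ref{pr:cr_nor-g} — précisément ce que la preuve de la Proposition \ref{pr:musygemi-g} utilise pour obtenir la transversalité du transformé strict avec $\D_{\rho_1}$ — ne peut pas être invoquée verbatim, et votre liste d'hypothèses vérifiées (irréductibilité, singularité isolée, non-dégénérescence) omet justement celle qui tombe en défaut ; c'est la condition que la Remarque \ref{re:x-et-y-no-div-h} sert à garantir pour $\B{p}{q}$. Pour compléter l'argument, il faut soit vérifier directement, via la Remarque $4.3$ et le Lemme $4.7$ de \cite{Oka87}, que les conclusions i)--iii) de la Proposition \ref{pr:sygemi} et celle de la Proposition \ref{pr:musygemi-g} subsistent pour cette équation quasi-homogène non commode, soit observer que seuls les diviseurs $\D_{\rho}$ avec $\rho\in\ZZ^3_{>0}$ interviennent ici (car $(\mu_x,\mu_y,\mu_z)\in\ZZ^3_{>0}$) et établir la transversalité de $X$ avec ces diviseurs au-dessus de l'origine, loin du transformé strict de l'axe des $z$. (Le résumé de l'article passe sous silence le même point, mais comme votre rédaction affirme explicitement les hypothèses, cette affirmation doit être réparée.)
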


Si on suppose les s\'eries $\chi$, $\varphi$ et $\psi$ non simultan\'ement inversibles,  on obtient la proposition suivante: 

\begin{proposition}
\label{pr:etaE7}
 S'il existe au moins  une s\'erie formelle parmi les s\'eries  $\chi$, $\varphi$, $\psi$ qui n'est pas  inversible, alors le vecteur $(\eta_x,\eta_y,\eta_z)$ satisfait une des relations suivantes:

 \begin{itemize}
 \item[i)] $2\eta_x=\eta_y+3\eta_z$ et $2\eta_x<3\eta_y$;

 \item[ii)] $2\eta_x=3\eta_y$ et $2\eta_y<3\eta_z$;

 \item[iii)] $2\eta_y=3\eta_z$ et $3\eta_y<2\eta_x$.
 \end{itemize}
\end{proposition}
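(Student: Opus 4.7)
My plan is to follow closely the argument of Proposition \ref{pr:etaconeE6}, using Proposition \ref{pr:etacone-g} to place $(\eta_x,\eta_y,\eta_z)$ on the 2-skeleton of the Newton fan $\EN{\f}$, and then to rule out everything except the relative interiors of the three 2-cones $\tau_1,\tau_2,\tau_3$ of Figure \ref{fi:ENE7} by combining strict positivity, integrality, and the bounds supplied by Proposition \ref{pr:musygemiE7}.

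First, Proposition \ref{pr:etacone-g} yields $(\eta_x,\eta_y,\eta_z)\in S_2\EN{\f}\cap\ZZ^3_{>0}$. Reading off Figure \ref{fi:ENE7}, one has $S_2\EN{\f}=\tau_1\cup\tau_2\cup\tau_3\cup S_2\RR^3_{\geq 0}$; strict positivity of the coordinates eliminates the three coordinate 2-faces of $\partial\RR^3_{\geq 0}$, so the vector belongs to $\tau_1\cup\tau_2\cup\tau_3$.

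The key step is to eliminate the only locus where two of these cones meet, namely the common central ray $\RR_{\geq 0}(9,6,4)$. Since $(9,6,4)$ is primitive (as $\gcd(9,6,4)=1$), any integer point on this ray is of the form $k\cdot(9,6,4)$ for some $k\in\ZZ_{\geq 1}$; the inequalities $\eta_x\leq\mu_x\leq 9$, $\eta_y\leq\mu_y\leq 6$, $\eta_z\leq\mu_z\leq 4$ coming from Proposition \ref{pr:musygemiE7} force $k=1$, hence $(\eta_x,\eta_y,\eta_z)=(9,6,4)=(\mu_x,\mu_y,\mu_z)$ and $\mu_x-\eta_x=\mu_y-\eta_y=\mu_z-\eta_z=0$. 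By Proposition \ref{pr:FI-g} this would make $\chi,\varphi,\psi$ all invertible, contradicting the hypothesis.

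Consequently $(\eta_x,\eta_y,\eta_z)$ lies in the relative interior of exactly one cone $\tau_i$. Reading off the equation and the strict inequality defining each open 2-cone from the corresponding compact edge of $\Gamma(\f)$ gives the three options: the edge $(0,3,0)(0,1,3)$ (cone $\tau_1$) yields $2\eta_y=3\eta_z$ together with $3\eta_y<2\eta_x$, which is case iii); the edge $(2,0,0)(0,1,3)$ (cone $\tau_2$) yields $2\eta_x=\eta_y+3\eta_z$ together with $2\eta_x<3\eta_y$, which is case i); and the edge $(2,0,0)(0,3,0)$ (cone $\tau_3$) yields $2\eta_x=3\eta_y$ together with $2\eta_y<3\eta_z$, which is case ii). The main obstacle, once the fan structure of $\EN{\f}$ and the list of admissible $(\mu_x,\mu_y,\mu_z)$ are in hand, is really just the primitivity argument on the central ray; after that, the enumeration over the open 2-cones is a direct translation of the $\EE_6$ argument.
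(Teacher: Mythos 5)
Your argument is correct and is exactly the intended one: the paper gives no detailed proof in the $\EE_7$ summary, but your reasoning reproduces the proof of the analogous Proposition \ref{pr:etaconeE6} — Proposition \ref{pr:etacone-g} plus strict positivity places $(\eta_x,\eta_y,\eta_z)$ in $\tau_1\cup\tau_2\cup\tau_3$, the central ray $(9,6,4)$ is excluded via $\eta\leq\mu\leq(9,6,4)$ and Proposition \ref{pr:FI-g}, and the relative interiors of the three cones give precisely the relations i)--iii). The only nitpick is attribution: the componentwise bound $\eta_x\leq\mu_x$, etc., comes from the admissibility of the wedge (as in Remark \ref{re:221eta643} and the proof of Proposition \ref{pr:FI-g}), not from Proposition \ref{pr:musygemiE7}, which only supplies the upper bounds on $(\mu_x,\mu_y,\mu_z)$.
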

La Proposition \ref{pr:etaE7} donne trois possibilit\'es pour le vecteur  $(\eta_x,\eta_y,\eta_z)$. Dans la suite, on consid\`ere chaque cas s\'epar\'ement pour obtenir des contradictions.\\

  Dans chaque cas du vecteur $(\eta_x,\eta_y,\eta_z)$, on peut simplifier la Relation (\ref{eq:E7}) et on peut la r\'ecrire  comme la somme de deux termes. Ceci permet  d'\'etablir des relations entre les  facteurs irr\'eductibles  des d\'ecompositions possibles des s\'eries  $\chi$, $\varphi$ et $\psi$. L'id\'ee est de trouver des contradictions  dans les diff\'erents cas de d\'ecomposition obtenus.\\

D'abord, \'enon\c cons le lemme suivant qui est obtenu en utilisant la Relation (\ref{eq:E7}).
 
\begin{lemme}
\label{le:varphi-no-irrE7}
 Si on suppose que  $\varphi=\varphi_1\varphi_2$,  o\`u  $\varphi_1$ est une s\'erie formelle  irr\'eductible,  alors  la s\'erie formelle $\varphi_1$ divise la s\'erie formelle $\varphi_2$. En particulier, la s\'erie formelle $\varphi$ n'est pas irr\'eductible. 
\end{lemme}
  Maintenant, on consid\`ere le premier cas de la Proposition \ref{pr:etaE7}.
\begin{proposition}
\label{pr:cas1etaE7}
   On suppose que $2\eta_x=\eta_y+3\eta_z$ et $2\eta_x<3\eta_y$. Alors, les s\'eries  $\chi$, $\varphi$, $\psi$ sont inversibles.
 \end{proposition}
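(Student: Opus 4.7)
The plan mirrors the strategy used for $\EE_6$ in Proposition \ref{pr:1e6}. First I would argue by contradiction, assuming that $\chi$, $\varphi$, $\psi$ are not all invertible, and establish an $\EE_7$ analog of Lemma \ref{le:varinv}: if $\varphi$ is invertible, then so are $\chi$ and $\psi$. The proof uses the enumeration of $(\mu_x,\mu_y,\mu_z)$ given in Proposition \ref{pr:musygemiE7}, together with relation (\ref{eq:E7}) and the $\FI$-bounds of Proposition \ref{pr:FI-g}: if $\varphi$ is inversible one has $\nu_v\varphi=0$, forcing $\mu_y=\eta_y$, and the enumeration of $(\mu_x,\mu_y,\mu_z)$ then leaves only triples for which $\mu_x-\eta_x=0=\mu_z-\eta_z$ as well. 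One may therefore assume $\varphi$ is not invertible, so $\mu_y-\eta_y\ge 1$.

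Injecting the hypothesis $2\eta_x=\eta_y+3\eta_z$ into (\ref{eq:E7}) and dividing by $t^{2\eta_x}$ yields the key identity
$$\chi^2+\varphi\bigl(t^{3\eta_y-2\eta_x}\varphi^2+\psi^3\bigr)=0. \qquad (\ast)$$
Set $A:=t^{3\eta_y-2\eta_x}\varphi^2+\psi^3$. Since $3\eta_y-2\eta_x>0$ and $t\nmid\psi$, the factor $A$ is not divisible by $t$. From $(\ast)$ every irreducible $\lambda$ dividing $\varphi$ must divide $\chi$, and Lemma \ref{le:varphi-no-irrE7} then forces $\varphi$ to be reducible, with every irreducible factor appearing with multiplicity at least two.

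The final step is a case analysis. The constraints $2\eta_x=\eta_y+3\eta_z$, $2\eta_x<3\eta_y$, $\eta_y<\mu_y$, $\eta_x\le\mu_x$, $\eta_z\le\mu_z$, the admissibility bounds $\eta_x,\eta_y,\eta_z\ge 1$, and Proposition \ref{pr:musygemiE7} cut $(\mu_x,\mu_y,\mu_z)$ and $(\eta_x,\eta_y,\eta_z)$ down to a short finite list. For each triple, the inequalities $\FI(\chi)\le\mu_x-\eta_x$, $\FI(\varphi)\le\mu_y-\eta_y$, $\FI(\psi)\le\mu_z-\eta_z$ leave only a handful of possible factorizations of $\chi$, $\varphi$, $\psi$ into irreducibles over $\overline{K}[[s,t]]$. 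Substituting each factorization back into $(\ast)$ and comparing $v$-orders, one expects a contradiction in every subcase: either some irreducible factor of $\varphi$ is forced to divide a power of $t$, or two essentially distinct prime factorizations of the same element of the UFD $\overline{K}[[s,t]]$ must agree.

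The main obstacle will be the top case $(\mu_x,\mu_y,\mu_z)=(9,6,4)$, where the $\FI$-bounds are loosest and all three series can carry several irreducible factors. There I would adapt the ``Cas 3'' argument of Lemma \ref{le:1.2}, extracting suitable square and cube roots over the algebraically closed field $K$ in order to factor $(\ast)$ further, and use the assumption $t\nmid\varphi$ to derive an impossible divisibility by $t$ of some irreducible factor of $\varphi$ or $\psi$.
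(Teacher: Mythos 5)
Your overall plan (derive the identity $\chi^2+\varphi\,(t^{3\eta_y-2\eta_x}\varphi^2+\psi^3)=0$, use Lemme \ref{le:varphi-no-irrE7} to force repeated irreducible factors in $\varphi$, then run a finite case analysis on $(\mu_x,\mu_y,\mu_z)$, $(\eta_x,\eta_y,\eta_z)$ with the $\FI$-bounds of la Proposition \ref{pr:FI-g}) is the same strategy as the paper's. But there is a concrete gap in your first step: you claim that if $\varphi$ is invertible, then $\mu_y=\eta_y$ and the enumeration of la Proposition \ref{pr:musygemiE7} \emph{alone} forces $\mu_x-\eta_x=\mu_z-\eta_z=0$. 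That is false: $(\mu_x,\mu_y,\mu_z)=(9,6,4)$ with $(\eta_x,\eta_y,\eta_z)=(6,6,2)$ satisfies $\mu_y=\eta_y$, $\eta\le\mu$ componentwise, $\eta\in\ZZ^3_{>0}$, and the case conditions $2\eta_x=\eta_y+3\eta_z$, $2\eta_x<3\eta_y$, yet $\mu_x-\eta_x=3$ and $\mu_z-\eta_z=2$. Ruling out such configurations is precisely the second half of the paper's argument (after $\varphi$ is known to be invertible, it uses the relation, le Lemme \ref{le:vmu} to pin down which relation $(\mu_x,\mu_y,\mu_z)$ satisfies, and a further case-by-case contradiction); it cannot be dismissed as an immediate consequence of the list of admissible $\mu$'s.

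The second issue is that the core of the proof is asserted rather than carried out: ``one expects a contradiction in every subcase'' defers exactly the nontrivial work. In particular, in the subcase $\mu_y-\eta_y=2$ with $3\eta_y-2\eta_x=3$ (which does occur, e.g.\ $\mu=(7,5,3)$, $\eta_y=3$, $\eta_z=1$, $\eta_x=3$), the relation $\chi^2+t^{3}\varphi^3+\varphi\psi^3=0$ cannot be split into a product of two factors over $K[[s,t]]$ by the square/cube-root extractions you propose, because of the odd power of $t$; the paper's device here is the substitution $t=u^2$, passing to $K[[s,u]]$ where the splitting argument again applies. Your outline does not anticipate this obstacle, and without it (or a substitute) the factorization-and-$v$-order comparison you describe does not go through in that subcase. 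So the proposal is a plausible roadmap in the spirit of the paper, but as written it both contains an incorrect reduction and leaves the decisive case analysis, including its most delicate point, unproved.
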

\begin{proof}[Id\'ee de la d\'emonstration]  On remarque que d'apr\`es la relation (\ref{eq:E7}), on obtient la relation suivante:
\begin{equation}
\label{eq:C1E7}
 \chi^2+t^{3\eta_y-2\eta_x}\varphi^3+\varphi\psi^{3}=0.
\end{equation}

D'abord, on montre que la s\'erie $\varphi$ est inversible, ce qui \'equivaut \`a montrer que $\mu_y-\eta_y=0$.  En utilisant les relations  $2\eta_x=\eta_y+3\eta_z$ et $2\eta_x<3\eta_y$, on obtient que  $\eta_y\geq 3$.  En vertu du Lemme \ref{le:varphi-no-irrE7} et la Proposition \ref{pr:musygemiE7}, on obtient que $\mu_y-\eta_y\in \{0,2,3\}$.\\

Si on suppose que $\mu_y-\eta_y=2$, on obtient que $3\eta_y-2\eta_x\in \{2, 3\}$. D'apr\`es le lemme \ref{le:varphi-no-irrE7}, on a $\varphi=\varphi_1^2$, o\`u $\varphi_1$ est irr\'eductible.\\

Dans le cas $3\eta_y-2\eta_x=2$, on peut \'ecrire la Relation (\ref{eq:C1E7}) comme la somme de deux termes qui nous permet d'\'etablir des relations entre les facteurs  irr\'eductibles des s\'eries $\chi$, $\varphi$ et $\psi$ et  de trouver une contradiction.

 Dans le cas $3\eta_y-2\eta_x=3$, on ne peut pas \'ecrire de fa\c con \'evidente la Relation (\ref{eq:C1E7}) comme la somme de deux termes.  Cependant, on peut consid\'erer le changement de variable  $t=u^2$ et \'etudier la Relation (\ref{eq:C1E7}) dans l'anneau $K[[s,u]]$. Dans cet anneau, on peut \'ecrire cette relation comme la somme de deux termes et on traite ce cas comme le  pr\'ec\'edent.\\

Si on suppose que $\mu_y-\eta_y=3$, on obtient  $3\eta_y-2\eta_x=3$. D'apr\`es le lemme \ref{le:varphi-no-irrE7}, on a $\varphi=\varphi_1^3$, o\`u $\varphi_1$ est irr\'eductible. Dans ce cas, on peut \'ecrire la Relation (\ref{eq:C1E7}) comme la somme de deux termes et on le traite comme les cas pr\'ec\'edents.\\

 D'apr\`es la Proposition \ref{pr:musygemiE7}, le vecteur $(\mu_x,\mu_y,\mu_z)$ satisfait une des relations suivantes: 

\begin{itemize}

 \item[i)] $2\mu_x=3\mu_y$ et $2\mu_y  <  3\mu_z$;

 \item[ii)] $2\mu_x=3\mu_z$ et $3\mu_y  <  2\mu_x$;
\item[iii)]$2\mu_x=\mu_y+3\mu_z$ et $2\mu_x\leq 3\mu_y$.
\end{itemize}

En utilisant que  $\mu_y-\eta_y=0$, on obtient le lemme suivant:

\begin{lemme}
\label{le:vmu}
 Le vecteur $(\mu_x,\mu_y,\mu_z)$ ne satisfait pas les  relations suivantes:

\begin{itemize}

 \item[i)] $2\mu_x=3\mu_y$ et $2\mu_y  <  3\mu_z$;

 \item[ii)] $2\mu_x=3\mu_z$ et $3\mu_y  <  2\mu_x$.
\end{itemize}
\end{lemme}
En vertu du Lemme \ref{le:vmu}, on peut supposer que le vecteur $(\mu_x,\mu_y,\mu_z)$ satisfait la relation  suivante: $2\mu_x=\mu_y+3\mu_z$ et $2\mu_x\leq 3\mu_y$.

Maintenant, en raisonnant par l'absurde, on suppose que la s\'erie $\chi$ ou la s\'erie $\psi$ n'est pas inversible. On peut donc \'etablir une liste de cas possibles pour les vecteurs $(\mu_x,\mu_y,\mu_z)$ et $(\eta_x,\eta_y,\eta_z)$. En utilisant la Relation (\ref{eq:C1E7}), la Proposition \ref{pr:musygemiE7} et les majorants du nombre de facteurs irr\'eductibles compt\'es avec multiplicit\'e des s\'eries formelles $\chi$ et $\psi$, on obtient une contradiction dans chaque cas de la liste, d'o\`u la proposition.     
 \end{proof}
La preuve des propositions suivantes est  analogue  \`a celle de la proposition \ref{pr:cas1etaE7}.
\begin{proposition}
\label{pr:cas2etaE7}
   On suppose que $2\eta_x=3\eta_y$ et $2\eta_y<3\eta_z$. Alors, les s\'eries  $\chi$, $\varphi$, $\psi$ sont inversibles.
 \end{proposition}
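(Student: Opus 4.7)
On reprend la strat\'egie de la Proposition \ref{pr:cas1etaE7}. En divisant la Relation (\ref{eq:E7}) par $t^{2\eta_x}=t^{3\eta_y}$, on obtient la relation simplifi\'ee
\[
\chi^2+\varphi^3+t^{m}\varphi\psi^3=0,\qquad m:=3\eta_z-2\eta_y>0.
\]
L'\'egalit\'e $2\eta_x=3\eta_y$ force $2\mid\eta_y$ et $3\mid\eta_x$; jointe \`a $2\eta_y<3\eta_z$ et aux majorations $\eta_x\leq\mu_x$, $\eta_y\leq\mu_y$, $\eta_z\leq\mu_z$ issues des Propositions \ref{pr:FI-g} et \ref{pr:musygemiE7}, cela ne laisse qu'un petit nombre de triplets $(\eta_x,\eta_y,\eta_z)$ et $(\mu_x,\mu_y,\mu_z)$ \`a examiner.

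La premi\`ere \'etape consiste \`a montrer que $\varphi$ est inversible. On raisonne par l'absurde: $\mu_y-\eta_y\geq 1$. Le Lemme \ref{le:varphi-no-irrE7} interdit \`a $\varphi$ d'\^etre irr\'eductible, et la borne $\FI(\varphi)\leq \mu_y-\eta_y\leq 4$ combin\'ee au m\^eme lemme entra\^ine $\varphi=\varphi_1^{k}$ avec $\varphi_1$ irr\'eductible, $t\nmid\varphi_1$ et $k\in\{2,3,4\}$. En rempla\c cant dans la relation ci-dessus, on obtient
\[
\chi^2=-\varphi_1^{k}\bigl(\varphi_1^{2k}+t^{m}\psi^3\bigr),
\]
d'o\`u $\varphi_1\mid\chi$; en \'ecrivant $\chi=\varphi_1^{\lceil k/2\rceil}\chi_1$ et en comparant les $v$-ordres, on ram\`ene la situation \`a une \'egalit\'e du type carr\'e \'egal \`a un cube plus un mon\^ome. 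Comme dans la Proposition \ref{pr:cas1etaE7}, suivant la parit\'e et la classe modulo $3$ de $m$, on applique \'eventuellement le changement de variable ramifi\'e $t=u^n$ (avec $n$ choisi pour rendre tous les exposants divisibles \`a la fois par $2$ et par $3$), puis on factorise dans $K[[s,u]]$ pour conclure par unicit\'e de la factorisation que $\varphi_1$ ou $\psi$ doit \^etre divisible par $t$ ou par une unit\'e, d'o\`u une contradiction.

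Une fois \'etablie l'inversibilit\'e de $\varphi$, on d\'emontre un analogue du Lemme \ref{le:vmu} qui \'elimine les triplets $(\mu_x,\mu_y,\mu_z)$ de la Proposition \ref{pr:musygemiE7} incompatibles avec $\mu_y=\eta_y$ et $2\eta_y<3\eta_z$; il ne subsiste que quelques configurations. On raisonne \`a nouveau par l'absurde en supposant que $\chi$ ou $\psi$ n'est pas inversible: la relation devient $\chi^2=-\varphi(\varphi^2+t^{m}\psi^3)$ avec $\varphi$ unit\'e, et dans chacune des configurations restantes, les bornes $\FI(\chi)\leq\mu_x-\eta_x\leq 6$ et $\FI(\psi)\leq\mu_z-\eta_z\leq 3$ permettent, via une factorisation explicite du second membre (quitte \`a repasser au changement de variable $t=u^n$), d'obtenir une contradiction.

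L'obstacle principal est le traitement des sous-cas o\`u l'exposant $m$ est premier \`a $6$: l'expression $\varphi_1^{2k}+t^{m}\psi^3$ ne se factorise pas de mani\`ere naturelle comme produit d'un carr\'e et d'un cube dans $K[[s,t]]$. Il faut alors proc\'eder comme dans le cas analogue d\'elicat de la preuve de la Proposition \ref{pr:cas1etaE7}, \`a savoir passer \`a l'anneau $K[[s,u]]$ par $t=u^{n}$ avec $n$ choisi de fa\c con \`a rendre $m n$ simultan\'ement multiple de $2$ et de $3$, puis exploiter l'unicit\'e de la factorisation pour for\c cer une divisibilit\'e par $t$ contraire \`a l'hypoth\`ese $t\nmid\varphi$, $t\nmid\psi$.
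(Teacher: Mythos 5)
Your plan follows exactly the strategy the paper intends for this statement: the paper gives no separate argument for Proposition \ref{pr:cas2etaE7} and only says it is analogous to Proposition \ref{pr:cas1etaE7}, and your reduction of the Relation (\ref{eq:E7}) to $\chi^2+\varphi^3+t^{m}\varphi\psi^3=0$ with $m=3\eta_z-2\eta_y>0$, the parity constraints $2\mid\eta_y$, $3\mid\eta_x$, the two-stage scheme (first $\varphi$ invertible, then an analogue of the Lemme \ref{le:vmu} and the elimination of the remaining configurations), and the ramified substitution $t=u^{n}$ for the recalcitrant exponents are all faithful to that template.

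There is however a concrete gap in your first step. Lemme \ref{le:varphi-no-irrE7} only yields that every irreducible factor of $\varphi$ occurs with multiplicity at least $2$; combined with $\FI(\varphi)\leq \mu_y-\eta_y\leq 4$ this leaves, besides $\varphi=\varphi_1^{k}$ with $k\in\{2,3,4\}$ (up to unit), the possibility $\varphi=\varphi_1^{2}\varphi_2^{2}$ with $\varphi_1,\varphi_2$ irreducible and non associ\'ees, which your sketch silently discards. This case is not vacuous under your hypotheses: $\mu_y-\eta_y=4$ is attainable here, namely $(\mu_x,\mu_y,\mu_z)=(9,6,4)$ with $(\eta_x,\eta_y)=(3,2)$ and $\eta_z\geq 2$, whereas in Proposition \ref{pr:cas1etaE7} the paper's bound $\eta_y\geq 3$ gave $\mu_y-\eta_y\in\{0,2,3\}$ and made the reduction to a pure power legitimate. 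The analogous situation in the $\EE_6$ proof (the case $\varphi=\varphi_1\varphi_2$ non associ\'ees in Lemme \ref{le:1.2}) requires its own factorization argument, so this case must be treated, not omitted. Beyond that, your second and third paragraphs announce rather than derive the contradictions: "on obtient une contradiction dans chaque cas" and the choice of $n$ in $t=u^{n}$ are statements of intent, and minor points (for $k=3$ the relation $\chi^2=-\varphi_1^{3}(\varphi_1^{6}+t^{m}\psi^3)$ forces $\varphi_1\mid\psi$, which changes the exact $\varphi_1$-order of $\chi$) need to be made explicit. As a roadmap your proposal matches the paper; as a proof it is incomplete until the missing factorization case and the final case-by-case contradictions are actually carried out.
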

\begin{proposition}
\label{pr:cas3etaE7}
   On suppose que $2\eta_y=3\eta_z$ et $3\eta_y<2\eta_x$. Alors, les s\'eries  $\chi$, $\varphi$, $\psi$ sont inversibles.
 \end{proposition}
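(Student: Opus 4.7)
Le plan est d'adapter la strat\'egie des Propositions \ref{pr:cas1etaE7} et \ref{pr:cas2etaE7}.

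Premi\`ere \'etape : simplifier la Relation (\ref{eq:E7}). L'hypoth\`ese $2\eta_y=3\eta_z$ donne $\eta_y+3\eta_z=3\eta_y$ ; avec $3\eta_y<2\eta_x$, une division par $t^{3\eta_y}$ fournit l'identit\'e cl\'e
\[
\varphi(\varphi^2+\psi^3)=-t^{2\eta_x-3\eta_y}\chi^2.
\]
Les contraintes de divisibilit\'e $3\mid\eta_y$ et $2\mid\eta_z$, jointes aux majorations $\eta_y\leq\mu_y\leq 6$ et $\eta_z\leq\mu_z\leq 4$ issues de la Proposition \ref{pr:musygemiE7}, r\'eduisent $(\eta_y,\eta_z)$ \`a $\{(3,2),(6,4)\}$ ; la condition $\eta_x>3\eta_y/2$ combin\'ee \`a $\eta_x\leq\mu_x\leq 9$ \'elimine le cas $(6,4)$, de sorte que l'on aura $(\eta_y,\eta_z)=(3,2)$ et $\eta_x\in\{5,6,7,8,9\}$.

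Deuxi\`eme \'etape : on prouverait que $\varphi$ est inversible, c'est-\`a-dire $\mu_y-\eta_y=0$. Le Lemme \ref{le:varphi-no-irrE7} impose que chaque facteur irr\'eductible de $\varphi$ appara\^isse avec multiplicit\'e au moins $2$, donc $\mu_y-\eta_y\in\{0,2,3,4\}$. Pour chaque valeur non nulle, on combinerait la forme possible de $\varphi$ (parmi $\varphi_1^2$, $\varphi_1^3$, $\varphi_1^4$ ou $\varphi_1^2\varphi_2^2$) avec l'identit\'e cl\'e ci-dessus, en contr\^olant les $v$-ordres des deux membres et les factorisations de $\psi$ et $\chi$ (avec $\FI(\psi)\leq\mu_z-\eta_z\leq 3$ et $\FI(\chi)\leq\mu_x-\eta_x\leq 6$). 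Un examen cas par cas, pr\'ec\'ed\'e si n\'ecessaire du changement de variable $t=u^2$ destin\'e \`a extraire des racines carr\'ees (indispensable d\`es que $\eta_y=3$ rend $2\eta_x-3\eta_y$ impair), fournirait la contradiction voulue, comme dans l'\'etape analogue de la preuve de la Proposition \ref{pr:cas1etaE7}.

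Troisi\`eme \'etape : $\varphi$ \'etant inversible, on \'etablirait l'analogue du Lemme \ref{le:vmu} pour restreindre $(\mu_x,\mu_y,\mu_z)$ \`a la configuration $2\mu_y=3\mu_z$ et $3\mu_y<2\mu_x$ parmi les trois types donn\'es par la Proposition \ref{pr:musygemiE7}. On raisonnerait alors par l'absurde en supposant que $\chi$ ou $\psi$ n'est pas inversible : apr\`es avoir r\'e\'ecrit l'identit\'e sous la forme $\varphi^2+\psi^3=-t^{2\eta_x-3\eta_y}\chi^2\varphi^{-1}$, une analyse cas par cas sur la liste finie des couples $(\mu_x,\mu_y,\mu_z)$ et $(\eta_x,\eta_y,\eta_z)$ r\'esiduels devrait conclure. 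L'obstacle principal sera, comme dans les propositions pr\'ec\'edentes, le traitement des sous-cas o\`u $2\eta_x-3\eta_y$ est impair et o\`u il faut passer \`a $K[[s,u]]$ par le changement $t=u^2$ pour extraire des racines carr\'ees ; maintenir la coh\'erence des factorisations de $\chi$, $\varphi$ et $\psi$ apr\`es ce changement de variable constituera le point le plus d\'elicat du raisonnement.
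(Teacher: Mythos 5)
Your sketch takes essentially the same route as the paper, whose own proof of this proposition consists precisely in declaring it analogous to Proposition \ref{pr:cas1etaE7}: the same simplification of the Relation (\ref{eq:E7}) to $\varphi(\varphi^2+\psi^3)=-t^{2\eta_x-3\eta_y}\chi^2$, the same reduction to $(\eta_y,\eta_z)=(3,2)$ with $\eta_x\geq 5$, the same use of Lemma \ref{le:varphi-no-irrE7} and of the $v$-order bounds to eliminate $\mu_y-\eta_y\neq 0$ (note that with $\eta_y=3$ and $\mu_y\leq 6$ the value $4$ and the shape $\varphi_1^2\varphi_2^2$ you list are already excluded), and the same $t=u^2$ device for the odd exponent $2\eta_x-9$. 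The plan is sound; in fact your third step is simpler than you anticipate, since $\varphi$ invertible forces $(\mu_x,\mu_y,\mu_z)=(5,3,2)$ by Proposition \ref{pr:musygemiE7} (the only other vector with $\mu_y=3$ has $\mu_z=1<\eta_z$), hence $\eta_x=5$ and $\eta_z=\mu_z$, so $\chi$ and $\psi$ are invertible at once by Proposition \ref{pr:FI-g}.
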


La d\'emonstration du cas $\EE_7$ du Th\'eor\`eme \ref{th:nashDP} r\'esulte  des propositions \ref{pr:etaE7}, \ref{pr:cas1etaE7}, \ref{pr:cas2etaE7} et \ref{pr:cas3etaE7}.
\section[Les singularit\'es de type $\DD_n$]{Une nouvelle preuve de la bijectivit\'e de l'application de Nash pour les singularit\'es de type $\DD_n$}

Dans l'article \cite{Ple08} l'auteur  d\'emontre la bijectivit\'e de l'application de Nash $\mathcal{N}_{\DD_n}$ associ\'ee \`a une singularit\'e de type $\DD_n$, $n\geq 4$. Dans cette section, en utilisant les m\^emes m\'ethodes que dans les preuves des Th\'eor\`emes \ref{th:nashBP} et \ref{th:nashDP}, on donne  une d\'emonstration de la bijectivit\'e de l'application $\mathcal{N}_{\DD_n}$  diff\'erente de celle de \cite{Ple08}.\\

 L'entier $n\geq 4$ \'etant fix\'e, soit $S$ l'hypersurface normale de $\AF_{\KK}^3$ donn\'ee par l'\'equation  $x^2+z(y^2+z^{n-2})=0$. L'hypersurface $S$ a un unique point singulier de type $\DD_n$.\\

Comme dans la preuve des Th\'eor\`emes \ref{th:nashBP} ou \ref{th:nashDP} pour prouver la bijectivit\'e de l'application de Nash associ\'ee \`a $S$, on a besoin de quelques r\'esultats sur la r\'esolution minimale de $S$.\\

On consid\`ere l'\'eventail de Newton  $\EN{\f}$ associ\'e \`a $\f:=x^2+z(y^2+z^{n-2})$. Soit $H$ un plan de $\RR^3$ qui ne contient pas l'origine de $\RR^3$ et tel que l'intersection de $H$ et $\RR^3_{\geq 0}$ soit un ensemble compact. La Figure \ref{fi:ENDn}  repr\'esente l'intersection de $H$ avec la subdivision $\EN{\f}$ de $\RR^3_{\geq 0}$. Chaque sommet du diagramme est identifi\'e avec le {\it vecteur extr\'emal}  correspondant.  On note $\tau_1$ (resp. $\tau_2$, $\tau_3$)  le c\^one engendr\'e par les vecteurs $(1,0,0)$ (resp. $(0,1,0)$, $(0,0,1)$) et $(n-1,n-2,2)$.\\

\begin{figure}[!h]
\begin{tikzpicture}[font=\small]
   \draw[thick] (0,0)  -- (330:3) (0,0)  -- (210:3) (0,0) -- (-0.6495,1.8750);
   \draw[thick] (330:3) --  (90:3)  -- (210:3) -- (330:3);
\draw (-1.2,1.8750) node[inner sep=-1pt,below=-1pt,rectangle,fill=white] {{\tiny $(1,0,2)$}};
\draw (0.85,0.2) node[inner sep=-1pt,below=-1pt,rectangle,fill=white] {{\tiny (n-1,n-2,2)}};
\draw (335:3.5) node[inner sep=-1pt,below=-1pt,rectangle,fill=white] {{\tiny $(0,1,0)$}};
\draw (340:1.5) node[inner sep=-1pt,below=-1pt,rectangle,fill=white] {$\tau_2$};
\draw (205:3.5) node[inner sep=-1pt,below=-1pt,rectangle,fill=white] {{\tiny $(1,0,0)$}};
\draw (200:1.5) node[inner sep=-1pt,below=-1pt,rectangle,fill=white] {$\tau_1$};
\draw (90:3.3) node[inner sep=-1pt,below=-1pt,rectangle,fill=white] {{\tiny $(0,0,1)$}};
\draw (100:1.5) node[inner sep=-1pt,below=-1pt,rectangle,fill=white] {$\tau_3$};
    \draw[fill=black]  (0,0) circle(0.7mm) (330:3) circle(0.7mm)  (90:3) circle(0.7mm)  (210:3) circle(0.7mm) (90:3) circle(0.7mm) 
(-0.6495,1.8750) circle(0.7mm);

  \end{tikzpicture}

  \caption{\'Eventail de Newton $\EN{f}$}
\label{fi:ENDn}
\end{figure}

Pour un c\^one  $\tau$ dans $\EN{\f}$, on note $G\tau$ le syst\`eme g\'en\'erateur minimal du semi-groupe $\tau\cap \ZZ^3$. Par un calcul direct, on obtient le r\'esultat suivant:
\begin{proposition} 
\label{pr:GtauDn}
Le syst\`eme g\'en\'erateur minimal du semi-groupe $\tau_3\cap \ZZ^3$ est l'ensemble 
 $G\tau_3=\{(j,j-1,2)\mid j\in \{1,2,...,n-1\}\}$.
De plus, soit $k\geq 2$ un entier,  on a:

\begin{itemize} 
 \item[i)] Si $n=2k$, alors le syst\`eme g\'en\'erateur minimal du semi-groupe $\tau_1\cap \ZZ^3$ est l'ensemble $G\tau_1=\{(1,0,0),(k,k-1,1),(n-1,n-2,2)\}$ et $\tau_2$ est un c\^one r\'egulier. 
\item[ii)] Si $n=2k-1$,  alors le syst\`eme g\'en\'erateur minimal du semi-groupe $\tau_2\cap \ZZ^3$ est l'ensemble  $G\tau_2=\{(0,1,0),(k-1,k-1,1),(n-1,n-2,2)\}$ et $\tau_1$ est un c\^one r\'egulier. 
\end{itemize}
\end{proposition}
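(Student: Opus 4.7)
La preuve se ram\`ene, pour chaque $i\in\{1,2,3\}$, \`a un calcul explicite de base de Hilbert du semi-groupe de rang deux form\'e des points entiers du c\^one $\tau_i$. Je proc\'ederais comme suit : (i) identifier l'\'equation lin\'eaire d\'efinissant le plan vectoriel $\mathrm{Vect}_{\RR}(\tau_i)$ ; (ii) en d\'eduire une $\ZZ$-base du sous-r\'eseau $L_i:=\ZZ^3\cap\mathrm{Vect}_{\RR}(\tau_i)$, ce qui demande de contr\^oler un pgcd qui d\'ependra de la parit\'e de $n$ dans les cas $\tau_1$ et $\tau_2$ ; (iii) \'ecrire $\tau_i$ dans cette base comme un c\^one de $\ZZ^2$ engendr\'e par deux vecteurs primitifs ; (iv) lire la base de Hilbert via l'algorithme classique pour les c\^ones rationnels du plan (les seuls g\'en\'erateurs autres que les rayons extr\'emaux sont les points entiers qui ne se d\'ecomposent pas en somme non triviale dans le c\^one).

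Pour $\tau_3$, le plan vectoriel est d\'efini par $z=2(x-y)$ et $\{(1,0,2),(1,1,0)\}$ en est une $\ZZ$-base. Dans les coordonn\'ees $(r,s)$ associ\'ees, un point entier $(a,b,2(a-b))$ est rep\'er\'e par $(r,s)=(a-b,b)$, et appartient \`a $\tau_3$ si et seulement si $r\geq 0$ et $0\leq s\leq(n-2)r$. Autrement dit, $\tau_3$ s'identifie au c\^one engendr\'e par $(1,0)$ et $(1,n-2)$ ; sa base de Hilbert est alors imm\'ediate~: c'est $\{(1,k)\mid k=0,\ldots,n-2\}$, puisque tout $(r,s)$ du c\^one avec $r\geq 2$ se d\'ecompose en $(1,k)+(r-1,s-k)$ avec $k$ bien choisi, les deux termes restant dans le c\^one. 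De retour dans $\ZZ^3$, on obtient $(1+k,k,2)$, c'est-\`a-dire $(j,j-1,2)$ pour $j=1,\ldots,n-1$.

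Pour $\tau_1$ et $\tau_2$, la parit\'e de $n$ pilote l'arithm\'etique. Le plan de $\tau_1$ est $(n-2)z=2y$. Si $n$ est impair, $\gcd(n-2,2)=1$, donc $L_1$ admet pour base $\{(1,0,0),(0,n-2,2)\}$ et $\tau_1$ devient le c\^one r\'egulier engendr\'e par $(1,0)$ et $(n-1,1)$ (d\'eterminant $1$). Si $n=2k$ est pair, $L_1$ a pour base $\{(1,0,0),(0,k-1,1)\}$ et $\tau_1$ s'identifie au c\^one engendr\'e par $(1,0)$ et $(n-1,2)$, de multiplicit\'e $2$ ; sa base de Hilbert contient alors un g\'en\'erateur interm\'ediaire $(k,1)$ (correspondant \`a $\lceil(n-1)/2\rceil=k$), d'o\`u $G\tau_1=\{(1,0,0),(k,k-1,1),(n-1,n-2,2)\}$. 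L'\'etude sym\'etrique de $\tau_2$ (plan $2x=(n-1)z$) livre la r\'egularit\'e lorsque $n$ est pair (car $\gcd(2,n-1)=1$) et les trois g\'en\'erateurs $\{(0,1,0),(k-1,k-1,1),(n-1,n-2,2)\}$ lorsque $n=2k-1$ est impair, par le m\^eme raisonnement appliqu\'e \`a la base $\{(k-1,0,1),(0,1,0)\}$ de $L_2$.

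Il n'y a pas de v\'eritable difficult\'e conceptuelle : l'unique point \`a soigner est la gestion des deux parit\'es et le choix judicieux des $\ZZ$-bases garantissant que les rayons extr\'emaux de chaque $\tau_i$ s'expriment simplement dans les coordonn\'ees du r\'eseau.
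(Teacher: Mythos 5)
Votre proposition est correcte et correspond exactement au ``calcul direct'' que l'article se contente d'invoquer : r\'eduction de chaque $\tau_i$ \`a un c\^one du r\'eseau de rang deux $\mathrm{Vect}_{\RR}(\tau_i)\cap\ZZ^3$, puis d\'etermination de la base de Hilbert, avec la bonne discussion de parit\'e ; toutes vos bases, d\'eterminants et g\'en\'erateurs interm\'ediaires ($(k,1)$ pour $\tau_1$ si $n=2k$, $(k-1,1)$ pour $\tau_2$ si $n=2k-1$, les $(1,k)$ pour $\tau_3$) sont exacts. Signalez toutefois explicitement que votre plan $z=2(x-y)$ revient \`a lire $\tau_3$ comme le c\^one engendr\'e par $(1,0,2)$ et $(n-1,n-2,2)$ (conform\'ement \`a la Figure \ref{fi:ENDn} et \`a l'usage ult\'erieur dans la preuve du Th\'eor\`eme \ref{nashDn}), et non par $(0,0,1)$ comme l'indique la d\'efinition textuelle qui pr\'ec\`ede : c'est la seule lecture sous laquelle l'\'enonc\'e sur $G\tau_3$ est vrai, le c\^one engendr\'e par $(0,0,1)$ et $(n-1,n-2,2)$ \'etant r\'egulier.
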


 Soit  $\GD{\f}$ une $G$-{\it subdivision r\'eguli\`ere}  de  $\EN{\f}$. On note  $\pi_{\mathcal{N}}:X(\EN{\f})\rightarrow \AF^{3}_{\KK}$  (resp. $\pi_{\mathcal{G}}:X(\GD{\f})\rightarrow X(\EN{\f})$)  le morphisme torique induit par la subdivision $\EN{\f}$ de $\RR^3_{\geq 0}$ (resp. $\GD{\f}$ de  $\EN{\f}$) et $S_{\mathcal{G}}$ le transform\'e strict de $S$ associ\'e au morphisme $\pi:=\pi_{\mathcal{G}}\circ \pi_{\mathcal{N}}$.\\

La proposition suivante est un analogue de la Proposition \ref{pr:Gdes1}.

\begin{proposition}
 \label{pr:desingE6} $S_{\mathcal{G}}$ est la r\'esolution minimale de $S$.
\end{proposition}

Soient $\E$ un diviseur essentiel sur $S$ ($\E\in  \Ess(S))$  et  $\alpha_{\E}$ le point g\'en\'erique de $N_{\E}$. On note 
\begin{center} $(\mu_x,\mu_y,\mu_z):=(\ordt{\com{\alpha_{\E}}}(x),\ordt{\com{\alpha_{\E}}}(y), \ordt{\com{\alpha_{\E}}}(z))$,\end{center}
o\`u $\com{\alpha_{\E}}$ est le comorphisme de $\alpha_{\E}$.\\

La proposition suivante est un analogue du Corollaire \ref{co:musygemi}.
\begin{proposition}
\label{pr:musygemiDn}
Soit $k$  un entier, $k\geq 2$.
\begin{itemize}
 \item[]Si $n=2k$, alors le vecteur $(\mu_x,\mu_y,\mu_z)$ appartient \`a l'union de  $G\tau_1$ et $G\tau_3$.
\item[] Si $n=2k-1$, alors le vecteur $(\mu_x,\mu_y,\mu_z)$ appartient \`a l'union de  $G\tau_2$ et $G\tau_3$.
\end{itemize}
\end{proposition}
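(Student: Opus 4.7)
Le plan est de reproduire la démarche de la preuve du Corollaire \ref{co:musygemi}. On commence par observer que $\f=x^2+z(y^2+z^{n-2})$ est irréductible et non-dégénéré par rapport à sa frontière de Newton (ce qui se vérifie directement sur chaque face compacte de $\Gamma(\f)$), et que $O$ est son unique point singulier. L'analogue de la Proposition \ref{pr:musygemi-g} donne alors que $(\mu_x,\mu_y,\mu_z)\in E\GD{\f}\cap S_2\EN{\f}$: on relève $\alpha_{\E}$ en $\widehat{\alpha}_{\E}$ sur $\SD$ et on exploite la transversalité de $\widehat{\alpha}_{\E}$ au diviseur torique qui contient $\E$.

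Comme $\E$ est essentiel et donc exceptionnel, son point générique se trouve dans la fibre au-dessus de $O$, ce qui impose $\mu_x,\mu_y,\mu_z>0$. Le $2$-squelette se décomposant selon $S_2\EN{\f}=\tau_1\cup\tau_2\cup\tau_3\cup S_2\RR^{3}_{\geq 0}$ (voir la Figure \ref{fi:ENDn}), la condition d'appartenance à $\ZZ^{3}_{>0}$ exclut la partie du bord $S_2\RR^{3}_{\geq 0}$, d'où $(\mu_x,\mu_y,\mu_z)\in \tau_1\cup \tau_2\cup \tau_3$.

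On conclut grâce à la Proposition \ref{pr:GtauDn}. Supposons $n=2k$: $\tau_2$ étant régulier, $G\tau_2$ se réduit à ses deux vecteurs extrémaux $(0,1,0)$ et $(n-1,n-2,2)$, et une $G$-subdivision régulière n'introduit aucun rayon supplémentaire dans $\tau_2$. Si $(\mu_x,\mu_y,\mu_z)\in \tau_2$, la positivité stricte des coordonnées exclut $(0,1,0)$ et force $(\mu_x,\mu_y,\mu_z)=(n-1,n-2,2)\in G\tau_1\cap G\tau_3$. Si $(\mu_x,\mu_y,\mu_z)\in \tau_1$ (resp. $\tau_3$), ce vecteur étant extrémal dans la $G$-subdivision régulière de ce cône appartient par définition à $G\tau_1$ (resp. $G\tau_3$). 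Le cas $n=2k-1$ se traite symétriquement en échangeant les rôles de $\tau_1$ et $\tau_2$.

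Le point potentiellement délicat est la justification de l'étape initiale: la Proposition \ref{pr:cr_nor-g}, que l'on souhaite invoquer pour disposer d'une résolution plongée, exige que $S$ ne contienne pas de $T$-orbite de dimension $1$, or ici $S$ contient l'axe $\{x=z=0\}$. On contourne cet obstacle en travaillant directement avec la résolution minimale $\pi:\SD\rightarrow S$ fournie par la Proposition \ref{pr:desingE6}, en établissant à la main, dans les cartes affines toriques de $\GD{\f}$, la transversalité de $\widehat{\alpha}_{\E}$ au diviseur exceptionnel qui contient son centre.
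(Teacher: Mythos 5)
Votre démarche est exactement celle que l'article sous-entend pour cette proposition (il n'en donne pas de preuve et la présente comme analogue du Corollaire \ref{co:musygemi}) : analogue de la Proposition \ref{pr:musygemi-g} pour placer $(\mu_x,\mu_y,\mu_z)$ dans $E\GD{\f}\cap S_2\EN{\f}$, exclusion du bord par positivité stricte, puis identification des générateurs via la Proposition \ref{pr:GtauDn}. Votre mise en garde sur l'hypothèse « pas de $T$-orbite de dimension $1$ » est pertinente (ici $S\supset\{x=z=0\}$ et l'article passe ce point sous silence), mais la vérification « à la main » de la transversalité dans les cartes de $\GD{\f}$ que vous annoncez n'est pas effectuée ; sur ce point votre texte reste une esquisse, au même niveau de détail que l'article.

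Un point doit toutefois être rectifié. La décomposition que vous utilisez, $S_2\EN{\f}=\tau_1\cup\tau_2\cup\tau_3\cup S_2\RR^{3}_{\geq 0}$, est fausse si l'on prend $\tau_3$ tel que le texte le définit, c'est-à-dire engendré par $(0,0,1)$ et $(n-1,n-2,2)$ : les trois arêtes compactes de $\Gamma(\f)$, de sommets $(2,0,0)$, $(0,2,1)$, $(0,0,n-1)$, ont pour cônes duaux $\tau_1$, $\tau_2$ et le cône engendré par $(1,0,2)$ et $(n-1,n-2,2)$ (c'est le troisième segment de la Figure \ref{fi:ENDn}), tandis que le cône engendré par $(0,0,1)$ et $(n-1,n-2,2)$ n'est pas un cône de $\EN{\f}$ et est d'ailleurs régulier, de sorte que son $G\tau_3$ se réduirait à ses deux extrémaux, en contradiction avec la liste $\{(j,j-1,2)\mid 1\leq j\leq n-1\}$ de la Proposition \ref{pr:GtauDn}. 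Avec la définition littérale, votre étape « $(\mu_x,\mu_y,\mu_z)\in\tau_1\cup\tau_2\cup\tau_3$ » exclurait à tort les diviseurs essentiels pour lesquels $\mu=(j,j-1,2)$ avec $1<j<n-1$. Il s'agit visiblement d'une coquille de l'article (le reste de la section, par exemple le cas $(\eta_x,\eta_y,\eta_z)\in\tau_3^0$ donnant $\eta_z=2$, $\eta_x=\eta_y+1$, confirme que $\tau_3$ doit être lu comme engendré par $(1,0,2)$ et $(n-1,n-2,2)$) ; votre argument est correct à condition d'expliciter cette lecture, ce que votre rédaction ne fait pas.
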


D\'emontrer que l'application de Nash bijective pour les singularit\'es de type $\DD_n$, $n\geq 4$, \'equivaut \`a montrer que tous les wedges admissibles se rel\`event \`a la r\'esolution minimale de $\DD_n$ (voir \cite{Reg06}). Dans la suite, on  montre que pour chaque diviseur essentiel $\E$ tous les $K$-wedges admissibles centr\'es en $N_{\E}$  se rel\`event \`a la r\'esolution minimale de $S$.\\

Soit $\E\in \Ess(S)$ et on consid\`ere un $K$-wedge $\omega:\spec K[[s,t]]\rightarrow S$ admissible centr\'e en $N_{\E}$. On pose: \begin{center} $(\eta_x,\eta_y,\eta_z): =(\ordt \omega^{\star }(x),\ordt \omega^{\star }(y),\ordt \omega^{\star }(z))$. \end{center}

  On peut \'ecrire le comorphisme  de  $\omega$  de la fa\c con  suivante:\\
 
\begin{center} 
$\omega^{\star }(x)=t^{\eta_x}\chi,\;\omega^{\star }(y)=t^{\eta_y}\varphi,\; \omega^{\star }(z)=t^{\eta_z}\psi,$
\end{center}
o\`u les s\'eries formelles $\chi$, $\varphi$, $\psi$ ne sont pas divisibles par $t$.\\

La  proposition  suivante est un analogue du Corollaire \ref{co:etacone}. On note $\tau^0$ l'int\'erieur du c\^one $\tau\in \EN{\f}$.

\begin{proposition}
\label{pr:etaconeDn}
Soit $k$  un entier, $k\geq 2$.
 S'il existe au moins  une s\'erie formelle parmi les s\'eries  $\chi$, $\varphi$, $\psi$ qui n'est pas  inversible, alors on a:

 \begin{itemize}
\item[i)]si $n=2k$, alors le vecteur $(\eta_x,\eta_y,\eta_z)$ appartient \`a l'union de $\tau_1^{0}$ et $\tau_3^{0}$;
\item[ii)]si $n=2k-1$, alors le vecteur $(\eta_x,\eta_y,\eta_z)$ appartient \`a l'union de $\tau_2^{0}$ et $\tau_3^{0}$.
  \end{itemize}

\end{proposition}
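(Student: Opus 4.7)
L'id\'ee est d'adapter la d\'emonstration de la Proposition \ref{pr:etaconeE6} au cas $\DD_n$, en exploitant la Proposition \ref{pr:GtauDn} pour g\'erer l'asym\'etrie impos\'ee par la parit\'e de $n$. Je commencerais par appliquer la Proposition \ref{pr:etacone-g}, qui fournit $(\eta_x,\eta_y,\eta_z)\in S_2\EN{\f}\cap\ZZ^3_{>0}$. La d\'ecomposition $S_2\EN{\f}=\tau_1\cup \tau_2\cup \tau_3\cup S_2\RR^3_{\geq 0}$, combin\'ee \`a la positivit\'e stricte des coordonn\'ees, exclut le $2$-squelette du bord. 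Les seules ar\^etes des c\^ones $\tau_i$ intersectant $\ZZ^3_{>0}$ \'etant les multiples entiers de $(n-1,n-2,2)$ (ar\^ete commune aux trois $\tau_i$), le vecteur $(\eta_x,\eta_y,\eta_z)$ appartient \`a $\tau_1^0\cup \tau_2^0\cup \tau_3^0\cup \ZZ_{>0}(n-1,n-2,2)$.

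Il faudrait ensuite exclure le rayon $\ZZ_{>0}(n-1,n-2,2)$. Si $(\eta_x,\eta_y,\eta_z)=m(n-1,n-2,2)$ pour un entier $m\geq 1$, la Proposition \ref{pr:musygemiDn} fournit la borne $\mu_x\leq n-1$ (en inspectant la premi\`ere coordonn\'ee des vecteurs de $G\tau_1\cup G\tau_3$ pour $n=2k$, resp. de $G\tau_2\cup G\tau_3$ pour $n=2k-1$). L'in\'egalit\'e $m(n-1)=\eta_x\leq \mu_x\leq n-1$ force alors $m=1$, et donc $(\mu_x,\mu_y,\mu_z)=(\eta_x,\eta_y,\eta_z)=(n-1,n-2,2)$; par la Proposition \ref{pr:FI-g}, les s\'eries $\chi,\varphi,\psi$ seraient simultan\'ement inversibles, contredisant l'hypoth\`ese.

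Le c\oe ur de la preuve consistera enfin \`a exclure un $\tau_i^0$ selon la parit\'e de $n$, en exploitant la r\'egularit\'e du c\^one concern\'e fournie par la Proposition \ref{pr:GtauDn}. Pour $n=2k$, je d\'emontrerais que $\tau_2^0$ est \`a \'ecarter: tout $(\eta_x,\eta_y,\eta_z)\in \tau_2^0\cap\ZZ^3$ s'\'ecrit de mani\`ere unique $a(0,1,0)+b(n-1,n-2,2)$ avec $a,b\geq 1$, d'o\`u $\eta_x=b(n-1)\geq n-1$; cela force $b=1$, $\mu_x=n-1$, donc $(\mu_x,\mu_y,\mu_z)=(n-1,n-2,2)$, puis $\eta_y=a+(n-2)>n-2=\mu_y$, en contradiction avec $\eta_y\leq \mu_y$. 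Pour $n=2k-1$, le c\^one $\tau_1$ est r\'egulier, engendr\'e par $(1,0,0)$ et $(n-1,n-2,2)$, et un argument compl\`etement parall\`ele portant cette fois sur la deuxi\`eme coordonn\'ee (via la borne $\mu_y\leq n-2$) exclut $\tau_1^0$.

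L'obstacle principal n'est pas technique mais combinatoire: il faut identifier correctement, selon la parit\'e de $n$, quel c\^one doit \^etre exclu et v\'erifier que la r\'egularit\'e fournie par la Proposition \ref{pr:GtauDn} concerne pr\'ecis\'ement ce c\^one-l\`a. Une fois cette identification faite, les contradictions d\'ecoulent imm\'ediatement des in\'egalit\'es $\eta_*\leq \mu_*$ combin\'ees aux bornes fournies par la Proposition \ref{pr:musygemiDn}, sans nouveau calcul substantiel.
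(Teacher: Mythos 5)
Your proof is correct and is precisely the argument the paper leaves implicit for $\DD_n$ (the "analogue" of Corollaire \ref{co:etacone} and Proposition \ref{pr:etaconeE6}): Proposition \ref{pr:etacone-g} plus strict positivity of $(\eta_x,\eta_y,\eta_z)$, exclusion of the common ray through $(n-1,n-2,2)$ via $\eta\leq\mu$, Proposition \ref{pr:musygemiDn} and Proposition \ref{pr:FI-g}, and exclusion of the interior of the regular cone ($\tau_2$ for $n=2k$, $\tau_1$ for $n=2k-1$) using Proposition \ref{pr:GtauDn}. The only caveat is notational: your decomposition of $S_2\EN{\f}$ (and the statement itself) is valid with $\tau_3=\langle(1,0,2),(n-1,n-2,2)\rangle$, as Figure \ref{fi:ENDn}, Proposition \ref{pr:GtauDn} and the use of $\tau_3^0$ in Theorem \ref{nashDn} show, the generator ``$(0,0,1)$'' in the text being a slip carried over from the $\EE_6$ section; this does not affect your argument, since that extremal ray is excluded by positivity in either reading.
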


En vertu de la Proposition \ref{pr:musygemiDn}, on a  $\mu_x\leq n-1$, $\mu_y\leq n-2$ et $\mu_z\leq 2$. Comme 
$\eta_x\geq 1$, $\eta_y\geq 1$ et $\eta_z\geq 1$, on a $\mu_x-\eta_x\leq n-2$, $\mu_y-\eta_y\leq n-3$ et $\mu_z-\eta_z\leq 1$.\\

D'apr\`es la  Proposition \ref{pr:invserBP}, si les s\'eries formelles $\chi$, $\varphi$ et $\psi$ sont inversibles, alors le $K$-wedge admissible $\omega$ centr\'e en $N_{\E}$ se rel\`eve \`a la r\'esolution minimale de $S$.

En vertu de la Proposition \ref{pr:FI-g}, on peut majorer le nombre de facteurs irr\'eductibles compt\'es avec multiplicit\'e des s\'eries formelles $\chi$, $\varphi$ et $\psi$ \`a l'aide des $v$-ordres. Plus  pr\'ecis\'ement,  il existe un vecteur $v\in \QQ^{2}_{>0} $  tel que:\\

\begin{itemize}
   \item[]$\FI(\chi)\leq \degt{\chi_v}= \nu_v\chi =\mu_x-\eta_x\leq n-2$;
\item[] $\FI(\varphi)\leq \degt{\varphi_v}= \nu_v\varphi= \mu_y-\eta_y\leq n-3$;
 \item[]$\FI(\psi)\leq \degt{\psi_v}= \nu_v\psi= \mu_z-\eta_z\leq 1$.\\
\end{itemize}

Sauf mention du contraire, dans toute la suite le vecteur  $v\in \QQ_{>0}^2$ satisfait la propri\'et\'e ci-dessus.
Le r\'esultat suivant est un corollaire de la Proposition \ref{pr:FI-g}
\begin{corollaire}
\label{co:FIDn}
 La s\'erie formelle $\psi$ est irr\'eductible ou inversible.
\end{corollaire}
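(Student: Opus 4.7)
The plan is to deduce the corollary directly from the three bullet-point inequalities displayed just before its statement. Proposition \ref{pr:FI-g} already gives $\FI(\psi) \leq \mu_z - \eta_z$, so the whole question reduces to establishing $\mu_z - \eta_z \leq 1$.

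For the upper bound on $\mu_z$, I would invoke Proposition \ref{pr:musygemiDn}: the vector $(\mu_x,\mu_y,\mu_z)$ lies in $G\tau_1 \cup G\tau_3$ if $n = 2k$, or in $G\tau_2 \cup G\tau_3$ if $n = 2k-1$. Inspecting the explicit generating sets of Proposition \ref{pr:GtauDn}, one sees that every vector occurring in $G\tau_1$, $G\tau_2$, or $G\tau_3$ has third coordinate at most $2$; hence $\mu_z \leq 2$ in either parity case. For the lower bound on $\eta_z$, I would use that $\omega$ is an admissible $K$-wedge centered in $N_\E$, so its generic arc lies in $S^{s}_{\infty}$; this forces $\eta_x, \eta_y, \eta_z \geq 1$, and in particular $\eta_z \geq 1$. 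Combining the two bounds gives $\mu_z - \eta_z \leq 1$, hence $\FI(\psi) \leq 1$.

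To conclude, I would split on the two possible values of $\FI(\psi)$. If $\FI(\psi) = 0$, then $\psi$ admits no irreducible factor in $K[[s,t]]$, so $\psi$ is a unit. If $\FI(\psi) = 1$, then $\psi$ is, up to multiplication by a unit, equal to its unique irreducible factor, hence is itself irreducible. This yields the stated dichotomy. There is no genuine obstacle here: the corollary is essentially a one-line consequence of bounds already assembled, the only mildly substantive point being the observation that the third coordinate is bounded by $2$ on every relevant semigroup generator, which is immediate from the explicit list in Proposition \ref{pr:GtauDn}.
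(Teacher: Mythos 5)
Your argument is correct and coincides with the paper's own route: the corollary is obtained exactly as you say, by combining the bound $\FI(\psi)\leq \nu_v\psi=\mu_z-\eta_z$ from Proposition \ref{pr:FI-g} with $\mu_z\leq 2$ (from Propositions \ref{pr:GtauDn} and \ref{pr:musygemiDn}) and $\eta_z\geq 1$ (admissibility of the wedge), giving $\FI(\psi)\leq 1$ and hence the dichotomy. Nothing further is needed.
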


Maintenant, on donne notre preuve de la bijectivit\'e de l'application de Nash pour les singularit\'es de type $\DD_n$.

 \begin{theoreme} L'application de Nash $\mathcal{N}_S$ associ\'ee \`a $S$ est bijective.
\label{nashDn}
\end{theoreme}
\begin{proof}
 En vertu  de la Proposition \ref{pr:invserBP},  pour montrer que le $K$-wedge admissible $\omega$ se rel\`eve \`a la r\'esolution minimale  de $S$, il suffit de montrer que les s\'eries formelles $\chi$, $\varphi$ et $\psi$ sont inversibles. 
En raisonnant par l'absurde, on  suppose qu'il y a au moins l'une d'elles n'est pas inversible.

On remarque qu'une s\'erie formelle $\phi \in K[[s,t]]$ est inversible dans $K[[s,t]]$ si et seulement si elle est inversible dans $\overline{K}[[s,t]]$, o\`u $\overline{K}$ est la cl\^oture alg\'ebrique de $K$. Dans toute la suite,  on suppose que le corps $K$ est alg\'ebriquement clos.\\

Le $K$-wedge $\omega$ doit satisfaire l'\'equation $x^2+z(y^2+z^{n-2})=0$, d'o\`u la relation suivante:
             
\begin{equation} t^{2\eta_x}\chi^2+t^{2\eta_y+\eta_z}\varphi^2\psi+t^{(n-1)\eta_z}\psi^{n-1}=0 \label{eq:dn}
\end{equation}

En vertu de la Proposition \ref{pr:etaconeDn}, on a les cas suivants:\\

\begin{itemize}
 \item[Cas 1).] le vecteur $(\eta_x,\eta_y,\eta_z)$ appartient \`a l'int\'erieur $\tau_3^0$ du c\^one $\tau_3$; 
\item[Cas 2).] l'entier $n$ est \'egal \`a $ 2k$, o\`u $k$ est un entier sup\'erieur ou \'egal \`a $2$, et le vecteur $(\eta_x,\eta_y,\eta_z)$ appartient \`a l'int\'erieur $\tau_1^0$ du c\^one $\tau_1$; 
\item[Cas 3).] l'entier $n$ est \'egal \`a $ 2k-1$, o\`u $k$ est un entier sup\'erieur ou \'egal \`a $2$, et le vecteur $(\eta_x,\eta_y,\eta_z)$ appartient \`a l'int\'erieur $\tau_2^0$ du c\^one $\tau_2$.\\
\end{itemize}
Dans chaque cas ci-dessus, on va obtenir une contradiction, ce qui ach\`eve la preuve du Th\'eor\`eme.\\
 
Cas 1).  On suppose que $(\eta_x,\eta_y,\eta_z)\in\tau_3^0$.   Dans ce cas, on a  $\eta_z=2$ et $\eta_x=\eta_y+1$. 
Au moyen de la Relation (\ref{eq:dn}), on obtient la relation suivante:
$$\chi^2+\varphi^2\psi=-t^{2(n-2-\eta_y)}\psi^{n-1}.$$
 On remarque que $n-2-\eta_y>0$, car le vecteur $(\eta_x,\eta_y,\eta_z)$ appartient \`a l'int\'erieur du c\^one $\tau_3$.

D'apr\`es les Propositions \ref{pr:GtauDn} et \ref{pr:musygemiDn}, on a $\mu_z\leq 2$. Par cons\'equent, $\mu_z-\eta_z=0$, ce qui implique que la s\'erie formelle $\psi$ est inversible (voir la proposition \ref{pr:FI-g}).\\

Comme $\psi$ est inversible et $K$ alg\'ebriquement clos, il existe une s\'erie formelle inversible  $\psi_0$ tel que 
$\psi=-\psi_0^2$,  d'o\`u 
$$(\chi+\psi_0\varphi)(\chi-\psi_0\varphi)=\mbox{(-1)}^{n}t^{2(n-2-\eta_y)}\psi_0^{2(n-1)}.$$

Par cons\'equent, $t$ divise $\chi$ et $\varphi$ ou $\chi$ et $\varphi$ sont inversibles. Ce sont des contradictions.\\

 Cas 2). On suppose  que $n=2k$, o\`u $k$ est un entier sup\'erieur ou \'egal \`a $2$ et  $(\eta_x,\eta_y,\eta_z)\in \tau_1^0$.\\

Comme $(\eta_x,\eta_y,\eta_z)$ appartient \`a $\tau_1^0$ et $n=2k$, le vecteur $(\eta_x,\eta_y,\eta_z)$ satisfait les relations suivantes:
\begin{center}
 $(k-1)\eta_z=\eta_y$ et $2\eta_x>(2k-1)\eta_z$.
\end{center}
Comme $1\leq \eta_x\leq \mu_x\leq 2k-1$ (voir les Propositions \ref{pr:GtauDn} et \ref{pr:musygemiDn}), on obtient que $\eta_z=1$, $\eta_y=2k-1$ et $\eta_x\geq k$. On a donc $\mu_z=2$. En effet, si $\mu_z=1$, alors $(\mu_x,\mu_y,\mu_z)=(k,k-1,1)$. D'apr\`es la Proposition \ref{pr:FI-g}, les s\'eries formelles $\chi$, $\varphi$ et $\psi$ sont inversibles, ce qui est une contradiction, car on a suppos\'e que parmi ces s\'eries formelles il y en a au moins une qui n'est pas inversible.\\

Au moyen de la Relation (\ref{eq:dn}), on obtient la relation suivante: 

\begin{center} $t^{2\eta_x+1-n}\chi^2+\varphi^2\psi=-\psi^{n-1}.$\end{center}

On remarque que $\psi$ est irr\'eductible car  $\nu_v\psi=\mu_z-\eta_z=1$ (voir la Proposition \ref{pr:FI-g} et le Corollaire \ref{co:FIDn})
Au moyen de la relation ci-dessus, on obtient que $\chi=\chi_0\psi^{k}$ et $\varphi=\varphi_0\psi^{k-1}$, o\`u $\chi_0$ et  $\varphi_0$
sont deux s\'eries formelles qui satisfont la relation suivante:
\begin{center}
 $t^{2\eta_x+1-n}\chi_0^2\psi+\varphi_0^2=-1.$
\end{center}

Comme $\nu_v\psi=1$, on obtient que  $\nu_v\chi=\nu_v\chi_0\psi^k=\nu_v\chi_0+k$ . Comme  $\mu_x\leq 2k-1$ et $\eta_x\geq k$, on a $\nu_v\chi=\mu_x-\eta_x\leq k-1$ (voir la Proposition \ref{pr:FI-g}). Par cons\'equent, on a  $\nu_v\chi_0 \leq -1$, ce qui est une contradiction.\\

La preuve du Cas 3) est analogue \`a celle du Cas 2).\\

Cas 3). On suppose  que $n=2k-1$, o\`u $k$ est un entier sup\'erieur ou \'egal \`a $2$ et  $(\eta_x,\eta_y,\eta_z)\in \tau_2^0$.\\

Comme $(\eta_x,\eta_y,\eta_z)$ appartient \`a $\tau_2^0$ et $n=2k-1$, le vecteur $(\eta_x,\eta_y,\eta_z)$ satisfait les relations suivantes:
\begin{center}
 $(k-1)\eta_z=\eta_x$ et $2\eta_y>(2k-3)\eta_z$.
\end{center}
Comme $1\leq \eta_y\leq \mu_y\leq 2k-3$ (voir les Propositions \ref{pr:GtauDn} et \ref{pr:musygemiDn}), on obtient que $\eta_z=1$, $\eta_x=k-1$ et $\eta_y\geq k-1$. On a donc $\mu_z=2$. En effet, si $\mu_z=1$, alors $(\mu_x,\mu_y,\mu_z)=(k-1,k-1,1)$. D'apr\`es la Proposition \ref{pr:FI-g}, les s\'eries formelles $\chi$, $\varphi$ et $\psi$ sont inversibles, ce qui est une contradiction car on a
 suppos\'e que parmi ces s\'eries formelles il y en a au moins une qui n'est pas inversible.\\

Au moyen de la Relation (\ref{eq:dn}), on obtient la relation suivante: 
$$\chi^2+t^{2\eta_y+2-n}\varphi^2\psi=-\psi^{n-1}.$$

On remarque que $\psi$ est irr\'eductible car  $\nu_v\psi=\mu_z-\eta_z=1$ (voir la Proposition \ref{pr:FI-g} et le Corollaire \ref{co:FIDn})
Au moyen de la relation ci-dessus, on obtient que $\chi=\chi_0\psi^{k-1}$ et $\varphi=\varphi_0\psi^{k-1}$, o\`u $\chi_0$ et  $\varphi_0$
sont deux s\'eries formelles qui satisfont la relation suivante:
\begin{center}
 $\chi_0^2+t^{2\eta_y+2-n}\varphi_0^2\psi=-1$.
 
\end{center}

Comme $\nu_v\psi=1$, on obtient que  $\nu_v\varphi=\nu_v\varphi_0\psi^{k-1}=\nu_v\varphi_0+k-1$ . Comme  $\mu_y\leq 2k-3$ et $\eta_y\geq k-1$, on a $\nu_v\varphi=\mu_y-\eta_y\leq k-2$ (voir la Proposition \ref{pr:FI-g}). Par cons\'equent, on a  $\nu_v\varphi_0 \leq -1$, ce qui est une contradiction.\\

Dans le trois cas pr\'ec\'edent, on a obtenu une contradiction, d'o\`u le th\'eor\`eme.\end{proof}

\bibliographystyle{alpha}
\bibliography{Nash_toulouse}

\end{document}